\documentclass[journal]{IEEEtran}

\IEEEoverridecommandlockouts                              

\usepackage{balance}


\usepackage{graphicx}
\usepackage{mathptmx} 
\usepackage{times} 
\usepackage{amsmath} 
\usepackage{amsthm}
\usepackage{amssymb}  
\usepackage{amsfonts}
\usepackage{newtxmath}

\usepackage{mathtools, cuted}

\usepackage{bm}

\usepackage{enumerate}

\usepackage[utf8]{inputenc}

\usepackage{tikz,tabstackengine,amsmath}
\usetikzlibrary{automata,positioning, arrows.meta}

\usepackage[noadjust]{cite}

\newtheorem{theorem}{Theorem}[section]
\newtheorem{lemma}[theorem]{Lemma}

\newtheorem{corollary}[theorem]{Corollary}

\theoremstyle{remark}

\theoremstyle{definition}
\newtheorem{definition}{Definition}

\title{On the Hybrid Minimum Principle}

\author{Ali Pakniyat, \textit{Member, IEEE}, and Peter E. Caines, \textit{Life Fellow, IEEE}

\thanks{This work is supported by the Natural Sciences and Engineering Research Council of Canada (NSERC) and the Automotive Partnership Canada (APC).}
\thanks{Ali Pakniyat is with the Department of Mechanical Engineering, University of Michigan, Ann Arbor, USA and Peter E. Caines is with the Centre for Intelligent Machines (CIM) and the Department of Electrical and Computer Engineering, McGill University, Montreal, QC, Canada \newline
        {\tt\small pakniyat@umich.edu, peterc@cim.mcgill.ca}}%
}

\begin{document}

\bstctlcite{IEEEexample:BSTcontrol}

\maketitle
\thispagestyle{empty}
\pagestyle{empty}



\begin{abstract}

The Hybrid Minimum Principle (HMP) is established for the optimal control of deterministic hybrid systems with both autonomous and controlled switchings and jumps where state jumps at the switching instants are permitted to be accompanied by changes in the dimension of the state space. 
First order variational analysis is performed via the needle variation methodology and the necessary optimality conditions are established in the form of the HMP. A feature of special interest in this work is the explicit presentations of boundary conditions on the Hamiltonians and the adjoint processes before and after switchings and jumps. In addition to an analytic example, the HMP results are illustrated for the optimal control of an electric vehicle with transmission, where the modelling of the powertrain requires the consideration of both autonomous and controlled switchings accompanied by dimension changes.

\end{abstract}


\begin{IEEEkeywords}
Hybrid systems, Minimum Principle, needle variations, nonlinear control systems, optimal control, Pontryagin Maximum Principle, variational methods.
\end{IEEEkeywords}


\section{Introduction}

\IEEEPARstart{T}{he} Minimum Principle (MP), also called the Maximum Principle in the pioneering work of Pontryagin et al. \cite{Pontryagin}, is a milestone of systems and control theory that led to the emergence of optimal control as a distinct field of research. This principle states that any optimal control along with the optimal state trajectory must solve a two-point boundary value problem in the form of an extended Hamiltonian canonical system, as well as satisfying an extremization condition of the Hamiltonian function. Whether the extreme value is maximum or minimum depends on the sign convention used for the Hamiltonian definition. 

The main objective of this paper is the presentation and proof of the Minimum Principle for hybrid systems, i.e. the generalization of the MP for control systems with both continuous and discrete states and dynamics. It should be remarked that due to the development of hybrid systems theory in different scientific communities  which are motivated by various applications, the domains of definition of hybrid systems do not necessarily intersect in a general class of systems. For instance, in computer science hybrid systems are viewed as finite automata interacting with an analogue environment, and therefore the emphasis is often on the discrete event dynamics
\cite{PuriVaraiya, AlurHenzingerLafferrierePappas, AlurDangIvancic, ClarkeFehnkerHanKroghOuaknineStursbergTheobald, TiwariKhanna, Broucke, HelwaCaines, CoronaGiuaSeatzu}, while in the Control Systems community, the continuous dynamics is more dominant in the discussion. Even in hybrid systems stability theory (see e.g. \cite{GoebelSanfeliceTeel, LiberzonSwitching, LiberzonHespanhaMorse, Hespanha, BranickyTAC, DecarloBranickyPetterssonLennartson, JohanssonRantzer, VanDerSchaftSchumacher}) the considered structures for hybrid control inputs are different from the admissible set of input values considered for optimal control purposes. Moreover, the definitions and the underlying assumptions for the class of hybrid optimal control problems in Hybrid Dynamic Programming (HDP) \cite{BensoussanMenaldi, DharmattiRamaswamy, BarlesDharmattiRamaswamy, BranickyBorkerMitter, ShaikhPECVerification, PECEgerstedtMalhame, ScholligPECEgerstedt, DaSilvaDeSousaPereira, HedlundRantzer} differ from those of the \mbox{Hybrid Minimum Principle (HMP) literature}. 

The formulation of the HMP by Clarke and Vinter \cite{ClarkeVinterMultiprocess, ClarkeVinterOptimal}, referred to by them as ``optimal multiprocesses'', provides a Minimum Principle for hybrid systems of a very general nature in which switching conditions are regarded as constraints in the form of set inclusions and the dynamics of the constituent processes are governed by (possibly nonsmooth) differential inclusions. 
A similar philosophy is followed by Sussmann \cite{SussmannNonSmooth, Sussmann} where a nonsmooth MP is presented for hybrid systems possessing a general class of switching structures. Due to the generality of the considered structures in \cite{ClarkeVinterMultiprocess, ClarkeVinterOptimal, SussmannNonSmooth, Sussmann} degeneracy is not precluded, therefore additional hypotheses (typically of a controllabilty nature) need to be imposed to make the HMP results significantly informative (see e.g. \cite{CainesClarkeLiuVinter} for more discussion). 

An alternative philosophy, followed by Shaikh and Caines \cite{ShaikhPEC}, Garavello and Piccoli \cite{GaravelloPiccoli}, Taringoo and Caines \cite{FarzinPECSIAM}, and Pakniyat and Caines \cite{APPEC2017TAC} is to ensure the validity of the HMP in a non-degenerate form by introducing hypotheses on the dynamics, transitions and switching events. Then by performing first order variational analysis via the needle variation methodology, the necessary optimality conditions are established in the form of the HMP, with the emphasis of theoretical developments on generalization of the class of hybrid systems and on relaxation of regularity assumptions (see e.g. \cite{JafarpourLewisTopology} for a discussion on regulaty requirements in control theory). Moreover, non-degeneracy provided by this approach is advantageous in the development of numerical algorithms (see e.g. \cite{ShaikhPECOptimalityZone, TaringooPEC2011, AxelssonWardiEgerstedtVerriest, BoccadoroWardiEgerstedtVerriest, GonzalezVasudevanKamgarpourSastryBajcsyTomlin, ZhaoMohanVasudevan, ZhuAntsaklis, PassenbergLeiboldStursberg, Cowlagi, MamakoukasMacIverMurphey}). Other, prior, versions of the HMP which appeared in its development within hybrid system theory are to be found in the work of Riedinger and Kratz \cite{RiedingerKratz}, Xu and Antsaklis \cite{XuAntsaklis}, Azhmyakov, Boltyanski and Poznyak \cite{AzhmyakovBoltyanskiPoznyak}, and Dmitruk and Kaganovich \cite{DmitrukKaganovich, DmitrukKaganovich2011, DmitrukKaganovich2011Quadratic}.

In past work of the authors (see \cite{APPEC2017TAC, APPECCDC2014, APPECCDC2013}), a unified general framework for hybrid optimal control problems is presented within which the HMP, HDP, and their mutual relationship are valid. Distinctive aspects in this work are the presence of state dependent switching costs, the consideration of both autonomous and controlled switchings and jumps, and the possibility of state space and control space dimension changes. The latter aspect is of particular importance for systems with hybrid dynamics induced by restrictions of certain degrees of freedom (e.g. single and double support modes in legged locomotion \cite{GrizzleBook} and fixed gear modes and transitioning phases in automotive systems \cite{APPEC2017NAHS, APPEC2ADHS2015} presented in Section \ref{sec:EVexample}). Within this general framework, it is proved that along optimal trajectories of a hybrid system, the adjoint process in the HMP, and the gradient of the value function in HDP are equal almost everywhere (see \cite{APPECCDC2014} for a proof method based on variations over optimal trajectories, and \cite{APPEC2017TAC} for variations over general (i.e. not necessarily optimal) trajectories). Illustrative analytic examples are provided in \cite{APPECIFAC2014, APPECCDC2015, APPEC1ADHS2015}. 

The organisation of the paper is as follows: In \mbox{Section \ref{sec:HybridSystems}} a definition of hybrid systems is presented that covers a general class of nonlinear systems on Euclidean spaces with autonomous and controlled switchings and jumps allowed at the switching states and times. Section \ref{sec:HOCP} presents a general class of hybrid optimal control problems with a large range of running, terminal and switching costs. The regularity assumptions in Sections  \ref{sec:HybridSystems} and \ref{sec:HOCP} are attempted to be minimal and they are imposed primarily to ensure the existence and uniqueness of solutions as well as continuous dependence on initial conditions. Further generalizations such as the lying of the system's vector fields in Riemannian spaces \cite{FarzinPECSIAM,FarzinPEC}, nonsmooth assumptions \cite{Sussmann, SussmannNonSmooth, ClarkeVinterOptimal, ClarkeVinterMultiprocess, BensoussanMenaldi, DharmattiRamaswamy}, state-dependence of the control value sets \cite{GaravelloPiccoli}, and \mbox{stochastic hybrid} systems \cite{APPECCDC2016}, as well as restrictions to certain subclasses, such as those with regional dynamics \cite{PECEgerstedtMalhame, ScholligPECEgerstedt}, and with specified families of jumps \cite{BensoussanMenaldi, BranickyBorkerMitter, DharmattiRamaswamy, BarlesDharmattiRamaswamy}, become possible through variations and extensions of the framework presented here. 

The main result which is the statement and the proof of the Hybrid Minimum Principle is presented in Section \ref{sec:HMP}. Distinctive aspects of this work are the explicit presentation of the boundary conditions on the Hamiltonians and adjoint processes (in contrast to their implicit expressions in \cite{ClarkeVinterMultiprocess, ClarkeVinterOptimal, SussmannNonSmooth, Sussmann, GaravelloPiccoli}), the relaxation of the regularity requirements (relative to e.g. \cite{ShaikhPEC, FarzinPECSIAM}) and the presence of both autonomous and controlled switchings and jumps with switching costs and the possibility of state space dimension change (where only subsets of these features have been considered for the presentation of other versions of the HMP). Discussion on generalization of the HMP for time-varying vector fields, costs, and switching manifolds is provided in Section \ref{sec:HMPTV}. 

To illustrate the results, an analytic example is provided in Section \ref{sec:AnalyticExample} and in Section \ref{sec:EVexample} the hybrid model of an electric vehicle equipped with a dual-stage planetary transmission (presented in \cite{MechMachThry2015, USPatent2017}) is studied (see also \cite{APPEC2017NAHS, APPEC1ADHS2015}).
This example highlights some of the key features of the hybrid systems framework presented in this work.
In particular, the modelling of the powertrain requires the consideration of both autonomous and controlled state jumps, some of which are accompanied by changes in the dimension of the state space due to the changing degree of freedom during the transition period.  Furthermore, the corresponding hybrid automaton diagram for the full system (presented in Figure \ref{fig:HybridAutomata}) exhibits a majority of the permitted behaviour of the completely general automaton in the definition of hybrid systems in Section \ref{sec:HybridSystems} and \ref{sec:HOCP}. Moreover, there is a genuine restriction by the automaton imposed on discrete transitions expressed in (i.e. corresponding to) those in the state transition structure displayed in Figure \ref{fig:HybridAutomata}.

\section{Hybrid Systems}
\label{sec:HybridSystems}
\begin{definition}
A (deterministic) \textit{hybrid system (structure)} $\mathbb{H}$ is a septuple
\begin{equation}
\mathbb{H}=\left\{ H,I,\Gamma,A,F,\Xi,\mathcal{M}\right\} \label{Hybrid System}
\end{equation}
where the symbols in the expression and their governing assumptions are defined as below.
\newline

\noindent\textbf{A0:} $H:=\coprod_{q\in Q}\mathbb{R}^{n_{q}}$ is called the \textit{(hybrid) state space}
of the hybrid system $\mathbb{H}$, where $\coprod$ denotes disjoint union, i.e. $\coprod_{q\in Q}\mathbb{R}^{n_{q}} = \bigcup_{q\in Q}\big\{ (q,x) : x \in \mathbb{R}^{n_{q}} \big\}$, where

$Q=\left\{ 1,2,...,\left|Q\right|\right\} \equiv\left\{ q_{1},q_{2},...,q_{\left|Q\right|}\right\} ,\left|Q\right|<\infty$, is a finite set of \textit{discrete states (components)}, and

$\left\{ \mathbb{R}^{n_{q}}\right\} _{q\in Q}$ is a family of finite dimensional continuous valued state spaces, where $n_{q}\leq n<\infty$ for all $q\in Q$.

$I:=\Sigma\times U$ is the set of system input values, where

$\Sigma$ with $\left|\Sigma\right|<\infty$ is the set of discrete state transition and continuous state jump events extended with the identity element,

$U=\left\{ U_{q}\right\} _{q\in Q}$ is the set of \textit{admissible input control values}, where each $U_q \subset \mathbb{R}^{m_q}$ is a compact set in $\mathbb{R}^{m_q}$.

The set of admissible (continuous) control inputs $\mathcal{U}\left(U\right):=L_{\infty}\left(\left[t_{0},T_{*}\right),U\right)$, is defined to be the set of all measurable functions that are bounded up to a set of measure zero on $\left[t_{0},T_{*}\right),T_{*}<\infty$. The boundedness property necessarily holds since admissible inputs take values in the compact set $U$.

$\Gamma:H\times\Sigma\rightarrow H$ is a time independent (partially defined) \textit{discrete state transition map}.

$\Xi:H\times\Sigma\rightarrow H$ is a time independent (partially defined) \textit{continuous state jump transition map}. For all $\xi \in\Xi$, the functions $\xi_{\sigma} \equiv \xi(\cdot , \sigma) : \mathbb{R}^{n_q} \rightarrow  \mathbb{R}^{n_p}$, $p\in A\left(q,\sigma\right)$ are assumed to be continuously differentiable in the
continuous state $x \in \mathbb{R}^{n_q}$. 

$A:Q\times\Sigma\rightarrow Q$ denotes both a deterministic finite automaton and the automaton's associated transition function on the state space $Q$ and event set $\Sigma$, such that for a discrete state $q\in Q$ only the discrete controlled and uncontrolled transitions into the $q$-dependent subset $\left\{ A\left(q,\sigma\right),\sigma\in\Sigma\right\} \subset Q$ occur under the projection of $\Gamma$ on its $Q$ components: $\Gamma:Q\times\mathbb{R}^{n}\times\Sigma\rightarrow H|_{Q}$. In other words, $\Gamma$ can only make a discrete state transition in a hybrid state $\left(q,x\right)$ if the automaton $A$ can make the corresponding transition in $q$.

$F$ is an indexed collection of \textit{vector fields} $\left\{ f_{q}\right\} _{q\in Q}$ such that there exist $k_{f_q}\geq1$ for which $f_{q}\in C^{k_{f_q}} \left(\mathbb{R}^{n_q}\times U_q\rightarrow\mathbb{R}^{n_q}\right)$ satisfies a joint uniform Lipschitz condition, i.e., there exists $L_{f}<\infty$ such that $\left\Vert f_{q}\left(x_{1},u_{1}\right)-f_{q}\left(x_{2},u_{2}\right)\right\Vert \leq L_{f}\left(\left\Vert x_{1}-x_{2}\right\Vert + \left\Vert u_{1}-u_{2}\right\Vert \right)$ for all $x, x_{1},x_{2}\in\mathbb{R}^{n_q}$, $u, u_1, u_2\in U_q$, $q\in Q$. 

$\mathcal{M}=\left\{ m_{\alpha}:\alpha\in Q\times Q,\right\}$ denotes a collection of \textit{switching manifolds} such that, for any ordered pair $\alpha \equiv \left(\alpha_1,\alpha_2\right) =\left(q,r\right)$, $m_{\alpha}$ is a smooth, i.e. $C^{\infty}$ codimension $k$ sub-manifold of $\mathbb{R}^{n_q}$, $k\in\left\{1, \cdots , n_q \right\} $, described locally by $m_{\alpha}=\left\{ x: m_{\alpha}\left(x\right)=0 \right\} $, and possibly with boundary $\partial m_{\alpha}$. It is assumed that $m_{\alpha}\cap m_{\beta}=\emptyset$, whenever $\alpha_1 = \beta_1$ but $\alpha_2 \neq \beta_2$, for all $\alpha,\beta\in Q\times Q$.
\hfill $\square$
\end{definition}
We note that the case where $m_{\alpha}$ is identified with its reverse ordered version $m_{\bar{\alpha}}$ giving $m_{\alpha}=m_{\bar{\alpha}}$, is not ruled out by this definition, even in the non-trivial case $m_{p,p}$ where $\alpha_1 = \alpha_2 = p$. The former case corresponds to the common situation where the switching of vector fields at the passage of the continuous trajectory in one direction through a switching manifold is reversed if a reverse passage is performed by the continuous trajectory, while the latter case corresponds to the standard example of the bouncing ball. 


Switching manifolds will function in such a way that whenever a trajectory governed by the controlled vector field meets the switching manifold transversally there is an autonomous switching to another controlled vector field or there is a jump transition in the continuous state component, or both. A transversal arrival on a switching manifold $m_{q,r}$, at state $x_q \in m_{q,r}=\left\{ x \in \mathbb{R}^{n_{q}} : m_{q,r}\left(x\right)=0 \right\}$ occurs whenever
\begin{equation}
{\nabla m_{q,r} \left(x_q\right)}^T f_q \left(x_q,u_q\right) \neq 0 ,
\label{TransversalityOfTrajectoriesToManifolds}
\end{equation}
for $u_q\in U_q$, and $q,r \in Q$.  It is assumed that:


\noindent\textbf{A1:} The initial state $h_{0}:=\left(q_{0},x\left(t_{0}\right)\right)\in H$ is such that $m_{q_{0},q_{j}}\left(x_{0}\right)\neq0$, for all $q_{j}\in Q$. 
\hfill $\square$

\begin{definition}
\label{def:HybridInputProcess}
A \textit{hybrid input process} is a pair $I_{L}\equiv I_{L}^{\left[t_{0},t_{f}\right)}:= \left(S_L,u\right)$ defined on a half open interval $\left[t_{0},t_{f}\right)$, $t_{f}<\infty$, where $u\in{\cal U}$ and $S_L = \big(\left(t_{0},\sigma_{0}\right), \left(t_{1},\sigma_{1}\right), \cdots,$ $\left(t_{L},\sigma_{L}\right) \big)$, $L<\infty$, is a finite \textit{hybrid sequence of switching events} consisting of a strictly increasing sequence of times $\tau_L := \left\{ t_{0},t_{1},t_{2},\ldots,t_{L}\right\}$ and a \textit{discrete event sequence} $\sigma$ with $\sigma_0 = id$ and $\sigma_i \in \Sigma$, $i \in \left\{1,2,\cdots,L\right\}$.
\hfill $\square$
\end{definition}

\begin{definition}
\label{def:HybridStateProcess}
A \textit{hybrid state process} (or \textit{trajectory}) is a triple $\left(\tau_L,q,x\right)$ consisting of the sequence of switching times $\tau_L = \left\{ t_{0},t_{1},\ldots,t_{L}\right\}$, $L<\infty$, the associated sequence of discrete states $q=\left\{ q_{0},q_{1},\ldots,q_{L}\right\}$, and the sequence $x\left(\cdot\right)=\left\{ x_{q_{0}}\left(\cdot\right),x_{q_{1}}\left(\cdot\right),\ldots,x_{q_{L}}\left(\cdot\right)\right\}$ of piece-wise differentiable functions $x_{q_{i}}\left(\cdot\right):\left[t_{i},t_{i+1}\right)\rightarrow\mathbb{R}^{n}$.\hfill $\square$
\end{definition}

\begin{definition}
\label{def:InputStateTrajectory}
The \textit{input-state trajectory} for the hybrid system $\mathbb{H}$ satisfying A0 and A1 is a hybrid input $I_L = \left(S_L,u\right)$ together with its corresponding hybrid state trajectory $\left(\tau_L,q,x\right)$ defined over $\left[t_{0},t_{f}\right),t_{f}<\infty$, such that it satisfies:

\begin{enumerate}[(i)]
\item \textit{Continuous State Dynamics:} The continuous state component $x\left(\cdot\right) =\big\{ x_{q_{0}}\left(\cdot\right), $ $ x_{q_{1}}\left(\cdot\right),\ldots,x_{q_{L}}\left(\cdot\right)\big\}$ is a piecewise continuous function which is almost everywhere differentiable and on each time segment specified by $\tau_L$ satisfies the dynamics equation
\begin{equation}
{\dot{x}_{q_{i}}\left(t\right) = f_{q_{i}}\left(x_{q_{i}}\left(t\right),u\left(t\right)\right)}, \hspace{1 cm} {a.e.\; t\in\left[t_{i},t_{i+1}\right)},
\end{equation}
with the initial conditions
\begin{align}
x_{q_{0}}\left(t_{0}\right) &=x_{0}
\\
x_{q_{i}}\left(t_{i}\right) &= \xi_{\sigma_{i}}\left(x_{q_{i-1}}\left(t_{i}-\right)\right) := \xi_{\sigma_{i}}\left(\lim_{t\uparrow t_{i}}x_{q_{i-1}}\left(t\right)\right)
\end{align}
for $\left(t_{i},\sigma_{i}\right) \in S_L$. In other words, $x\left(\cdot\right)=\left\{ x_{q_{0}}\left(\cdot\right),x_{q_{1}}\left(\cdot\right),\ldots,x_{q_{L}}\left(\cdot\right)\right\}$ is a piecewise continuous function which is almost everywhere differentiable and is such that each $x_{q_{i}}\left(\cdot\right)$ satisfies
\begin{equation}
x_{q_{i}}\left(t\right)=x_{q_{i}}\left(t_{i}\right)+\int_{t_{i}}^{t}f_{q_{i}}\left(x_{q_{i}}\left(s\right),u\left(s\right)\right)ds
\end{equation}
for $t \in \left[t_{i},t_{i+1}\right)$.

\item \textit{Autonomous Discrete Transition Dynamics:} An autonomous (uncontrolled) discrete state transition from $q_{i-1}$ to $q_i$ together with a continuous state jump $\xi_{\sigma_i}$ occurs at the \textit{autonomous switching time} $t_i$ if $x_{q_{i-1}}\left(t_{i}-\right):=\lim_{t\uparrow t_{i}}x_{q_{i-1}}\left(t\right)$ satisfies a switching manifold condition of the form
\begin{equation}
m_{q_{i-1}q_i}\left(x_{q_{i-1}}\left(t_{i}-\right)\right) = 0
\end{equation}
for $q_i\in Q$, where $m_{q_{i-1}q_i}\left(x\right) = 0$ defines a $\left(q_{i-1},q_i\right)$ switching manifold and it is not the case that either $\left(i\right)$ $x\left(t_{i}-\right)\in \partial m_{q_{i-1}q_i}$ or $\left(ii\right)$ $f_{q_{i-1}}\left(x\left(t_{i}-\right),u\left(t_{i}-\right)\right) \perp \nabla m_{q_{i-1}q_i}\left(x\left(t_{i}-\right)\right)$, i.e. $t_i$ is not a manifold termination instant (see \cite{PECHybridNotes}). With the assumptions A0 and A1 in force, such a transition is well defined and labels the event $\sigma_{q_{i-1}q_i} \in \Sigma$, that corresponds to the hybrid state transition
\begin{multline}
h\left(t_{i}\right) \equiv \left(q_{i},x_{q_{i}}\left(t_{i}\right)\right) 
\\
= \left(\Gamma  \left(q_{i-1}, x_{q_{i-1}}\left(t_{i}-\right) ,\sigma_{q_{i-1}q_i}\right),\xi_{\sigma_{q_{i-1}q_i}}  \left(x_{q_{i-1}}\left(t_{i}-\right)\right)\right)
\end{multline}

\item \textit{Controlled Discrete Transition Dynamics:} A controlled discrete state transition together with a controlled continuous state jump $\xi_{\sigma_i}$ occurs at the \textit{controlled discrete event time} $t_i$ if $t_i$ is not an autonomous discrete event time and if there exists a controlled discrete input event $\sigma_{q_{i-1}q_i} \in \Sigma$ for which
\begin{multline}
h\left(t_{i}\right) \equiv \left(q_{i},x_{q_{i}}\left(t_{i}\right)\right) 
\\
= \left(\Gamma  \left(q_{i-1}, x_{q_{i-1}}\left(t_{i}-\right) ,\sigma_{q_{i-1}q_i}\right),\xi_{\sigma_{q_{i-1}q_i}}  \left(x_{q_{i-1}}\left(t_{i}-\right)\right)\right)
\end{multline}
with $\left(t_{i},\sigma_{q_{i-1}q_{i}}\right) \in S_L$ and $q_{i}\in A\left(q_{i-1}\right)$.
\hfill $\square$
\end{enumerate}
\end{definition}

\textbf{A2:} For a specified sequence of discrete states $\left\{q_i\right\}_{i=0}^{L}$, the class of input-state trajectories is non-empty. In other words, there exist $S_L = \big(\left(t_{0},\sigma_{0}\right), \left(t_{1},\sigma_{1}\right), \cdots, \left(t_{L},\sigma_{L}\right) \big) \equiv \big(\left(t_{0},q_{0}\right), \left(t_{1},q_{1}\right), \cdots, \left(t_{L},q_{L}\right) \big)$ and $u_{q_i} \in L_{\infty}\left(\left[t_{i},t_{i+1}\right), U_{q_i}\right)$ that together with its corresponding hybrid state process form an input-state trajectory in Definition \ref{def:InputStateTrajectory}.
\hfill $\square$

\begin{theorem}
\label{theorem:ExistenceUniqueness}
{{\cite{PECHybridNotes}}} 
A hybrid system $\mathbb{H}$ with an initial hybrid state $\left(q_{0},x_{0}\right)$ satisfying assumptions A0 and A1 possesses a unique hybrid input-state trajectory on $\left[t_{0},T_{**}\right)$, where $T_{**}$ is the least of
\begin{enumerate}[(i)]
\item $T_{*} \leq \infty $, where $\left[t_{0},T_{*}\right)$ is the temporal domain of the definition of the hybrid system,
\item a manifold termination instant $T_{*}$ of the trajectory $h\left(t\right) = h\left(t,\left(q_0,x_0\right),\left(S_L,u\right) \right)$, $t\geq t_0$, at which either $x\left(T_{*}-\right) \in \partial m_{q\left(T_*-\right)q\left(T_*\right)}$ or $f_{q\left(T_*-\right)}\left(x\left(T_*-\right),u\left(T_*-\right)\right) \perp \nabla m_{q\left(T_*-\right)q\left(T_*\right)}\left(x\left(T_{*}-\right)\right)$.
\hfill $\square$
\end{enumerate}
\end{theorem}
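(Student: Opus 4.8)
The plan is to construct the hybrid input-state trajectory inductively, one switching segment at a time, by concatenating classical Carath\'eodory solutions of ordinary differential equations and by checking that at each stage the next switching time and the post-jump hybrid state are uniquely determined.

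First I would analyse a single segment. Suppose the trajectory has been built up to a switching time $t_i$ with a well-defined post-jump hybrid state $\left(q_i,x_{q_i}\left(t_i\right)\right)$, this being the given $\left(q_0,x_0\right)$ when $i=0$. On $\left[t_i,\cdot\right)$ the continuous component must satisfy $\dot{x}=f_{q_i}\left(x,u\left(t\right)\right)$ with $u\in L_\infty\left(\left[t_i,T_*\right),U_{q_i}\right)$. Since $u$ is measurable and essentially bounded and $f_{q_i}$ is continuous and obeys the joint uniform Lipschitz condition of A0, the map $t\mapsto f_{q_i}\left(x,u\left(t\right)\right)$ is measurable for each fixed $x$ and globally Lipschitz in $x$ uniformly in $t$. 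Hence the Carath\'eodory existence theorem, together with Gronwall's inequality---which rules out finite-time escape because the Lipschitz bound forces at most linear growth---yields a unique absolutely continuous solution $x_{q_i}\left(\cdot\right)$ on all of $\left[t_i,T_*\right)$, or up to the first instant at which it meets a switching manifold.

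Next I would identify the switching time $t_{i+1}$. If a controlled event is scheduled in $S_L$, then $t_{i+1}$ and $\sigma_{i+1}$ are prescribed, $q_{i+1}\in A\left(q_i\right)$, and the post-jump state $\left(\Gamma\left(q_i,x_{q_i}\left(t_{i+1}-\right),\sigma_{i+1}\right),\xi_{\sigma_{i+1}}\left(x_{q_i}\left(t_{i+1}-\right)\right)\right)$ is well defined since $\xi_{\sigma_{i+1}}$ is $C^1$. Otherwise $t_{i+1}$ is the first time $x_{q_i}\left(\cdot\right)$ meets $\bigcup_{r\in Q}m_{q_i,r}$; this infimum is attained because $x_{q_i}\left(\cdot\right)$ is continuous and each $m_{q_i,r}$ is closed, and it is strictly larger than $t_i$ by A1 when $i=0$, and by the analogous property of the post-jump state when $i\ge1$. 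The disjointness hypothesis $m_\alpha\cap m_\beta=\emptyset$ whenever $\alpha_1=\beta_1$ but $\alpha_2\neq\beta_2$ makes the successor state $q_{i+1}$, hence the labelled event, unambiguous, and Definition \ref{def:InputStateTrajectory}(ii) then fixes the discrete transition and the jump. If instead $x\left(t_{i+1}-\right)\in\partial m_{q_i,q_{i+1}}$ or $f_{q_i}\left(x\left(t_{i+1}-\right),u\left(t_{i+1}-\right)\right)\perp\nabla m_{q_i,q_{i+1}}\left(x\left(t_{i+1}-\right)\right)$, then by definition $t_{i+1}$ is a manifold termination instant, and this is exactly the instant named in part (ii) of the statement.

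Finally I would close the induction. By Definition \ref{def:HybridInputProcess} a hybrid input process carries only finitely many switching events ($L<\infty$), so the construction terminates after finitely many segments; their concatenation is a hybrid state process on $\left[t_0,T_{**}\right)$ with $T_{**}$ the least of $T_*$ and the first manifold termination instant encountered, as claimed. Uniqueness propagates segment by segment: the Carath\'eodory solution on each interval is unique given its initial value, the first-hitting time of the manifold union is determined by that solution, and $\Gamma$ and $\xi$ are single-valued, so induction yields uniqueness of the entire input-state trajectory. I expect the only genuinely delicate point to be the well-posedness of an autonomous switching time---verifying that the first-hitting infimum is attained and that, up to the first manifold termination instant, the arrival is transversal so that the successor discrete state and the continuation of the trajectory are unambiguously defined; granting this, the per-segment ODE theory and the finiteness of $L$ render the remaining steps routine.
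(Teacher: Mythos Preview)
The paper does not prove this theorem. It is stated with a citation to \cite{PECHybridNotes} and used as a black box; no argument for it appears anywhere in the manuscript. Consequently there is no ``paper's own proof'' against which to compare your proposal.

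That said, your sketch is the natural route and is essentially what the cited reference does: per-segment Carath\'eodory existence and uniqueness from the Lipschitz hypothesis in A0, first-hitting-time arguments to locate autonomous switching instants, the disjointness clause in A0 to make the successor discrete state unambiguous, and $C^1$ jump maps to propagate the state across switches. Your identification of the transversality issue as the only delicate point is accurate: this is precisely why case~(ii) in the statement singles out manifold termination instants, and why Definition~\ref{def:InputStateTrajectory}(ii) explicitly excludes boundary and tangential arrivals from counting as autonomous transitions. One small caution: your closing appeal to $L<\infty$ in Definition~\ref{def:HybridInputProcess} to rule out infinitely many segments is legitimate within this framework because the hybrid input process already packages the full switching sequence (autonomous events included) as finite data, but be aware that this is a modelling assumption rather than something you have derived; the paper itself remarks immediately after the theorem that Zeno accumulation is excluded only once A2 is additionally imposed.
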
 
We note that Zeno times, i.e. accumulation points of discrete transition times, are ruled out by A2. 

\begin{lemma}{{\cite{APPEC2017TAC}}} 
State processes of a hybrid system satisfying Assumptions A0-A2 are continuously dependent on their initial conditions. In other words, for a given $\left\{q_i\right\}_{i=0}^{L}$ and an initial continuous state $x_0 \in \mathbb{R}^{n_{q_0}}$, there exist a neighbourhood $N\left(x_0\right)$ and a constant $0<K<\infty$ such that
\begin{equation}
\left\Vert x\left(t_{f};s,x_{s}\right) - x\left(t_{f};t_0,x_{0}\right) \right\Vert \leq K\left(\left\Vert x_{s}-x_{0}\right\Vert ^{2}+\left|s-t_0\right|^{2}\right)^{\frac{1}{2}},
\end{equation} for $s \geq t_0$ and $x_s \in N\left(x_0\right)$.
\hfill $\square$
\end{lemma}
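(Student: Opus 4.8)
The plan is to establish, for each of the $L+1$ continuous segments of the trajectory, a linear (Lipschitz) estimate bounding the combined perturbation (the continuous-state discrepancy $\|x^{\epsilon}_{q_i}(\cdot)-x_{q_i}(\cdot)\|$ together with the switching-time discrepancy $|t_i^{\epsilon}-t_i|$) at the end of segment $i$ by a constant $C_i$ times the same quantity at the start of segment $i$, and then to compose these $L+1$ bounds. Here $x(\cdot):=x(\cdot;t_0,x_0)$ is the nominal trajectory, $x^{\epsilon}(\cdot):=x(\cdot;s,x_s)$ the perturbed one (which, by hypothesis, shares the discrete sequence $\{q_i\}_{i=0}^{L}$ and the control $u$), and $\{t_i\}$, $\{t_i^{\epsilon}\}$ (with $t_0^{\epsilon}:=s$) are their switching instants. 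On each segment both continuous components obey $\dot x=f_{q_i}(x,u)$ with the same control, and since $f_{q_i}$ satisfies the joint uniform Lipschitz condition with constant $L_f$ and is bounded, say by $M$, on the bounded tube traced near the nominal trajectory, comparing the integral equations and invoking Gronwall's inequality gives $\|x^{\epsilon}_{q_i}(t)-x_{q_i}(t)\|\le e^{L_f(t-t_i)}\big(\|x^{\epsilon}_{q_i}(t_i^{\epsilon})-x_{q_i}(t_i)\|+M|t_i^{\epsilon}-t_i|\big)$; on the first segment the discrepancy $|t_0^{\epsilon}-t_0|=|s-t_0|$ enters here, which is how the $|s-t_0|$ contribution to the claimed bound arises.

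The substantive step is the passage through the switch at $t_{i+1}$. For a controlled event, $t_{i+1}^{\epsilon}=t_{i+1}$ is prescribed and there is nothing to prove for the time. For an autonomous event, Assumption A1 and the exclusion of manifold-termination instants in Definition \ref{def:InputStateTrajectory} ensure the nominal crossing is transversal and interior, i.e. $\nabla m_{q_{i}q_{i+1}}(x_{q_i}(t_{i+1}-))^{T}f_{q_i}(x_{q_i}(t_{i+1}-),u(t_{i+1}-))\neq 0$; by continuity this persists for $(x_s,s)$ in a small enough neighbourhood, so the scalar function $t\mapsto m_{q_{i}q_{i+1}}(x^{\epsilon}_{q_i}(t))$ is strictly monotone on a fixed neighbourhood of $t_{i+1}$, and solving $m_{q_{i}q_{i+1}}(x^{\epsilon}_{q_i}(t))=0$ for $t$ (an implicit-function / monotonicity argument, which is where transversality is indispensable) yields a unique nearby crossing time $t_{i+1}^{\epsilon}$ depending Lipschitz-continuously on the perturbed segment data: $|t_{i+1}^{\epsilon}-t_{i+1}|\le C_i'\big(\|x^{\epsilon}_{q_i}(t_i^{\epsilon})-x_{q_i}(t_i)\|+|t_i^{\epsilon}-t_i|\big)$. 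Finally, since $\xi_{\sigma_{i+1}}$ is $C^{1}$, hence locally Lipschitz on compacts, writing $x^{\epsilon}_{q_{i+1}}(t_{i+1}^{\epsilon})=\xi_{\sigma_{i+1}}(x^{\epsilon}_{q_i}(t_{i+1}^{\epsilon}-))$ and $x_{q_{i+1}}(t_{i+1})=\xi_{\sigma_{i+1}}(x_{q_i}(t_{i+1}-))$, and bounding $\|x^{\epsilon}_{q_i}(t_{i+1}^{\epsilon}-)-x_{q_i}(t_{i+1}-)\|$ by the segment Gronwall estimate together with the $M|t_{i+1}^{\epsilon}-t_{i+1}|$ term needed to cover the mismatched integration interval, one obtains a finite $C_i$ with both $\|x^{\epsilon}_{q_{i+1}}(t_{i+1})-x_{q_{i+1}}(t_{i+1})\|$ and $|t_{i+1}^{\epsilon}-t_{i+1}|$ bounded by $C_i\big(\|x^{\epsilon}_{q_i}(t_i^{\epsilon})-x_{q_i}(t_i)\|+|t_i^{\epsilon}-t_i|\big)$.

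Iterating this recursion from $i=0$ (base perturbation $\|x_s-x_0\|+|s-t_0|$) through $i=L$ and then running the final Gronwall estimate up to $t_f$ gives $\|x^{\epsilon}(t_f)-x(t_f)\|\le K_0\big(\|x_s-x_0\|+|s-t_0|\big)$, with $K_0$ finite precisely because $L<\infty$ (it is a finite product of the $C_i$, $C_i'$ and $e^{L_f(t_{i+1}-t_i)}$, all controlled by $L_f$, $M$, and $T_{**}$ from Theorem \ref{theorem:ExistenceUniqueness}); setting $K:=\sqrt{2}\,K_0$ and using $\|x_s-x_0\|+|s-t_0|\le\sqrt{2}\,(\|x_s-x_0\|^{2}+|s-t_0|^{2})^{1/2}$ yields the stated bound, valid on a neighbourhood $N(x_0)$ (and for $s$ near $t_0$) on which the finitely many local monotonicity/Lipschitz conditions hold, the perturbed trajectory keeps the prescribed discrete sequence, and $t_f$ lies in a common segment of both trajectories so that the left-hand side is well defined even when the state-space dimension changes at switches. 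The main obstacle is the Lipschitz dependence of the autonomous switching times on the perturbed state; this is exactly where transversality and the no-termination hypothesis are essential, and once it is in hand the remainder is Gronwall bookkeeping and the chain rule through the $C^{1}$ jump maps.
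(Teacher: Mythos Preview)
The paper does not actually prove this lemma; it is quoted from \cite{APPEC2017TAC} and stated without argument, so there is no in-paper proof to compare against. Your proposal is the standard and correct route for such a continuous-dependence result in the hybrid setting: Gronwall on each continuous segment, an implicit-function/transversality argument to control the perturbation of autonomous switching times, local Lipschitz continuity of the $C^{1}$ jump maps to propagate the estimate across switches, and finally a finite composition over the $L+1$ segments. You have also correctly identified the only genuinely non-routine step---the Lipschitz dependence of $t_{i+1}^{\epsilon}$ on the incoming data---and the hypothesis (transversality, non-termination) that makes it work. The closing remark about choosing $N(x_0)$ (and $s$ near $t_0$) small enough that the perturbed trajectory retains the prescribed discrete sequence and that $t_f$ lies in a common final segment is exactly the caveat needed for the statement to be meaningful when dimensions change, and it matches the spirit of A2. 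In short, your argument is sound and is precisely what one would expect a full proof (as in the cited reference) to contain.
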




\section{Hybrid Optimal Control Problems}
\label{sec:HOCP}
\noindent\newline
\textbf{A3:} Let $\left\{ l_{q}\right\} _{q\in Q},l_{q}\in C^{n_{l}}\left(\mathbb{R}^{n}\times U\rightarrow\mathbb{R}_{+}\right),n_{l}\geq1$,
be a family of cost functions with $n_l = 2$ unless otherwise stated; $\left\{ c_{\sigma}\right\} _{\sigma\in\Sigma}\in C^{n_{c}}\left(\mathbb{R}^{n}\times\Sigma\rightarrow\mathbb{R}_{+}\right),n_{c}\geq1$,
be a family of switching cost functions; and $g\in C^{n_{g}}\left(\mathbb{R}^{n}\rightarrow\mathbb{R}_{+}\right),n_{g}\geq1$,
be a terminal cost function satisfying the following assumptions:

\begin{enumerate}[(i)]
\item There exists $K_{l}<\infty$ and $1\leq\gamma_{l}<\infty$ such that
$\left|l_{q}\left(x,u\right)\right|\leq K_{l}\left(1+\left\Vert x\right\Vert ^{\gamma_{l}}\right)$ and $\left|l_{q}\left(x_{1},u_{1}\right)-l_{q}\left(x_{2},u_{2}\right)\right|\leq K_{l}\left(\left\Vert x_{1}-x_{2}\right\Vert +\left\Vert u_{1}-u_{2}\right\Vert \right)$, for all
$x\in\mathbb{R}^{n},u\in U,q\in Q$.

\item There exists $K_{c}<\infty$ and $1\leq\gamma_{c}<\infty$ such that
$\left|c_{\sigma}\left(x\right)\right|\leq K_{c}\left(1+\left\Vert x\right\Vert ^{\gamma_{c}}\right)$,
$x\in\mathbb{R}^{n},\sigma\in\Sigma$.

\item There exists $K_{g}<\infty$ and $1\leq\gamma_{g}<\infty$ such that
$\left|g\left(x\right)\right|\leq K_{g}\left(1+\left\Vert x\right\Vert ^{\gamma_{g}}\right)$,
$x\in\mathbb{R}^{n}$.
\hfill $\square$
\end{enumerate}

Consider the initial time $t_{0}$, final time $t_{f}<\infty$, and initial
hybrid state $h_{0}=\left(q_{0},x_{0}\right)$. With the number
of switchings $L$ held fixed, the set of all hybrid input trajectories  in Definition \ref{def:HybridInputProcess} with exactly $L$ switchings is denoted by $\bm{I_{L}}$, and for all $I_{L} :=\left(S_{L},u\right) \in \bm{I_{L}}$ the hybrid switching sequences take the form
$S_{L}= \left\{ \left(t_{0},id\right),\left(t_{1},\sigma_{q_{0}q_{1}}\right),  \ldots, \left(t_{L},\sigma_{q_{L-1}q_{L}}\right)\right\}
\equiv\left\{ \left(t_{0},q_{0}\right),\left(t_{1},q_{1}\right),\ldots,\left(t_{L},q_{L}\right)\right\}$ and the corresponding continuous control inputs are of the form $u\in\mathcal{U} = \bigcup_{i=0}^{L} L_{\infty}\left(\left[t_i,t_{i+1}\right),U\right)$, where $t_{L+1}=t_f$.

Let $I_{L}$ be a hybrid input trajectory that by Theorem \ref{theorem:ExistenceUniqueness} results in a unique hybrid state process. Then hybrid performance functions for the corresponding hybrid input-state trajectory are defined as
\begin{multline}
J\left(t_{0},t_{f},h_{0},L;I_{L}\right):=
\sum_{i=0}^{L}\int_{t_{i}}^{t_{i+1}}l_{q_{i}}\left(x_{q_{i}}\left(s\right),u\left(s\right)\right)ds
\\
 +\sum_{j=1}^{L}c_{\sigma_{q_{j-1}q_{j}}}\left(t_{j},x_{q_{j-1}}\left(t_{j}-\right)\right)+g\left(x_{q_{L}}\left(t_{f}\right)\right)
\label{Hybrid Cost}
\end{multline}

\subsection{Bolza Hybrid Optimal Control Problem}
\label{subsec:BHOCP}
\begin{definition}
\label{def:BHOCP}
The {Bolza Hybrid Optimal Control Problem} (BHOCP) is defined as the infimization of the hybrid cost \eqref{Hybrid Cost} over the family of hybrid input trajectories $\bm{I_{L}}$, i.e.
\begin{equation}
J^{o}\left(t_{0},t_{f},h_{0},L\right)=\inf_{I_{L} \in \bm{I_{L}}}J\left(t_{0},t_{f},h_{0},L;I_{L}\right)\label{HOCP}
\end{equation}
\hfill $\square$
\end{definition}

\vspace{-7pt}

\subsection{Mayer Hybrid Optimal Control Problem}
\label{subsec:MHOCP}
\begin{definition}
\label{def:MHOCP}
The {Mayer Hybrid Optimal Control Problem} (MHOCP) is defined as a special case of the BHOCP where $l_q\left(x,u\right) = 0$ for all $q \in Q$, and $c_{\sigma}\left(x\left(t_{j}-\right)\right)$ $ = 0$ for all $\sigma \in \Sigma$.
\hfill $\square$
\end{definition}

\subsection{Relationship between Bolza and Mayer Hybrid Optimal Control Problems}
\label{subsec:BHOCPandMHOCPrelationship}
In general, a BHOCP can be converted into an MHOCP with the introduction of the auxiliary state component $z$ and the extension of the continuous valued state to
\begin{equation}
\hat{x}_{q}:=\left[\begin{array}{c}
z_{q}\\
x_{q}
\end{array}\right]\label{ExtendedState}
\end{equation}

With the definition of the augmented vector fields as
\begin{equation}
\dot{\hat{x}}_{q}=\hat{f}_{q}\left(\hat{x},u\right):=\left[\begin{array}{c}
l_{q}\left(x,u\right)\\
f_{q}\left(x,u\right)
\end{array}\right],\label{ExtendedField}
\end{equation}
subject to the initial condition
\begin{equation}
\hat{h}_{0}=\left(q_{0},\hat{x}_{q_{0}}\left(t_{0}\right)\right)=\left(q_{0},\left[\begin{array}{c}
0\\
x_{0}
\end{array}\right]\right),\label{ExtendedIC}
\end{equation}
and with the switching boundary conditions governed by the extended jump
function defined as
\begin{equation}
\hat{x}\left(t_{j}\right)=\hat{\xi}\left(\hat{x}\left(t_{j}-\right)\right):=\left[\begin{array}{c}
z\left(t_{j}-\right)+c\left(x\left(t_{j}-\right)\right)\\
\xi\left(x\left(t_{j}-\right)\right)
\end{array}\right],\label{ExtendedJump2}
\end{equation}
the cost  \eqref{Hybrid Cost} of the BHOCP turns into the Mayer form with
\begin{equation}
J\left(t_{0},t_{f},\hat{h}_{0},L;I_{L}\right):=\hat{g}\left(\hat{x}_{q_{L}}\left(t_{f}\right)\right),\label{Mayer - full transformation-1}
\end{equation}
where 
\begin{equation}
\hat{g}\left(\hat{x}_{q_{L}}\left(t_{f}\right)\right)=z\left(t_{f}\right)+g\left(x\left(t_{f}\right)\right)\,.
\label{gHatDefinition}
\end{equation}

\hfill $\square$

\section{The Hybrid Minimum Principle (HMP)}
\label{sec:HMP}

\begin{theorem}
\label{theorem:HMP}
Consider the hybrid system $\mathbb{H}$ subject to assumptions A0-A3, and the HOCP \eqref{HOCP} for the hybrid performance function \eqref{Hybrid Cost}.
Define the family of system Hamiltonians by 
\begin{equation}
H_{q}\left(x_q,\lambda_q,u_q\right)=\lambda_q^{T}f_{q}\left(x_q,u_q\right)+l_{q}\left(x_q,u_q\right),\label{Hamiltonian - Bolza}
\end{equation}
$x_q, \lambda_q \in \mathbb{R}^{n_{q}}$, $u_q \in U_{q}$, $q\in Q$, and let $\left\{q_i\right\}_{i=0}^{L}$ be a specified sequence of discrete states with its associated set of switchings. Then for an optimal input $u^{o}$ and along the corresponding optimal trajectory $x^{o}$, there exists an adjoint process $\lambda^{o}$ such that
\begin{equation}
H_{q}\left(x^{o}_{q},\lambda^{o}_{q},u^{o}_{q}\right)\leq H_{q}\left(x^{o}_{q},\lambda^{o}_{q},v\right),\label{HminWRTu}
\end{equation}
for all $v\in U_{q}$, 
where $(x^{o},\lambda^{o})$ satisfy
\begin{align}
\dot{x}^{o}_{q} &=\frac{\partial H_{q}}{\partial \lambda_q}\left(x_q,\lambda_q,u_q\right), \label{StateDynamics}
\\
\dot{\lambda}^{o}_{q} &=-\frac{\partial H_{q}}{\partial x_q}\left(x_q,\lambda_q,u_q\right),\label{lambda dynamics}
\end{align}
almost everywhere $\; t\in\left[t_{0},t_{f}\right]$, subject to
\begin{align}
x^o_{q_0}\left(t_0\right) &=x_0, \label{StateIC}
\\
x^o_{q_j}\left(t_{j}\right) &=\xi_{\sigma_j}\left(x^o_{q_{j-1}}\left(t_{j}-\right)\right), \label{StateBC} \\
\lambda^{o}_{q_L}\left(t_{f}\right) &=\nabla g\left(x^{o}_{q_L}\left(t_{f}\right)\right),\label{LambdaTerminal}\\
\lambda^{o}_{q_{j-1}}\left(t_{j}-\right)\equiv\lambda^{o}_{q_{j-1}}\left(t_{j}\right) &={\nabla\xi_{\sigma_j}}^{T}\lambda^{o}_{q_j}\left(t_{j}+\right) + \nabla c_{\sigma_j} +p\nabla m_{q_{j-1}q_j},\label{LambdaBC}
\end{align}
where $p\in\mathbb{R}$ when $t_{j}$ indicates the time of an autonomous switching, subject to the switching manifold condition $m_{q_{j-1}q_j}\big(x^o_{q_{j-1}}(t_{j}-)\big) = 0$, and $p=0$ when $t_{j}$ indicates the time of a controlled switching. Moreover, at both autonomous and controlled switching instants $t_{j}$, the Hamiltonian satisfies
\begin{samepage}
\begin{multline}
H_{q_{j-1}} \left. \left(x^{o},\lambda^{o},u^{o}\right)\right|_{t_{j}-} \equiv H_{q_{j-1}}  \left(t_{j}\right)
\\
=H_{q_{j}} \left(t_{j}\right) \equiv H_{q_{j}}\left.  \left(x^{o},\lambda^{o},u^{o}\right)\right|_{t_{j}+} .\label{Hamiltonian jump}
\end{multline}
\hfill $\square$
\end{samepage}
\end{theorem}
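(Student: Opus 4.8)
The plan is to carry out first-order needle-variation analysis, using the Bolza-to-Mayer reduction of Section~\ref{subsec:BHOCPandMHOCPrelationship} so that the adjoint acts as a genuine dual "integrating factor.'' First I would pass to the equivalent MHOCP with extended state $\hat x_q=(z_q,x_q)$, field $\hat f_q=(l_q,f_q)$, jump $\hat\xi_{\sigma_j}(\hat x)=(z+c_{\sigma_j}(x),\xi_{\sigma_j}(x))$, terminal cost $\hat g(\hat x)=z+g(x)$, and the same switching manifolds (which depend only on $x$). Fix the discrete sequence $\{q_i\}_{i=0}^{L}$ and an optimal input-state trajectory $(x^o,u^o)$. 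For a Lebesgue point $\tau\in(t_i,t_{i+1})$ of $u^o$ and a value $v\in U_{q_i}$, introduce the classical needle variation of width $\varepsilon>0$. By the continuous-dependence lemma following Theorem~\ref{theorem:ExistenceUniqueness}, together with A1, A2 and the non-termination clause of Definition~\ref{def:InputStateTrajectory}, for all sufficiently small $\varepsilon$ the perturbed trajectory keeps the same discrete sequence, every autonomous crossing stays transversal in the sense of \eqref{TransversalityOfTrajectoriesToManifolds}, and no manifold-termination occurs; hence the variation is admissible. It produces $\delta\hat x(\tau+)=\varepsilon\big(\hat f_{q_i}(\hat x(\tau),v)-\hat f_{q_i}(\hat x(\tau),u^o(\tau))\big)+o(\varepsilon)$, after which $\delta\hat x$ propagates along the variational equation $\dot{\delta\hat x}=\tfrac{\partial \hat f_{q_i}}{\partial x}\,\delta\hat x$ on each smooth segment.

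The core of the hybrid argument is the propagation of $\delta\hat x$ through switching instants. At a controlled switching time $t_j$, the time is itself a free variation parameter $\delta\theta_j\in\mathbb{R}$; at an autonomous switching the perturbed trajectory meets $m_{q_{j-1}q_j}$ at a shifted instant and, by transversality \eqref{TransversalityOfTrajectoriesToManifolds}, $\delta\theta_j$ is determined to first order by the linearised manifold condition $\nabla m_{q_{j-1}q_j}^{T}\big(\delta x(t_j-)+f_{q_{j-1}}\delta\theta_j\big)=0$. In both cases, advancing the pre-jump flow by $\delta\theta_j$, applying $\hat\xi_{\sigma_j}$, and retarding the post-jump flow back to the nominal instant gives $\delta\hat x(t_j+)=\nabla\hat\xi_{\sigma_j}\big(\delta\hat x(t_j-)+\hat f_{q_{j-1}}\delta\theta_j\big)-\hat f_{q_j}\delta\theta_j$; here $\nabla\hat\xi_{\sigma_j}$ is the (possibly non-square) Jacobian, so state-dimension changes are carried through automatically.

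I would then introduce the adjoint: define $\hat\lambda$ on $[t_0,t_f]$ by $\hat\lambda(t_f)=\nabla\hat g(\hat x^o(t_f))$, the backward dynamics $\dot{\hat\lambda}=-\big(\tfrac{\partial \hat f_{q_i}}{\partial x}\big)^{T}\hat\lambda$ on each segment, and the jump $\hat\lambda(t_j-)=\nabla\hat\xi_{\sigma_j}^{T}\hat\lambda(t_j+)+p_j\,(0,\nabla m_{q_{j-1}q_j})$, where $p_j:=0$ at a controlled switching and, at an autonomous switching, $p_j=-\big(\nabla m_{q_{j-1}q_j}^{T}f_{q_{j-1}}\big)^{-1}\hat\lambda(t_j+)^{T}\big(\nabla\hat\xi_{\sigma_j}\hat f_{q_{j-1}}-\hat f_{q_j}\big)$. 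A direct computation gives $\tfrac{d}{dt}(\hat\lambda^{T}\delta\hat x)=0$ on segments, and the propagation formula together with the choice of $p_j$ gives $\hat\lambda(t_j-)^{T}\delta\hat x(t_j-)=\hat\lambda(t_j+)^{T}\delta\hat x(t_j+)$ across autonomous switchings, and $\hat\lambda(t_j-)^{T}\delta\hat x(t_j-)=\hat\lambda(t_j+)^{T}\delta\hat x(t_j+)+\hat\lambda(t_j+)^{T}\big(\nabla\hat\xi_{\sigma_j}\hat f_{q_{j-1}}-\hat f_{q_j}\big)\delta\theta_j$ across controlled ones. Since the $z$-component of $\hat f_q$ is independent of $z$, the $z$-adjoint is constant and equals $1$ by the terminal condition; projecting onto the $x$-block recovers \eqref{lambda dynamics}, \eqref{LambdaTerminal} and \eqref{LambdaBC} with $H_q=\lambda_q^{T}f_q+l_q$ as in \eqref{Hamiltonian - Bolza}, and $\hat H_q\equiv H_q$.

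Optimality closes the argument: summing the (mutually independent, by convexity of the attainable first-order perturbation cone) contributions, the change of cost under one needle variation is $\varepsilon\big(H_{q_i}(x^o,\lambda^o,v)-H_{q_i}(x^o,\lambda^o,u^o)\big)\big|_{\tau}+o(\varepsilon)\ge 0$, which yields \eqref{HminWRTu} at every Lebesgue point and all $v$ (after the standard multi-needle/measurable-selection passage to the pointwise a.e. statement), while \eqref{StateDynamics}, \eqref{StateIC} and \eqref{StateBC} are just the trajectory definition. For a controlled switching, freeness of $\delta\theta_j$ forces $\hat\lambda(t_j+)^{T}\big(\nabla\hat\xi_{\sigma_j}\hat f_{q_{j-1}}-\hat f_{q_j}\big)=0$, which in original coordinates is $H_{q_{j-1}}(t_j-)=H_{q_j}(t_j+)$; for an autonomous switching the chosen $p_j$ makes $H_{q_{j-1}}(t_j-)-H_{q_j}(t_j+)=p_j\,\nabla m_{q_{j-1}q_j}^{T}f_{q_{j-1}}+\hat\lambda(t_j+)^{T}\big(\nabla\hat\xi_{\sigma_j}\hat f_{q_{j-1}}-\hat f_{q_j}\big)=0$ identically, so \eqref{Hamiltonian jump} holds in all cases, and non-degeneracy is automatic since $\lambda_z\equiv1$. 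I expect the main obstacle to be the propagation through autonomous switchings: justifying uniformly in the variation that small perturbations keep each crossing transversal, non-terminating and within an $O(\varepsilon)$ time window of the nominal instant (where A1, A2, the non-termination hypothesis and the continuous-dependence lemma are essential), and rigorously deriving the first-order expansions of $\delta\theta_j$ and $\delta\hat x(t_j+)$ with correct nominal-time-versus-crossing-time bookkeeping, including the non-square Jacobian arising from state-dimension changes; the accompanying multi-needle/convexity and measurable-selection steps are routine but still require care.
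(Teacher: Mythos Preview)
Your proposal is correct and follows essentially the same route as the paper: Bolza-to-Mayer reduction via $\hat x=(z,x)$, needle variations, first-order propagation of $\delta\hat x$ through each switching with the shifted crossing time, and an adjoint constructed so that $\hat\lambda^{T}\delta\hat x$ is conserved; the paper merely organizes this location by location (last segment, then penultimate with one switch, then backward induction) and writes $\hat\lambda$ explicitly through state-transition matrices $\Phi_{q_k}$, whereas you phrase it as a single conservation law. One point worth noting: your derivation of \eqref{Hamiltonian jump} at a \emph{controlled} switching---by treating $\delta\theta_j$ as a free variation parameter and invoking optimality---is actually more explicit than the paper's, whose displayed Hamiltonian-continuity computation relies on the autonomous formula for $p$ and only remarks that the controlled case follows ``similarly by setting $\delta^{\epsilon}=0$.''
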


\begin{proof}
First, in part 
A, we study a needle variation to the optimal input at the last location $u^o_{q_L}$ at a Lebesgue instant\footnote{See e.g. \cite{AgrachevSachkov} for the definition of Lebesgue points. For any ${u \in L_{\infty}([t_i, t_{i+1} ], U)}$, $u$ may be modified on a set of measure zero so that all points are Lebesgue points (see e.g. \cite{Rudin}).} $t \in \left(t_L,t_{L+1}\right] \equiv \left(t_L,t_f\right]$  to derive the Hamiltonian canonical equations \eqref{StateDynamics} and \eqref{lambda dynamics}, the adjoint terminal condition \eqref{LambdaTerminal}, and the Hamiltonian minimization condition \eqref{HminWRTu} in that location. This part of the proof is similar to the proof of the classical Pontryagin Minimum Principle.

Next, in part 
B, we perform a variation in the penultimate, $L-1^{\text{st}}$, location in order to obtain $\left(i\right)$ Hamiltonian canonical equations \eqref{StateDynamics} and \eqref{lambda dynamics}, and $\left(ii\right)$ the Hamiltonian minimization condition \eqref{HminWRTu} at the location $q_{L-1}$, as well as $\left(iii\right)$ the boundary conditions \eqref{StateBC} and \eqref{LambdaBC}, and $\left(iv\right)$ the Hamiltonian boundary condition \eqref{Hamiltonian jump} at time $t_L$.

Then, in part 
C, we extend the analysis for a general switching instant $t_j$ and prove that $\left(i\right)$ to $\left(iv\right)$ above hold for all locations.


In order to provide the simplest derivation of the main result we employ the Mayer version of the problem throughout the analysis. The equivalence of the Mayer and the Bolza formulations given in Section \ref{subsec:BHOCPandMHOCPrelationship} then yields the Hybrid Minimum Principle in both forms.

Due to space limitation, the arguments of a function are sometimes written as superscripts, or the arguments are not displayed in full whenever their identification is clear, e.g. $f_{q_j}\left(x_{q_j}(s),u_{q_j}(s)\right) \equiv f_{q_j}^{\left(x_{q_j}^{(s)},u_{q_j}^{(s)}\right)} \equiv f_{q_j}^{\left(s\right)}$, etc. Other notation conventions are defined upon their first appearance.

\subsection{The last discrete state location}
\label{subsec:Last}
First, consider a Lebesgue time $t \in \left(t_L,t_{L+1}\right] \equiv \left(t_L,t_f\right]$ and the evolution of the optimal state $\hat{x}^{o}\left(\tau\right)$, $\tau \in \left[t_0,t_f\right]$, governed by the set of differential equations
\begin{equation}
\frac{d}{d \tau}\hat{x}_{q_i}^o = \hat{f}_{q_{i}}\left(\hat{x}_{q_{i}}^{o}\left(\tau\right),u_{q_{i}}^{o}\left(\tau\right)\right), \hspace{10pt} \tau \in \left[t_i,t_{i+1}\right).
\label{ODEforXhat}
\end{equation}

We perform a needle variation at a Lebesgue time $t$ in the form of
\begin{equation}
u^{\epsilon}\left(\tau\right)=\begin{cases}
\begin{array}{lccclcc}
u_{q_{j-1}}^{o}\left(\tau\right) &  & \text{if} &  & \tau\in\left[t_{j-1},t_{j}\right) &  & 1\leq j\leq L\\
u_{q_{L}}^{o}\left(\tau\right) &  & \text{if} &  & \tau\in\left[t_{L},t-\epsilon\right)\\
v &  & \text{if} &  & \tau\in\left[t-\epsilon,t\right)\\
u_{q_{L}}^{o}\left(\tau\right) &  & \text{if} &  & \tau\in\left[t,t_{f}\right]
\end{array}\end{cases}.\label{NeedleVariationL}
\end{equation}

This corresponds to a perturbed trajectory $\hat{x}^{\epsilon}\left( \tau \right), \tau \in \left[t_0,t_f\right]$. Denoting $\delta\hat{x}_{q_{L}}^{\epsilon}\left(\tau\right):=\hat{x}_{q_{L}}^{\epsilon}\left(\tau\right)-\hat{x}_{q_{L}}^{o}\left(\tau\right)$, it necessarily satisfies $\delta\hat{x}_{q_{i}}^{\epsilon}\left(\tau\right) = 0$
for $\tau \in \left[t_0,t\right)$, $0 \leq i \leq L$, and for $\tau \in \left[t,t_f\right]$ it satisfies
\begin{multline}
\delta\hat{x}_{q_{L}}^{\epsilon}\left(\tau\right)=\int_{t-\epsilon}^{t}\left[\hat{f}{}_{q_{L}}\left(\hat{x}_{q_{L}}^{\epsilon}\left(s\right),v\right)-\hat{f}{}_{q_{L}}\left(\hat{x}_{q_{L}}^{o}\left(s\right),u_{q_{L}}^{o}\left(s\right)\right)\right]ds
\\
+\int_{t}^{\tau}\left[\hat{f}{}_{q_{L}}\left(\hat{x}_{q_{L}}^{\epsilon}\left(s\right),u_{q_{L}}^{o}\left(s\right)\right)-\hat{f}{}_{q_{L}}\left(\hat{x}_{q_{L}}^{o}\left(s\right),u_{q_{L}}^{o}\left(s\right)\right)\right]ds,
\end{multline}

Defining the first order state variation as
\begin{equation}
y\left(\tau\right):=\left.\frac{d}{d\epsilon}\hat{x}^{\epsilon}\left(\tau\right)\right|_{\epsilon=0}\equiv\underset{\epsilon\rightarrow0}{\lim}\frac{1}{\epsilon}\delta\hat{x}^{\epsilon}\left(\tau\right),
\end{equation}
the dynamics and boundary conditions of the first order state sensitivity are derived
\begin{align}
&\frac{d}{d\tau}y_{q_{L}}\left(\tau\right)=\frac{\partial f_{q_{L}}}{\partial x_{q_{L}}}\left(x_{q_{L}}^{o}\left(\tau\right),u_{q_{L}}^{o}\left(\tau\right)\right)y_{q_{L}}\left(\tau\right),
\label{yLdynamics}
\\
&y_{q_{L}}\left(t\right)=f_{q_{L}}\left(x_{q_{L}}^{o}\left(t\right),v\right)-f_{q_{L}}\left(x_{q_{L}}^{o}\left(t\right),u_{q_{L}}^{o}\left(t\right)\right).
\label{yLtNoSwitch}
\end{align}

Denoting the state transition matrix corresponding to \eqref{yLdynamics} by $\Phi_{q_{L}}$, it is shown by Linearization Theory (see e.g. \cite{PECHybridNotes, Sontag}) that
\begin{equation}
y_{q_{L}}\left(t_{f}\right)=\Phi_{q_{L}}\left(t_{f},t\right)\left[\hat{f}{}_{q_{L}}\left(\hat{x}_{q_{L}}^{o}\left(t\right),v\right)-\hat{f}{}_{q_{L}}\left(\hat{x}_{q_{L}}^{o}\left(t\right),u_{q_{L}}^{o}\left(t\right)\right)\right].
\label{yfNoSwitching}
\end{equation}

The optimality of $\hat{x}^o$ implies that
\begin{equation}
\hat{g}\left(\hat{x}_{q_{L}}^{\epsilon}\left(t_{f}\right)\right)\geq\hat{g}\left(\hat{x}_{q_{L}}^{o}\left(t_{f}\right)\right),
\end{equation}
which is equivalent to
\begin{equation}
\left.\frac{d}{d\epsilon}J\left(u^{\epsilon}\right)\right|_{\epsilon=0}=\left[\frac{\partial\hat{g}}{\partial\hat{x}_{q_{L}}}\left(\hat{x}_{q_{L}}^{o}\left(t_{f}\right)\right)\right]^{T}y_{q_{L}}\left(t_{f}\right) \geq 0.
\label{qLoptimalityTf}
\end{equation}

Substitution of \eqref{yfNoSwitching} into \eqref{qLoptimalityTf} results in
\begin{multline}
\frac{\partial\hat{g}}{\partial\hat{x}_{q_{L}}}\left(\hat{x}_{q_{L}}^{o}\left(t_{f}\right)\right)^{T}\Phi_{q_{L}}\left(t_{f},t\right)\hat{f}_{q_{L}}\left(\hat{x}_{q_{L}}^{o}\left(t\right),v\right)
\\
\geq\frac{\partial\hat{g}}{\partial\hat{x}_{q_{L}}}\left(\hat{x}_{q_{L}}^{o}\left(t_{f}\right)\right)^{T}\Phi_{q_{L}}\left(t_{f},t\right)\hat{f}_{q_{1}}\left(\hat{x}_{q_{L}}^{o}\left(t\right),u_{q_{L}}^{o}\left(t\right)\right).
\label{InequalityFromOptimality}
\end{multline}

Setting 
\begin{equation}
{\left.{\hat{\lambda}_{q_{L}}^{o}}\right.}^{T}\left(t\right)\equiv\left[\lambda_{0,q_{L}}^{o}\left(t\right),{\lambda_{q_{L}}^{o}}^{T}\left(t\right)\right]=\frac{\partial\hat{g}}{\partial\hat{x}_{q_{L}}}\left(\hat{x}_{q_{L}}^{o}\left(t_{f}\right)\right)^{T}\Phi_{q_{L}}\left(t_{f},t\right),
\label{LambdaHatDefinition}
\end{equation}
for $t\in \left(t_L,t_f\right]$ and evaluating it at $t=t_f$ we obtain
\begin{equation}
\hat{\lambda}_{q_{L}}^{o}\left(t_{f}\right)=\frac{\partial\hat{g}}{\partial\hat{x}_{q_{L}}}\left(\hat{x}_{q_{L}}^{o}\left(t_{f}\right)\right),
\end{equation}
where, by the definition \eqref{gHatDefinition} for $\hat{g}$, this is equivalent to
\begin{align}
\lambda_{0,q_{L}}^{o}\left(t_{f}\right)&=1,\label{Lambda0Terminal}
\\
\lambda_{q_{L}}^{o}\left(t_{f}\right)&=\frac{\partial g}{\partial x_{q_{L}}}\left(x_{q_{L}}^{o}\left(t_{f}\right)\right)\equiv\nabla g\left(x_{q_{L}}^{o}\left(t_{f}\right)\right).
\end{align}

Also by differentiation of \eqref{LambdaHatDefinition} with respect to $t$ we obtain
\begin{multline}
\frac{d}{dt}\hat{\lambda}_{q_{L}}^{o}\left(t\right)=-\frac{\partial\hat{f}_{q_{L}}}{\partial\hat{x}_{q_{L}}}^{T}\left[\Phi_{q_{L}}\left(t_{f},t\right)\right]^{T}\frac{\partial\hat{g}}{\partial\hat{x}_{q_{L}}}\left(\hat{x}_{q_{L}}^{o}\left(t_{f}\right)\right)
\\
=-\frac{\partial\hat{f}_{q_{L}}}{\partial\hat{x}_{q_{L}}}^{T}\hat{\lambda}_{q_{L}}^{o}\left(t\right),
\end{multline}
which is equivalent to
\begin{align}
\frac{d}{dt}\lambda_{0,q_{L}}^{o}=\; & 0, \label{Lambda0Dynamics}
\\
\begin{split}
\frac{d}{dt}\lambda_{q_{L}}^{o}=&-\left(\frac{\partial l_{q_{L}}\left(x_{q_{L}}^{o}\left(t\right),u_{q_{L}}^{o}\left(t\right)\right)}{\partial x_{q_{L}}}\right)\lambda_{0,q_{L}}^{o}\left(t\right)
\\
&-\left(\frac{\partial f_{q_{L}}\left(x_{q_{L}}^{o}\left(t\right),u_{q_{L}}^{o}\left(t\right)\right)}{\partial x_{q_{L}}}\right)^{T}\lambda_{q_{L}}^{o}\left(t\right). \label{LambdaDynamics}
\end{split}
\end{align}

The zero dynamics \eqref{Lambda0Dynamics} with the terminal condition \eqref{Lambda0Terminal} gives $\lambda_{0,q_{L}}^{o}\left(t\right) = 1$, for all $t\in \left(t_L,t_f\right)$, and equation \eqref{LambdaDynamics} is equivalent to
\begin{equation}
\dot{\lambda}_{q_{L}}^{o}=-\frac{\partial H_{q_{L}}\left(x_{q_{L}}^{o},\lambda_{q_{L}}^{o},u_{q_{L}}^{o}\right)}{\partial x_{q_{L}}},
\end{equation}
which is valid on $\left(t_L,t_f\right)$ and where by definition
\begin{equation}
H_{q_{L}}\left(x_{q_{L}},\lambda_{q_{L}},u_{q_{L}}\right)=l_{q_{L}}\left(x_{q_{L}},u_{q_{L}}\right)+\lambda_{q_{L}}^{T}f_{q_{L}}\left(x_{q_{L}},u_{q_{L}}\right).
\label{qLHamiltonianDefinition}
\end{equation}

From the definition of Hamiltonian \eqref{qLHamiltonianDefinition} and through a simple differentiation, the Hamiltonian canonical equation \eqref{StateDynamics} for the state is also verified.

Also from \eqref{InequalityFromOptimality} and \eqref{qLHamiltonianDefinition} the Hamiltonian minimization 
\begin{equation}
H_{q_{L}}\left(x_{q_{L}}^{o},\lambda_{q_{L}}^{o},u_{q_{L}}^{o}\right)\leq H_{q_{L}}\left(x_{q_{L}}^{o},\lambda_{q_{L}}^{o},v\right) ,
\end{equation}
is obtained for all $v\in U_{q_{L}}$.

\subsection{The penultimate location}
\label{subsec:Penultimate}
Now consider a needle variation at time $t \in \left(t_{L-1},t_L\right]$ in the form of
\begin{equation}
u^{\epsilon}\left(\tau\right)=\left\{ \begin{array}{lclc}
u_{q_{j-1}}^{o}\left(\tau\right), &  & \tau\in\left[t_{j-1},t_{j}\right), & 1\leq j\leq L-1,\\
u_{q_{L-1}}^{o}\left(\tau\right), &  & \tau\in\left[t_{L},t-\epsilon\right),\\
v, &  & \tau\in\left[t-\epsilon,t\right),\\
u_{q_{L-1}}^{o}\left(\tau\right), &  & \tau\in\left[t,t_{L}-\delta^{\epsilon}\right),\\
u_{q_{L}}^{o}\left(t_{L}\right), &  & \tau\in\left[t_{L}-\delta^{\epsilon},t_{L}\right),\\
u_{q_{L}}^{o}\left(\tau\right), &  & \tau\in\left[t_{L},t_{f}\right],
\end{array}\right.,
\label{NeedleVariationL-1}
\end{equation}
where $\delta^{\epsilon} \geq 0$ corresponds to the case when the perturbed trajectory arrives on the switching manifold $\hat{m} \left(\hat{x}\right) := m_{q_{L-1}q_L}\left(x\right) = 0$ at an earlier instant. The case with a later arrival time, i.e. $\delta^{\epsilon} \leq 0$ is handled in a similar fashion, and the case of a controlled switching, i.e. with no switching manifold, can be derived similarly by setting $\delta^{\epsilon} = 0$.

For $\tau \in \left[t,t_L-\delta^\epsilon\right)$ we may write
\begin{multline}
\delta\hat{x}_{q_{L-1}}^{\epsilon}\left(\tau\right):=\hat{x}_{q_{L-1}}^{\epsilon}\left(\tau\right)-\hat{x}_{q_{L-1}}^{o}\left(\tau\right)
\\
=\int_{t-\epsilon}^{t}\left[\hat{f}{}_{q_{L-1}}\left(\hat{x}_{q_{L-1}}^{\epsilon}\left(s\right),v\right)-\hat{f}{}_{q_{L-1}}\left(\hat{x}_{q_{L-1}}^{o}\left(s\right),u_{q_{L-1}}^{o}\left(s\right)\right)\right]ds
\\
+\int_{t}^{\tau}\Big[\hat{f}{}_{q_{L-1}}\left(\hat{x}_{q_{L-1}}^{\epsilon}\left(s\right),u_{q_{L-1}}^{o}\left(s\right)\right)
\\
-\hat{f}{}_{q_{L-1}}\left(\hat{x}_{q_{L-1}}^{o}\left(s\right),u_{q_{L-1}}^{o}\left(s\right)\right)\Big]ds,
\end{multline}

At $t_L$ the state of the optimal trajectory is determined by
\begin{multline}
\hat{x}_{q_{L}}^{o}\left(t_{L}\right)=\hat{\xi}\left(\hat{x}_{q_{L-1}}^{o}\left(t_{L}-\right)\right)
\\
=\hat{\xi}\left(\hat{x}_{q_{L-1}}^{o}\left(t_{L}-\delta^{\epsilon}\right)+\int_{t_{L}-\delta^{\epsilon}}^{t_{L}}\hat{f}_{q_{L-1}}\left(\hat{x}_{q_{L-1}}^{o}\left(\tau\right),u_{q_{L-1}}^{o}\left(\tau\right)\right)d\tau\right),
\end{multline}
and the state of the perturbed trajectory is calculated as
\begin{multline}
\hat{x}_{q_{L}}^{\epsilon}\left(t_{L}\right)=\hat{\xi}\left(\hat{x}_{q_{L-1}}^{\epsilon}\left(t_{L}-\delta^{\epsilon}-\right)\right)
\\
+\int_{t_{L}-\delta^{\epsilon}}^{t_{L}}\hat{f}_{q_{L}}\left(\hat{x}_{q_{L}}^{\epsilon}\left(\tau\right),u_{q_{L}}^{o}\left(t_{L}\right)\right)d\tau .
\end{multline}

Thus
\begin{multline}
\delta\hat{x}_{q_{L}}^{\epsilon}\left(t_{L}\right)=\hat{x}_{q_{L}}^{\epsilon}\left(t_{L}\right)-\hat{x}_{q_{L}}^{o}\left(t_{L}\right)
\\
=\hat{\xi}\left(\hat{x}_{q_{L-1}}^{\epsilon}\left(t_{L}-\delta^{\epsilon}-\right)\right)+\int_{t_{L}-\delta^{\epsilon}}^{t_{L}}\hat{f}_{q_{L}}\left(\hat{x}_{q_{L}}^{\epsilon}\left(\tau\right),u_{q_{L}}^{o}\left(t_{L}\right)\right)d\tau
\\
-\hat{\xi}\left(\hat{x}_{q_{L-1}}^{o}\left(t_{L}-\delta^{\epsilon}\right)+\int\limits_{t_{L}-\delta^{\epsilon}}^{t_{L}}\hat{f}_{q_{L-1}}\left(\hat{x}_{q_{L-1}}^{o}\left(\tau\right),u_{q_{L-1}}^{o}\left(\tau\right)\right)d\tau\right),
\end{multline}
and hence, with the definition of $\mu_L:=\underset{\epsilon\rightarrow0}{\lim}\frac{\delta^{\epsilon}}{\epsilon}$, the first order state sensitivity at $t_L$ is determined from
\begin{equation}
y_{q_{L}}\left(t_{L}\right)=\frac{\partial\hat{\xi}}{\partial\hat{x}_{q_{L-1}}}\left(\hat{x}_{q_{L-1}}^{o}\left(t_{L}-\right)\right)y_{q_{L-1}}\left(t_{L}-\right)+\mu_{L}\hat{f}_{q_{L},\hat{\xi}}^{\hat{\xi},q_{L-1}},
\label{yBeforeLastAfterTs}
\end{equation}
where \begin{multline}
\hat{f}_{q_L,\hat{\xi}}^{\hat{\xi},q_{L-1}} := \hat{f}_{q_{L}}\left(\hat{\xi}\left(\hat{x}_{q_{L-1}}^{o}\left(t_{L}-\right)\right),u_{q_{L}}^{o}\left(t_{L}\right)\right)
\\
 -\frac{\partial\hat{\xi}}{\partial\hat{x}_{q_{L-1}}}\left(\hat{x}_{q_{L-1}}^{o}\left(t_{L}-\right)\right)\hat{f}_{q_{L-1}}\left(\hat{x}_{q_{L-1}}^{o}\left(t_{L}-\right),u_{q_{L-1}}^{o}\left(t_{L}-\right)\right),
\end{multline}
and where $\mu_L = 0$ if $t_L$ is the time of a controlled switching since $\delta^{\epsilon} = 0$, and
\begin{equation}
\mu_L=\frac{\left[\frac{\partial\hat{m}\left(\hat{x}_{q_{L-1}}^{o}\left(t_{L}-\right)\right)}{\partial\hat{x}_{q_{L-1}}}\right]^{T}y_{q_{L-1}}\left(t_{L}-\right)}{\left[\frac{\partial\hat{m}\left(\hat{x}_{q_{L-1}}^{o}\left(t_{L}-\right)\right)}{\partial\hat{x}_{q_{L-1}}}\right]^{T}\hat{f}_{q_{L-1}}\left(\hat{x}_{q_{L-1}}^{o}\left(t_{L}-\right),u_{q_{L-1}}^{o}\left(t_{L}-\right)\right)}\,,
\label{MuL}
\end{equation}
in the case of an autonomous switching. In writing \eqref{MuL} we have employed the fact that 
\begin{equation}
\lim_{\epsilon\rightarrow0}\frac{1}{\epsilon}\left[\delta\hat{x}_{q_{L-1}}^{\epsilon}\left(t_{L}^{-}\right)-\int _{t_{L}-\delta^{\epsilon}}^{t_{L}}\hat{f}_{q_{L-1}}^{\left(\hat{x}^{o},\hat{u}^{o}\right)}d\tau\right]^{T}\frac{\partial\hat{m}\left(\hat{x}_{q_{L-1}}^{o}\left(t_{L}^{-}\right)\right)}{\partial\hat{x}_{q_{L-1}}}=0,
\end{equation}
because by the switching manifold conditions
$
\hat{m}\big(\hat{x}_{q_{L-1}}^{o}\left(t_{L}-\right)\big) = \hat{m}\big(\hat{x}_{q_{L-1}}^{\epsilon}\left(t_{L}-\delta^{\epsilon}-\right)\big) = 0,
$
and therefore
$
\lim_{\epsilon\rightarrow0}\frac{1}{\epsilon}\left[\hat{x}_{q_{L-1}}^{\epsilon}\left(t_{L}-\delta^{\epsilon}-\right)-\hat{x}_{q_{L-1}}^{o}\left(t_{L}-\right)\right]^T \frac{\partial\hat{m}\left(\hat{x}_{q_{L-1}}^{o}\left(t_{L}-\right)\right)}{\partial\hat{x}_{q_{L-1}}} = 0.
$
\vspace{10pt}

Similar to part 
A, the dynamics and boundary conditions of the first order state sensitivity are derived as
\begin{align}
&y_{q_{L-1}}\left(t\right)=f_{q_{L-1}}\left(x_{q_{L-1}}^{o}\left(t\right),v\right)-f_{q_{L-1}}\left(x_{q_{L-1}}^{o}\left(t\right),u_{q_{L-1}}^{o}\left(t\right)\right),
\\
&\frac{d}{d\tau}y_{q_{L-1}}\left(\tau\right)=\frac{\partial f_{q_{L-1}}}{\partial x_{q_{L-1}}}\left(x_{q_{L-1}}^{o}\left(\tau\right),u_{q_{L-1}}^{o}\left(\tau\right)\right)y_{q_{L-1}}\left(\tau\right),
\\
&y_{q_{L}}\left(t_{L}\right)=\frac{\partial\hat{\xi}}{\partial\hat{x}_{q_{L-1}}}\left(\hat{x}_{q_{L-1}}^{o}\left(t_{L}-\right)\right)y_{q_{L-1}}\left(t_{L}-\right)+\mu_{L}\hat{f}_{q_{L},\hat{\xi}}^{\hat{\xi},q_{L-1}},
\\
&\frac{d}{d\tau}y_{q_{L}}\left(\tau\right)=\frac{\partial f_{q_{L}}}{\partial x_{q_{L}}}\left(x_{q_{L}}^{o}\left(\tau\right),u_{q_{L}}^{o}\left(\tau\right)\right)y_{q_{L}}\left(\tau\right)d\tau,
\end{align}
and thus
\begin{multline}
y_{q_{L}}\left(t_{f}\right)=\mu_{L}\Phi_{q_{L}}\left(t_{f},t_{L}\right)\hat{f}_{q_{L},\hat{\xi}}^{\hat{\xi},q_{L-1}}\\+\Phi_{q_{L}}^{\left(t_{f},t_{L}\right)}\frac{\partial\hat{\xi}}{\partial\hat{x}_{q_{L-1}}}\Phi_{q_{L-1}}^{\left(t_{L},t\right)}\left[\hat{f}{}_{q_{L-1}}^{\left(\hat{x}_{q_{L-1}}^{o\left(t\right)},v\right)}-\hat{f}{}_{q_{L-1}}^{\left(\hat{x}_{q_{L-1}}^{o\left(t\right)},u_{q_{L-1}}^{o\left(t\right)}\right)}\right].
\end{multline}

Therefore, the optimality condition \eqref{qLoptimalityTf} is expressed as
\begin{multline}
\left[\frac{\partial\hat{g}}{\partial\hat{x}_{q_{L}}}^{T}\Phi_{q_{L}}^{\left(t_{f},t_{L}\right)}\frac{\partial\hat{\xi}}{\partial\hat{x}_{q_{L-1}}}+p\left[\frac{\partial\hat{m}}{\partial\hat{x}_{q_{L-1}}}\right]^{T}\right]
\\
\hfill \Phi_{q_{L-1}}\left(t_{L},t\right)\hat{f}{}_{q_{L-1}}\left(\hat{x}_{q_{L-1}}^{\epsilon}\left(t\right),v\right)\\\geq\left[\frac{\partial\hat{g}}{\partial\hat{x}_{q_{L}}}^{T}\Phi_{q_{L}}^{\left(t_{f},t_{L}\right)}\frac{\partial\hat{\xi}}{\partial\hat{x}_{q_{L-1}}}+p\left[\frac{\partial\hat{m}}{\partial\hat{x}_{q_{L-1}}}\right]^{T}\right] \hfill
\\
\hfill\Phi_{q_{L-1}}\left(t_{L},t\right)\hat{f}{}_{q_{L-1}}\left(\hat{x}_{q_{L-1}}^{o}\left(t\right),u_{q_{L-1}}^{o}\left(t\right)\right),
\label{PreHamiltonianBeforeSwitching}
\end{multline}
where
\begin{equation}
p=\frac{\frac{\partial\hat{g}}{\partial\hat{x}_{q_{L}}}\left(\hat{x}_{q_{L}}^{o}\left(t_{f}\right)\right)^{T}\Phi_{q_{L}}\left(t_{f},t_{L}\right)\hat{f}_{q_{L},\hat{\xi}}^{\hat{\xi},q_{L-1}}}{\left[\frac{\partial\hat{m}\left(\hat{x}_{q_{L-1}}^{o}\left(t_{L}-\right)\right)}{\partial\hat{x}_{q_{L-1}}}\right]^{T}\hat{f}_{q_{L-1}}\left(\hat{x}_{q_{L-1}}^{o}\left(t_{L}-\right),u_{q_{L-1}}^{o}\left(t_{L}-\right)\right)}.
\label{pDefinition}
\end{equation}

Setting
\begin{multline}
{\left.\hat{\lambda}_{q_{L-1}}^{o}\right.}^{T}\left(t\right)
=\scalebox{1.5}{\Bigg[}\frac{\partial\hat{g}\left(\hat{x}_{q_{L}}^{o}\left(t_{f}\right)\right)}{\partial\hat{x}_{q_{L}}}^{T}\Phi_{q_{L}}\left(t_{f},t_{L}\right)\frac{\partial\hat{\xi}\left(\hat{x}_{q_{L-1}}^{o}\left(t_{L}-\right)\right)}{\partial\hat{x}_{q_{L-1}}}
\\
+p\left[\frac{\partial\hat{m}\left(\hat{x}_{q_{L-1}}^{o}\left(t_{L}-\right)\right)}{\partial\hat{x}_{q_{L-1}}}\right]^{T}\scalebox{1.5}{\Bigg]}\Phi_{q_{L-1}}\left(t_{L},t\right),
\label{LambdaHatDefinitionProof}
\end{multline}
for $t\in \left[t_{L-1},t_L\right]$ and evaluating it at $t=t_L$ we obtain
\begin{multline}
{\left.\hat{\lambda}_{q_{L-1}}^{o}\right.}^{T}\left(t_{L}\right) =\frac{\partial\hat{g}\left(\hat{x}_{q_{L}}^{o}\left(t_{f}\right)\right)}{\partial\hat{x}_{q_{L}}}^{T}\Phi_{q_{L}}\left(t_{f},t_{L}\right)\frac{\partial\hat{\xi}\left(\hat{x}_{q_{L-1}}^{o}\left(t_{L}-\right)\right)}{\partial\hat{x}_{q_{L-1}}}
\\
+p\left[\frac{\partial\hat{m}\left(\hat{x}_{q_{L-1}}^{o}\left(t_{L}-\right)\right)}{\partial\hat{x}_{q_{L-1}}}\right]^{T}
\\
={\left.\hat{\lambda}_{q_{L}}^{o}\right.}^{T}\left(t_{L}+\right)\frac{\partial\hat{\xi}\left(\hat{x}_{q_{L-1}}^{o}\left(t_{L}-\right)\right)}{\partial\hat{x}_{q_{L-1}}}+p\left[\frac{\partial\hat{m}\left(\hat{x}_{q_{L-1}}^{o}\left(t_{L}-\right)\right)}{\partial\hat{x}_{q_{L-1}}}\right]^{T}
\label{LambdaHatBC}
\end{multline}

By the definition of $\hat{\xi}$ in \eqref{ExtendedJump2}, we have
\begin{multline}
\frac{\partial\hat{\xi}\left(\hat{x}_{q_{L-1}}^{o}\left(t_{L}-\right)\right)}{\partial\hat{x}_{q_{L-1}}}=\left[\begin{array}{c}
\frac{\partial\hat{\xi}}{\partial z}\\
\frac{\partial\hat{\xi}}{\partial x}
\end{array}\right]
\\
=\left[\begin{array}{cccc}
\frac{\partial\left[z+c\right]}{\partial z} & \frac{\partial\left[z+c\right]}{\partial x_{1}} & \cdots & \frac{\partial\left[z+c\right]}{\partial x_{n}}\\
\frac{\partial\xi_{1}}{\partial z} & \frac{\partial\xi_{1}}{\partial x_{1}} & \cdots & \frac{\partial\xi_{1}}{\partial x_{n}}\\
\vdots & \vdots & \ddots & \vdots\\
\frac{\partial\xi_{n}}{\partial z} & \frac{\partial\xi_{n}}{\partial x_{1}} & \cdots & \frac{\partial\xi_{n}}{\partial x_{n}}
\end{array}\right] \hspace{14pt}
\\
=\left[\begin{array}{cccc}
1 & \frac{\partial c}{\partial x_{1}} & \cdots & \frac{\partial c}{\partial x_{n}}\\
0 & \frac{\partial\xi_{1}}{\partial x_{1}} & \cdots & \frac{\partial\xi_{1}}{\partial x_{n}}\\
\vdots & \vdots & \ddots & \vdots\\
0 & \frac{\partial\xi_{n}}{\partial x_{1}} & \cdots & \frac{\partial\xi_{n}}{\partial x_{n}}
\end{array}\right]=\left[\begin{array}{cc}
1 & \nabla c^{T}\\
\mathbf{0} & \nabla\xi
\end{array}\right],
\label{ExtendedJumpExpansion}
\end{multline}
and since also $\frac{\partial m}{\partial z} = 0$ we have
\begin{equation}
\frac{\partial\hat{m}\left(\hat{x}_{q_{L-1}}^{o}\left(t_{L}-\right)\right)}{\partial\hat{x}_{q_{L-1}}}=\left[\begin{array}{c}
\frac{\partial\hat{m}}{\partial z}\\
\frac{\partial\hat{m}}{\partial x}
\end{array}\right]=\left[\begin{array}{c}
0\\
{\nabla}{m}
\end{array}\right].
\end{equation}

Hence, \eqref{LambdaHatBC} is equivalent to
\begin{multline}
\hat{\lambda}_{q_{L-1}}^{o}\left(t_{L}\right)\equiv\left[\begin{array}{c}
\lambda_{q_{L-1},0}^{o}\left(t_{L}\right)\\
\lambda_{q_{L-1}}^{o}\left(t_{L}\right)
\end{array}\right]
\\
={\frac{\partial\hat{\xi}\left(\hat{x}_{q_{L-1}}^{o}\left(t_{L}-\right)\right)}{\partial\hat{x}_{q_{L-1}}}}^{T}\hat{\lambda}_{q_{L}}^{o}\left(t_{L}+\right)+p\frac{\partial\hat{m}\left(\hat{x}_{q_{L-1}}^{o}\left(t_{L}-\right)\right)}{\partial\hat{x}_{q_{L-1}}}
\\
=\left[\begin{array}{cccc}
1 & 0 & \cdots & 0\\
\frac{\partial c}{\partial x_{1}} & \frac{\partial\xi_{1}}{\partial x_{1}} & \cdots & \frac{\partial\xi_{n}}{\partial x_{1}}\\
\vdots & \vdots & \ddots & \vdots\\
\frac{\partial c}{\partial x_{n}} & \frac{\partial\xi_{1}}{\partial x_{n}} & \cdots & \frac{\partial\xi_{n}}{\partial x_{n}}
\end{array}\right]\left[\begin{array}{c}
\lambda_{q_{L},0}^{o}\left(t_{L}+\right)\\
\lambda_{q_{L}}^{o}\left(t_{L}+\right)
\end{array}\right]+p\left[\begin{array}{c}
0\\
\nabla m
\end{array}\right]
\\
=\left[\begin{array}{c}
1\\
\nabla\xi^{T}\lambda_{q_{L}}^{o}\left(t_{L}+\right)+\nabla c+p\nabla m
\end{array}\right],
\end{multline}
i.e.
\begin{align}
\lambda_{q_{L-1},0}^{o}\left(t_{L}\right)&=1,
\\
\lambda_{q_{L-1}}^{o}\left(t_{L}\right)&=\nabla\xi^{T}\lambda_{q_{L}}^{o}\left(t_{L}+\right)+\nabla c+p\nabla m\,.
\label{DerivedLambdaBCforL-1}
\end{align}

Differentiating \eqref{LambdaHatDefinition} with respect to $t$ leads to
\begin{equation}
\frac{d}{dt}\hat{\lambda}_{q_{L-1}}^{o}\left(t\right)=-\left(\frac{\partial\hat{f}_{q_{L-1}}}{\partial\hat{x}_{q_{L-1}}}\left(\hat{x}_{q_{L-1}}^{o}\left(t\right),u_{q_{L-1}}^{o}\left(t\right)\right)\right)^{T}\hat{\lambda}_{q_{L-1}}^{o}\left(t\right),
\end{equation}
which is equivalent to
\begin{align}
\frac{d}{dt}\lambda_{q_{L-1},0}^{o}\left(t\right) &=0, &
\\
\frac{d}{dt}\lambda_{q_{L-1}}^{o}\left(t\right) &=-\left(\frac{\partial l_{q_{L-1}}\left(x_{q_{L-1}}^{o}\left(t\right),u_{q_{L-1}}^{o}\left(t\right)\right)}{\partial x_{q_{L-1}}}\right)\lambda_{0}^{o}\left(t\right) &
\\
\nonumber &-\left(\frac{\partial f_{q_{L-1}}\left(x_{q_{L-1}}^{o}\left(t\right),u_{q_{L-1}}^{o}\left(t\right)\right)}{\partial x_{q_{L-1}}}\right)^{T}\lambda_{q_{L-1}}^{o}\left(t\right).&\label{LambdaDynamicsBeforeSwitch}
\end{align}

Therefore, $\lambda_{q_{L-1},0}^{o}\left(t\right)=1$ for $t\in \left(t_{L-1},t_L\right)$ is obtained as before and 
\begin{equation}
\dot{\lambda}_{q_{L-1}}^o = - \frac{\partial H_{q_{L-1}} \left(x_{q_{L-1}}^o,\lambda_{q_{L-1}}^o,u_{q_{L-1}}^o\right) }{\partial x_{q_{L-1}}},
\end{equation}
holds for $t\in \left(t_{L-1},t_L\right)$ with the Hamiltonian defined as
\begin{multline}
H_{q_{L-1}}\left(x_{q_{L-1}},\lambda_{q_{L-1}},u_{q_{L-1}}\right)
\\
=l_{q_{L-1}}\left(x_{q_{L-1}},u_{q_{L-1}}\right)+\lambda_{q_{L-1}}^{T}f_{q_{L-1}}\left(x_{q_{L-1}},u_{q_{L-1}}\right).
\end{multline}

Also from \eqref{PreHamiltonianBeforeSwitching} the minimization of the Hamiltonian is concluded as
\begin{equation}
H_{q_{L-1}}\left(x_{q_{L-1}}^{o},\lambda_{q_{L-1}}^{o},u_{q_{L-1}}^{o}\right)\leq H_{q_{L-1}}\left(x_{q_{L-1}}^{o},\lambda_{q_{L-1}}^{o},v\right),
\end{equation}
for all $v\in U_{q_{L-1}}$. 

Evaluating both $H_{q_{L-1}}$ and $H_{q_L}$ at $t_L$ gives
\begin{multline}
\hspace{-6pt} H_{q_{L-1}}\left(t_{L}-\right)=l_{q_{L-1}}\left(x_{q_{L-1}}^{o}\left(t_{L}-\right),u_{q_{L-1}}^{o}\left(t_{L}-\right)\right) 
\\
\hfill +\lambda_{q_{L-1}}^{o}\left(t_{L}-\right)^{T}f_{q_{L-1}}\left(x_{q_{L-1}}^{o}\left(t_{L}-\right),u_{q_{L-1}}^{o}\left(t_{L}-\right)\right)
\\
~ =\left[\hat{\lambda}_{q_{L-1}}^{o}\left(t_{L}-\right)\right]^{T}\hat{f}_{q_{L-1}}\left(\hat{x}_{q_{L-1}}\left(t_{L}-\right),u_{q_{L-1}}^{o}\left(t_{L}-\right)\right) \hfill
\\
\underset{\eqref{LambdaHatBC}}{=}\left[\frac{\partial\hat{\xi}\left(\hat{x}_{q_{L-1}}^{o}\left(t_{L}^{-}\right)\right)}{\partial\hat{x}_{q_{L-1}}}^{T}\hat{\lambda}_{q_{L}}^{o}\left(t_{L}^{+}\right)+p\frac{\partial\hat{m}\left(\hat{x}_{q_{L-1}}^{o}\left(t_{L}^{-}\right)\right)}{\partial\hat{x}_{q_{L-1}}}\right]^{T}\hat{f}_{q_{L-1}}\left(t_{L}^{-}\right)
\\
\underset{\eqref{pDefinition}}{=}\scalebox{1.2}{\Bigg[}\hat{\lambda}_{q_{L}}^{o}\left(t_{L}+\right)^{T}\frac{\partial\hat{\xi}\left(\hat{x}_{q_{L-1}}^{o}\left(t_{L}-\right)\right)}{\partial\hat{x}_{q_{L-1}}} \hfill
\\
\hfill +\frac{\frac{\partial\hat{g}}{\partial\hat{x}_{q_{L}}}^{T}\Phi_{q_{L}}\left(t_{f},t_{L}\right)\hat{f}_{q_{L},\hat{\xi}}^{\hat{\xi},q_{L-1}}}{\frac{\partial\hat{m}}{\partial\hat{x}_{q_{L-1}}}^{T}\hat{f}_{q_{L-1}}\left(\hat{x}_{q_{L-1}}^{o\left(t_{L}^{-}\right)},u_{q_{L-1}}^{o\left(t_{L}^{-}\right)}\right)}\left[\frac{\partial\hat{m}}{\partial\hat{x}_{q_{L-1}}}\right]^{T}\scalebox{1.2}{\Bigg]}\hat{f}_{q_{L-1}}\left(t_{L}^{-}\right)
\\
~=\frac{\partial\hat{g}}{\partial\hat{x}_{q_{L}}}^{T}\Phi_{q_{L}}^{\left(t_{f},t_{L}\right)}\frac{\partial\hat{\xi}}{\partial\hat{x}_{q_{L-1}}}\hat{f}_{q_{L-1}}^{\left(\hat{x}_{q_{L-1}}\left(t_{L}-\right),u_{q_{L-1}}^{o}\left(t_{L}-\right)\right)} \hfill
\\
\hfill +\frac{\frac{\partial\hat{g}}{\partial\hat{x}_{q_{L}}}^{T}\Phi_{q_{L}}\left(t_{f},t_{L}\right)\hat{f}_{q_{L},\hat{\xi}}^{\hat{\xi},q_{L-1}}}{\frac{\partial\hat{m}}{\partial\hat{x}_{q_{L-1}}}^{T}\hat{f}_{q_{L-1}}^{\left(t_{L}-\right)}}\left[\frac{\partial\hat{m}}{\partial\hat{x}_{q_{L-1}}}\right]^{T}\hat{f}_{q_{L-1}}^{\left(t_{L}-\right)}
\\
\underset{\eqref{yBeforeLastAfterTs}}{=} \frac{\partial\hat{g}}{\partial\hat{x}_{q_{L}}}^{T}\Phi_{q_{L}}^{\left(t_{f},t_{L}\right)}\frac{\partial\hat{\xi}}{\partial\hat{x}_{q_{L-1}}}\hat{f}_{q_{L-1}}^{\left(t_{L}-\right)} \hfill
\\
\hfill +\frac{\partial\hat{g}}{\partial\hat{x}_{q_{L}}}^{T}\Phi_{q_{L}}^{\left(t_{f},t_{L}\right)}\left[\hat{f}_{q_{L}}^{\left(t_{L}\right)}-\frac{\partial\hat{\xi}}{\partial\hat{x}_{q_{L-1}}}\hat{f}_{q_{L-1}}^{\left(t_{L}-\right)}\right]
\\
~=\frac{\partial\hat{g}}{\partial\hat{x}_{q_{L}}}^{T}\Phi_{q_{L}}\left(t_{f},t_{L}\right)\hat{f}_{q_{L}}\left(\hat{\xi}\left(\hat{x}_{q_{L-1}}^{o}\left(t_{L}-\right)\right),u_{q_{L}}^{o}\left(t_{L}\right)\right) \hfill
\\
\underset{\eqref{LambdaHatDefinition}}{=}\hat{\lambda}_{q_{L}}^{o}\left(t_{L}+\right)^{T}\hat{f}_{q_{L}}\left(\hat{x}_{q_{L}}^{o}\left(t_{L}\right),u_{q_{L}}^{o}\left(t_{L}\right)\right) \hfill
\\
=l_{q_{L}}\left(x_{q_{L}}^{o}\left(t_{L}\right),u_{q_{L}}^{o}\left(t_{L}\right)\right)+\lambda_{q_{L}}^{o}\left(t_{L}+\right)^{T}f_{q_{L}}\left(x_{q_{L}}^{o}\left(t_{L}\right),u_{q_{L}}^{o}\left(t_{L}\right)\right)
\\
=H_{q_{L}}\left(t_{L}+\right),
\end{multline}
which is equivalent to \eqref{Hamiltonian jump}.

\subsection{Other locations}
\label{subsec:Other}
We now consider a needle variation at a general Lebesgue time $t \in \left(t_{n-1},t_n\right)$ in the form of
\begin{equation}
u^{\epsilon}\left(\tau\right)=\left\{ \begin{array}{lcll}
u_{q_{j-1}}^{o}\left(\tau\right), &  & \tau\in\left[t_{j-1},t_{j}\right), & \hspace{-17pt} 1\leq j\leq n-1,\\
u_{q_{n-1}}^{o}\left(\tau\right), &  & \tau\in\left[t_{n-1},t-\epsilon\right),\\
v, &  & \tau\in\left[t-\epsilon,t\right),\\
u_{q_{n-1}}^{o}\left(\tau\right), &  & \tau\in\left[t,t_{n}-\delta_{n}^{\epsilon}\right),\\
u_{q_{n}}^{o}\left(t_{n}\right), &  & \tau\in\left[t_{n}-\delta_{n}^{\epsilon},t_{n}\right),\\
u_{q_{k}}^{o}\left(\tau\right), &  & \tau\in\left[t_{k},t_{k+1}-\delta_{k+1}^{\epsilon}\right), & n\leq k\leq L,\\
u_{q_{k+1}}^{o}\left(t_{k+1}\right), \hspace{-5pt} &  & \tau\in\left[t_{k+1}-\delta_{k+1}^{\epsilon},t_{k+1}\right), & n\leq k<L.
\end{array}\right.
\label{NeedleVariationN-1}
\end{equation}

As before,
\begin{equation}
y_{q_{n-1}}\left(t_{n}-\right)
=\Phi_{q_{n-1}}^{\left(t_{n},t\right)}\left[\hat{f}{}_{q_{n-1}}\left(\hat{x}_{q_{n-1}}^{\epsilon\left(t\right)},v\right)-\hat{f}{}_{q_{n-1}}\left(\hat{x}_{q_{n-1}}^{o\left(t\right)},u_{q_{n-1}}^{o\left(t\right)}\right)\right],
\end{equation}
and
\begin{multline}
y_{q_{n}}\left(t_{n}\right)
\\
={\Bigg[}\frac{\partial\hat{\xi}_{\sigma_{n}}}{\partial\hat{x}_{q_{n-1}}}
+\frac{1}{\left[\frac{\partial\hat{m}_{q_{n-1}q_{n}}}{\partial\hat{x}_{q_{n-1}}}\right]^{T}\hat{f}{}_{q_{n-1}}^{\left(t_{n}^{-}\right)}}\,\hat{f}_{q_{n},\hat{\xi}_{\sigma_{n}}}^{\hat{\xi}_{\sigma_{n}},q_{n-1}}\left[\frac{\partial\hat{m}_{q_{n-1}q_{n}}}{\partial\hat{x}_{q_{n-1}}}\right]^{T}{\Bigg]}y_{q_{n-1}}^{\left(t_{n}^{-}\right)}. \hspace{-8pt}
\end{multline}

Therefore,
\begin{multline}
y_{q_{\text{L}}}\left(t_{f}\right)
\\
=\prod_{k=L}^{n}\left[\Phi_{q_{k}}\left(t_{k+1},t_{k}\right)\frac{\partial\hat{\xi}_{\sigma_{k}}}{\partial\hat{x}_{q_{k-1}}}+\gamma_{k}\,\hat{f}_{q_{k},\hat{\xi}_{\sigma_{k}}}^{\hat{\xi}_{\sigma_{k}},q_{k-1}}\left[\frac{\partial\hat{m}_{q_{k-1}q_{k}}}{\partial\hat{x}_{q_{k-1}}}\right]^{T}\right]
\\
\Phi_{q_{n-1}}^{\left(t_{n},t\right)}\left[\hat{f}{}_{q_{n-1}}\left(\hat{x}_{q_{n-1}}^{\epsilon\left(t\right)},v\right)-\hat{f}{}_{q_{n-1}}\left(\hat{x}_{q_{n-1}}^{o\left(t\right)},u_{q_{n-1}}^{o\left(t\right)}\right)\right],
\end{multline}
where
\begin{multline}
\hat{f}_{q_{k},\hat{\xi}_{\sigma_{k}}}^{\hat{\xi}_{\sigma_{k}},q_{k-1}}:=\hat{f}_{q_{k}}\left(\hat{\xi}_{\sigma_{k}}\left(\hat{x}_{q_{k-1}}^{o}\left(t_{k}^{-}\right)\right),u_{q_{k}}^{o}\left(t_{k}\right)\right)
\\
-\frac{\partial\hat{\xi}_{\sigma_{k}}}{\partial\hat{x}_{q_{k-1}}}\left(\hat{x}_{q_{k-1}}^{o}\left(t_{k}^{-}\right)\right)\hat{f}_{q_{k-1}}\left(\hat{x}_{q_{k-1}}^{o}\left(t_{k}^{-}\right),u_{q_{k-1}}^{o}\left(t_{k}^{-}\right)\right),
\end{multline}
and
\begin{equation}
\gamma_{k}:=\begin{cases}
0, & \hfill \text{controlled switching,}
\\
\frac{1}{\left[\frac{\partial\hat{m}_{q_{k-1}q_{k}}}{\partial\hat{x}_{q_{k-1}}}\right]^{T}\hat{f}{}_{q_{k-1}}^{\left(t_{k}^{-}\right)}}, & \hfill \text{autonomous switching.}
\end{cases}
\end{equation}

The optimality condition \eqref{qLoptimalityTf} is expressed as
\begin{multline}
\left[\frac{\partial\hat{g}}{\partial\hat{x}_{q_{L}}}\right]^{T}\prod_{k=L}^{n}\left[\Phi_{q_{k}}^{\left(t_{k+1},t_{k}\right)}\frac{\partial\hat{\xi}_{\sigma_{k}}}{\partial\hat{x}_{q_{k-1}}}+\gamma_{k}\,\hat{f}_{q_{k},\hat{\xi}_{\sigma_{k}}}^{\hat{\xi}_{\sigma_{k}},q_{k-1}}\left[\frac{\partial\hat{m}_{q_{k-1}q_{k}}}{\partial\hat{x}_{q_{k-1}}}\right]^{T}\right]
\\
\Phi_{q_{n-1}}^{\left(t_{n},t\right)}\left[\hat{f}{}_{q_{n-1}}\left(\hat{x}_{q_{n-1}}^{\epsilon\left(t\right)},v\right)-\hat{f}{}_{q_{n-1}}\left(\hat{x}_{q_{n-1}}^{o\left(t\right)},u_{q_{n-1}}^{o\left(t\right)}\right)\right]\geq0.
\label{PreHamiltonianBeforeManySwitchings}
\end{multline}

Setting
\begin{multline}
{\left.\hat{\lambda}_{q_{n-1}}^{o}\right.}^{T}\left(t\right)
=\left[\frac{\partial\hat{g}}{\partial\hat{x}_{q_{L}}}\right]^{T}\prod_{k=L}^{n}\scalebox{1.2}{\Bigg[}\Phi_{q_{k}}^{\left(t_{k+1},t_{k}\right)}\frac{\partial\hat{\xi}_{\sigma_{k}}}{\partial\hat{x}_{q_{k-1}}}
\\
+\gamma_{k}\,\hat{f}_{q_{k},\hat{\xi}_{\sigma_{k}}}^{\hat{\xi}_{\sigma_{k}},q_{k-1}}\left[\frac{\partial\hat{m}_{q_{k-1}q_{k}}}{\partial\hat{x}_{q_{k-1}}}\right]^{T}\scalebox{1.2}{\Bigg]}\Phi_{q_{n-1}}^{\left(t_{n},t\right)},
\end{multline}
for $t\in \left[t_{n-1},t_n\right]$, which is equivalent to 
\begin{multline}
\hat{\lambda}_{q_{n-1}}^{o}\left(t\right)=\left[\Phi_{q_{n-1}}^{\left(t_{n},t\right)}\right]^{T}\prod_{k=n}^{L}\scalebox{1.2}{\Bigg[}\left[\frac{\partial\hat{\xi}_{\sigma_{k}}}{\partial\hat{x}_{q_{k-1}}}\right]^{T}\left[\Phi_{q_{k}}^{\left(t_{k+1},t_{k}\right)}\right]^{T}
\\
\hfill+\gamma_{k}\frac{\partial\hat{m}_{q_{k-1}q_{k}}}{\partial\hat{x}_{q_{k-1}}}\left[\hat{f}_{q_{k},\hat{\xi}_{\sigma_{k}}}^{\hat{\xi}_{\sigma_{k}},q_{k-1}}\right]^{T}\scalebox{1.2}{\Bigg]}\frac{\partial\hat{g}}{\partial\hat{x}_{q_{L}}}
\\
=\left[\Phi_{q_{n-1}}^{\left(t_{n},t\right)}\right]^{T}\scalebox{1.2}{\Bigg[}\left[\frac{\partial\hat{\xi}_{\sigma_{n}}}{\partial\hat{x}_{q_{n-1}}}\right]^{T}\left[\Phi_{q_{n}}\left(t_{n+1},t_{n}\right)\right]^{T} \hfill
\\
+\gamma_{n}\frac{\partial\hat{m}_{q_{n-1}q_{n}}}{\partial\hat{x}_{q_{n-1}}}\left[\hat{f}_{q_{n},\hat{\xi}_{\sigma_{n}}}^{\hat{\xi}_{\sigma_{n}},q_{n-1}}\right]^{T}\scalebox{1.2}{\Bigg]}\prod_{k=n+1}^{L}\scalebox{1.2}{\Bigg[}\left[\frac{\partial\hat{\xi}_{\sigma_{k}}}{\partial\hat{x}_{q_{k-1}}}\right]^{T}\left[\Phi_{q_{k}}^{\left(t_{k+1},t_{k}\right)}\right]^{T}
\\
+\gamma_{k}\frac{\partial\hat{m}_{q_{k-1}q_{k}}}{\partial\hat{x}_{q_{k-1}}}\left[\hat{f}_{q_{k},\hat{\xi}_{\sigma_{k}}}^{\hat{\xi}_{\sigma_{k}},q_{k-1}}\right]^{T}\scalebox{1.2}{\Bigg]}\frac{\partial\hat{g}}{\partial\hat{x}_{q_{L}}},\label{LambdaHatGeneralDefinition}
\end{multline}
we may evaluate \eqref{LambdaHatGeneralDefinition} at $t=t_n$ to obtain
\begin{multline}
\hat{\lambda}_{q_{n-1}}^{o\left(t_{n}\right)}=\left[\left[\frac{\partial\hat{\xi}_{\sigma_{n}}}{\partial\hat{x}_{q_{n-1}}}\right]^{T}\left[\Phi_{q_{n}}^{\left(t_{n+1},t_{n}\right)}\right]^{T}+\gamma_{n}\frac{\partial\hat{m}_{q_{n-1}q_{n}}}{\partial\hat{x}_{q_{n-1}}}\left[\hat{f}_{q_{n},\hat{\xi}_{\sigma_{n}}}^{\hat{\xi}_{\sigma_{n}},q_{n-1}}\right]^{T}\right]
\\
\hfill\prod_{k=n+1}^{L}\left[\left[\frac{\partial\hat{\xi}_{\sigma_{k}}}{\partial\hat{x}_{q_{k-1}}}\right]^{T}\left[\Phi_{q_{k}}^{\left(t_{k+1},t_{k}\right)}\right]^{T}+\gamma_{k}\frac{\partial\hat{m}_{q_{k-1}q_{k}}}{\partial\hat{x}_{q_{k-1}}}\left[\hat{f}_{q_{k},\hat{\xi}_{\sigma_{k}}}^{\hat{\xi}_{\sigma_{k}},q_{k-1}}\right]^{T}\right]\frac{\partial\hat{g}}{\partial\hat{x}_{q_{L}}},\hspace{-9pt}
\end{multline}
or
\begin{multline}
\hspace{-9pt}
\hat{\lambda}_{q_{n-1}}^{o\left(t_{n}\right)}=\left[\frac{\partial\hat{\xi}_{\sigma_{n}}}{\partial\hat{x}_{q_{n-1}}}\right]^{T}\left[\Phi_{q_{n}}^{\left(t_{n+1},t_{n}\right)}\right]^{T}\prod_{k=n+1}^{L}\scalebox{1.2}{\Bigg[}\left[\frac{\partial\hat{\xi}_{\sigma_{k}}}{\partial\hat{x}_{q_{k-1}}}\right]^{T}\left[\Phi_{q_{k}}^{\left(t_{k+1},t_{k}\right)}\right]^{T}
\\
+\gamma_{k}\frac{\partial\hat{m}_{q_{k-1}q_{k}}}{\partial\hat{x}_{q_{k-1}}}\left[\hat{f}_{q_{k},\hat{\xi}_{\sigma_{k}}}^{\hat{\xi}_{\sigma_{k}},q_{k-1}}\right]^{T}\scalebox{1.2}{\Bigg]}\frac{\partial\hat{g}}{\partial\hat{x}_{q_{L}}}+\gamma_{n}\frac{\partial\hat{m}_{q_{n-1}q_{n}}}{\partial\hat{x}_{q_{n-1}}}\left[\hat{f}_{q_{n},\hat{\xi}_{\sigma_{n}}}^{\hat{\xi}_{\sigma_{n}},q_{n-1}}\right]^{T} \hfill
\\
\prod_{k=n+1}^{L}\left[\left[\frac{\partial\hat{\xi}_{\sigma_{k}}}{\partial\hat{x}_{q_{k-1}}}\right]^{T}\left[\Phi_{q_{k}}^{\left(t_{k+1},t_{k}\right)}\right]^{T}+\gamma_{k}\frac{\partial\hat{m}_{q_{k-1}q_{k}}}{\partial\hat{x}_{q_{k-1}}}\left[\hat{f}_{q_{k},\hat{\xi}_{\sigma_{k}}}^{\hat{\xi}_{\sigma_{k}},q_{k-1}}\right]^{T}\right]\frac{\partial\hat{g}}{\partial\hat{x}_{q_{L}}}. \hspace{-9pt}
\label{LambdaHatGeneralBC}
\end{multline}

Having established \eqref{LambdaHatBC}, we take the (backward) induction hypothesis as
\begin{multline}
\hat{\lambda}_{q_{n}}^{o}\left(\tau\right)=\Big[\Phi_{q_{n}}\left(t_{n+1},\tau\right)\Big]^{T}\prod_{k=n+1}^{L}\scalebox{1.2}{\Bigg[}\left[\frac{\partial\hat{\xi}_{\sigma_{k}}}{\partial\hat{x}_{q_{k-1}}}\right]^{T}\Big[\Phi_{q_{k}}\left(t_{k+1},t_{k}\right)\Big]^{T}
\\
+\gamma_{k}\frac{\partial\hat{m}_{q_{k-1}q_{k}}}{\partial\hat{x}_{q_{k-1}}}\left[\hat{f}_{q_{k},\hat{\xi}_{\sigma_{k}}}^{\hat{\xi}_{\sigma_{k}},q_{k-1}}\right]^{T}\scalebox{1.2}{\Bigg]}\frac{\partial\hat{g}}{\partial\hat{x}_{q_{L}}},
\label{LambdaInductionHypothesis}
\end{multline}
and denote the scalar product
\begin{multline}
p_{n}:=\gamma_{n}\left[\hat{f}_{q_{n},\hat{\xi}_{\sigma_{n}}}^{\hat{\xi}_{\sigma_{n}},q_{n-1}}\right]^{T}\prod_{k=n+1}^{L}\scalebox{1.2}{\Bigg[}\left[\frac{\partial\hat{\xi}_{\sigma_{k}}}{\partial\hat{x}_{q_{k-1}}}\right]^{T}\left[\Phi_{q_{k}}\left(t_{k+1},t_{k}\right)\right]^{T}
\\
+\gamma_{k}\frac{\partial\hat{m}_{q_{k-1}q_{k}}}{\partial\hat{x}_{q_{k-1}}}\left[\hat{f}_{q_{k},\hat{\xi}_{\sigma_{k}}}^{\hat{\xi}_{\sigma_{k}},q_{k-1}}\right]^{T}\scalebox{1.2}{\Bigg]}\frac{\partial\hat{g}}{\partial\hat{x}_{q_{L}}}.
\end{multline}

Then equation \eqref{LambdaHatGeneralBC} becomes
\begin{equation}
\hat{\lambda}_{q_{n-1}}^{o}\left(t_{n}\right)= \left[\frac{\partial\hat{\xi}_{\sigma_{n}}}{\partial\hat{x}_{q_{n-1}}}\right]^{T} \hat{\lambda}_{q_{n}}^{o}\left(t_{n}+\right)+p_{n}\frac{\partial\hat{m}_{q_{n-1}q_{n}}}{\partial\hat{x}_{q_{n-1}}}.
\label{LambdaInductiveStep}
\end{equation}

Since the induction hypothesis \eqref{LambdaInductionHypothesis} is proved to hold as \eqref{LambdaHatBC} for $n=L-1$, and since \eqref{LambdaInductionHypothesis} for $n$ implies \eqref{LambdaInductiveStep}, the boundary condition \eqref{LambdaBC} is deduced from \eqref{LambdaInductiveStep} in a similar way as shown in \eqref{ExtendedJumpExpansion} to \eqref{DerivedLambdaBCforL-1}, i.e. \eqref{LambdaInductiveStep} is equivalent to
\begin{multline}
\hat{\lambda}_{q_{n-1}}^{o}\left(t_{n}\right)\equiv\left[\begin{array}{c}
\lambda_{q_{n-1},0}^{o}\left(t_{n}\right)\\
\lambda_{q_{n-1}}^{o}\left(t_{n}\right)
\end{array}\right]
\\
=\left[\begin{array}{cccc}
1 & 0 & \cdots & 0\\
\frac{\partial c}{\partial x_{1}} & \frac{\partial\xi_{1}}{\partial x_{1}} & \cdots & \frac{\partial\xi_{n}}{\partial x_{1}}\\
\vdots & \vdots & \ddots & \vdots\\
\frac{\partial c}{\partial x_{n}} & \frac{\partial\xi_{1}}{\partial x_{n}} & \cdots & \frac{\partial\xi_{n}}{\partial x_{n}}
\end{array}\right]_{\sigma_n} \left[\begin{array}{c}
\lambda_{q_{n},0}^{o}\left(t_{n}+\right)\\
\lambda_{q_{n}}^{o}\left(t_{n}+\right)
\end{array}\right]+p\left[\begin{array}{c}
0\\
\nabla m
\end{array}\right].
\end{multline}

This gives
\begin{align}
\lambda_{q_{n-1},0}^{o}\left(t_{n}\right)&=1,
\\
\lambda_{q_{n-1}}^{o}\left(t_{n}\right)&=\nabla\xi^{T}\lambda_{q_{n}}^{o}\left(t_{n}+\right)+\nabla c_{\sigma_{n}}+p\nabla m_{q_{n-1}q_{n}}\,.
\label{DerivedLambdaBCforL-1Autonomous}
\end{align}

Differentiating \eqref{LambdaHatGeneralDefinition} with respect to $t$ leads to
\begin{equation}
\frac{d}{dt}\hat{\lambda}_{q_{n-1}}^{o}\left(t\right)=-\left(\frac{\partial\hat{f}_{q_{n-1}}}{\partial\hat{x}_{q_{n-1}}}\left(\hat{x}_{q_{n-1}}^{o}\left(t\right),u_{q_{n-1}}^{o}\left(t\right)\right)\right)^{T}\hat{\lambda}_{q_{n-1}}^{o}\left(t\right),
\end{equation}
which is equivalent to
\begin{align}
&\frac{d}{dt}\lambda_{q_{n-1},0}^{o}\left(t\right)=0, 
\\
&\frac{d}{dt}\lambda_{q_{n-1}}^{o}\left(t\right)=-\left(\frac{\partial l_{q_{n-1}}\left(x_{q_{n-1}}^{o},u_{q_{n-1}}^{o}\right)}{\partial x_{q_{n-1}}}\right)\lambda_{0}^{o}\left(t\right) 
\nonumber\\
&\hspace{55pt}  -\left(\frac{\partial f_{q_{n-1}}\left(x_{q_{n-1}}^{o},u_{q_{n-1}}^{o}\right)}{\partial x_{q_{n-1}}}\right)^{T}\lambda_{q_{n-1}}^{o}\left(t\right). &\label{LambdaDynamicsBeforeSwitchAutonomous}
\end{align}

Therefore, $\lambda_{q_{n-1},0}^{o}\left(t\right)=1$ for $t\in \left(t_{n-1},t_n\right)$ is obtained as before and 
\begin{equation}
\dot{\lambda}_{q_{n-1}}^{o}=-\frac{\partial H_{q_{n-1}}\left(x_{q_{n-1}}^{o},\lambda_{q_{n-1}}^{o},u_{q_{n-1}}^{o}\right)}{\partial x_{q_{n-1}}},
\end{equation}
holds for $t\in \left(t_{n-1},t_n\right)$ with the Hamiltonian defined as
\begin{multline}
H_{q_{n-1}}\left(x_{q_{n-1}},\lambda_{q_{n-1}},u_{q_{n-1}}\right)=l_{q_{n-1}}\left(x_{q_{n-1}},u_{q_{n-1}}\right)
\\
+\lambda_{q_{n-1}}^{T}f_{q_{n-1}}\left(x_{q_{n-1}},u_{q_{n-1}}\right).
\end{multline}

Also from \eqref{PreHamiltonianBeforeManySwitchings} the minimization of the Hamiltonian is concluded, i.e.
\begin{equation}
H_{q_{n-1}}\left(x_{q_{n-1}},\lambda_{q_{n-1}},u_{q_{n-1}}\right)\leq H_{q_{n-1}}\left(x_{q_{n-1}},\lambda_{q_{n-1}},v\right),
\end{equation}
for all $v\in U_{q_{n-1}}$. 

Evaluating both $H_{q_{n-1}}$ and $H_{q_n}$ at $t_n$ gives
\begin{multline}
\hspace{-6pt} H_{q_{n-1}}\left(t_{n}-\right)=l_{q_{n-1}}\left(x_{q_{n-1}}\left(t_{n}-\right),u_{q_{n-1}}\left(t_{n}-\right)\right)
\\
\hfill +\lambda_{q_{n-1}}\left(t_{n}-\right)^{T}f_{q_{n-1}}\left(x_{q_{n-1}}\left(t_{n}-\right),u_{q_{n-1}}\left(t_{n}-\right)\right)
\\
=\left[\hat{\lambda}_{q_{n-1}}^{o}\left(t_{n}-\right)\right]^{T}\hat{f}_{q_{n-1}}\left(\hat{x}_{q_{n-1}}\left(t_{n}-\right),u_{q_{n-1}}^{o}\left(t_{n}-\right)\right) \hfill
\\
=\left[\frac{\partial\hat{\xi}\left(\hat{x}_{q_{n-1}}^{o\left(t_{n}^{-}\right)}\right)}{\partial\hat{x}_{q_{n-1}}}^{T}\hat{\lambda}_{q_{n}}^{o\left(t_{n}^{+}\right)}+p_{n}\frac{\partial\hat{m}\left(\hat{x}_{q_{n-1}}^{o\left(t_{n}^{-}\right)}\right)}{\partial\hat{x}_{q_{n-1}}}\right]^{T}\hat{f}_{q_{n-1}}^{\left(t_{n}^{-}\right)} \hfill
\\
=\left[\hat{\lambda}_{q_{n}}^{o}{}_{\left(t_{n}^{+}\right)}^T\frac{\partial\hat{\xi}}{\partial\hat{x}_{q_{n-1}}}+\gamma_{n}\left[\hat{f}_{q_{n},\hat{\xi}_{\sigma_{n}}}^{\hat{\xi}_{\sigma_{n}},q_{n-1}}\right]^{T}\hat{\lambda}_{q_{n}}^{o\left(t_{n}^{+}\right)}\left[\frac{\partial\hat{m}_{q_{n-1}q_{n}}}{\partial\hat{x}_{q_{n-1}}}\right]^{T}\right]\hat{f}_{q_{n-1}}^{\left(t_{n}-\right)} \hfill
\\
=\hat{\lambda}_{q_{n}\left(t_{n}^{+}\right)}^{o\;T}\frac{\partial\hat{\xi}}{\partial\hat{x}_{q_{n-1}}}\hat{f}_{q_{n-1}}^{\left(t_{n}^{-}\right)}+\frac{\left[\hat{f}_{q_{n},\hat{\xi}_{\sigma_{n}}}^{\hat{\xi}_{\sigma_{n}},q_{n-1}}\right]^{T}\hat{\lambda}_{q_{n}}^{o\left(t_{n}^{+}\right)}}{\left[\frac{\partial\hat{m}_{q_{n-1}q_{n}}}{\partial\hat{x}_{q_{n-1}}}\right]^{T}\hat{f}_{q_{n-1}}^{\left(t_{n}^{-}\right)}}\left[\frac{\partial\hat{m}_{q_{n-1}q_{n}}}{\partial\hat{x}_{q_{n-1}}}\right]^{T}\hat{f}_{q_{n-1}}^{\left(t_{n}^{-}\right)} \hfill
\\
=\hat{\lambda}_{q_{n}}^{o}{}_{\left(t_{n}^{+}\right)}^{T}\frac{\partial\hat{\xi}_{\sigma_{n}}}{\partial\hat{x}_{q_{n-1}}}\hat{f}_{q_{n-1}}^{\left(t_{n}^{-}\right)}+\hat{\lambda}_{q_{n}}^{o}{}_{\left(t_{n}^{+}\right)}^{T}\left[\hat{f}_{q_{n}}^{\left(t_{n}\right)}-\frac{\partial\hat{\xi}_{\sigma_{n}}}{\partial\hat{x}_{q_{n-1}}}\hat{f}_{q_{n-1}}^{\left(t_{n}^{-}\right)}\right] \hfill
\\
=\hat{\lambda}_{q_{n}}^{o}\left(t_{n}+\right)^{T}\hat{f}_{q_{n}}\left(\hat{x}_{q_{n}}^{o}\left(t_{n}\right),u_{q_{n}}^{o}\left(t_{n}\right)\right) \hfill
\\
=l_{q_{n}}\left(x_{q_{n}}^{o}\left(t_{n}\right),u_{q_{n}}^{o}\left(t_{n}\right)\right)+\lambda_{q_{n}}^{o}\left(t_{n}+\right)^{T}f_{q_{n}}\left(x_{q_{n}}^{o}\left(t_{n}\right),u_{q_{n}}^{o}\left(t_{n}\right)\right) \hfill
\\
=H_{q_{n}}\left(t_{n}+\right),
\end{multline}
which is equivalent to \eqref{Hamiltonian jump}. This completes the proof of the Hybrid Minimum Principle.
\end{proof}

\section{Time-Varying Vector Fields, Costs, and Switching Manifolds}
\label{sec:HMPTV}
For simplicity of the notation, the statement and the proof of the Hybrid Minimum Principle in Theorem \ref{theorem:HMP} were presented for the case of time-invariant vector fields, time-invariant running, switching and terminal costs, and time-invariant switching manifolds. It is, however, no loss of generality as time-varying hybrid optimal control problems can be converted to time-invariant problems by the extension of states, vector fields, etc. as
\begin{equation}
\tilde{x}_{q}:=\left[\begin{array}{c}
\theta\\
\hat{x}_{q}
\end{array}\right]\equiv\left[\begin{array}{c}
\theta\\
z_{q}\\
x_{q}
\end{array}\right],
\end{equation}
resulting in augmented vector fields as
\begin{equation}
\dot{\tilde{x}}_{q}=\tilde{f}_{q}\left(\tilde{x}_{q},u_{q}\right):=\left[\begin{array}{c}
1\\
\hat{f}_{q}\left(\hat{x}_{q},u_{q},t\right)
\end{array}\right]\equiv\left[\begin{array}{c}
1\\
l_{q}\left(x,u,\theta\right)\\
f_{q}\left(x,u,\theta\right)
\end{array}\right],
\end{equation}
subject to the initial condition
\begin{equation}
\tilde{h}_{0}=\left(q_{0},\hat{x}_{q_{0}}\left(t_{0}\right)\right)=\left(q_{0},\left[\begin{array}{c}
t_{0}\\
0\\
x_{0}
\end{array}\right]\right),
\end{equation}
with the switching manifold
\begin{equation}
\tilde{m}\left(\tilde{x}\right) := m\left(x,\theta\right),
\end{equation}
and the extended jump
function defined as
\begin{equation}
\tilde{x}_{q_{j}}\left(t_{j}\right)=\tilde{\xi}_{\sigma_{j}}\left(\tilde{x}_{q_{j-1}}\left(t_{j}-\right)\right):=\left[\begin{array}{c}
\theta\left(t_{j}-\right)\\
z\left(t_{j}-\right)+c\left(x\left(t_{j}-\right),\theta\right)\\
\xi_{\sigma_{j}}\left(x\left(t_{j}-\right),\theta\right)
\end{array}\right].
\end{equation}

\begin{corollary}
For the case of time-varying vector fields, costs, and switching manifolds, the statement of Theorem \ref{theorem:HMP} holds with the definition of the family of system Hamiltonians as 
\begin{equation}
H_{q}\left(x_{q},\lambda_{q},u_{q},t\right)= l_{q}\left(x_{q},u_{q},t\right)+\lambda_{q}^{T}\, f_{q}\left(x_{q},u_{q},t\right),
\end{equation}
i.e. Equations \eqref{StateDynamics}-\eqref{HminWRTu} hold and the Hamiltonian
boundary conditions \eqref{Hamiltonian jump} become
\begin{multline}
H_{q_{j-1}}\left(t_{j}-\right)\equiv H_{q_{j-1}}\left(x_{q_{j-1}}^{o},\lambda_{q_{j-1}}^{o},u_{q_{j-1}}^{o},t_{j}^{o}-\right)
\\
=H_{q_{j}}\left(x_{q_{j}}^{o},\lambda_{q_{j}}^{o},u_{q_{j}}^{o},t_{j}^{o}+\right)+\frac{\partial c_{\sigma_{j}}}{\partial t}+p_{j}\frac{\partial m_{q_{j-1}q_{j}}}{\partial t}
\\
\equiv H_{q_{j}}\left(t_{j}+\right) +\frac{\partial c_{\sigma_{j}}}{\partial t}+p_{j}\frac{\partial m_{q_{j-1}q_{j}}}{\partial t}.
\end{multline}
\hfill $\square$
\end{corollary}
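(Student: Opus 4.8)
The plan is to derive this corollary from the time-invariant Theorem \ref{theorem:HMP} by adjoining the clock variable $\theta$, exactly as in the construction displayed above: the augmented system $\tilde{\mathbb{H}}$ with state $\tilde x_q=(\theta,x_q)$, vector field $\tilde f_q(\tilde x_q,u_q)=(1,f_q(x_q,u_q,\theta))$, switching manifolds $\tilde m(\tilde x)=m(x,\theta)$ and jump maps $\tilde\xi_{\sigma_j}(\tilde x)=(\theta,\xi_{\sigma_j}(x,\theta))$ is autonomous, and the original data become the time-invariant running cost $l_q(x,u,\theta)$, switching cost $c_{\sigma_j}(x,\theta)$ and terminal cost $g(x)$. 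Once $\tilde{\mathbb{H}}$ is checked to satisfy A0--A3, Theorem \ref{theorem:HMP} applies to it and the corollary is read off componentwise.

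First I would verify that $\tilde{\mathbb{H}}$ meets A0--A3. Appending the constant component $1$ preserves the $C^{k_{f_q}}$ and joint uniform Lipschitz properties as soon as the time-varying fields $f_q(\cdot,\cdot,t)$ have them (this is exactly the regularity one imposes in the time-varying setting); the augmented manifolds $\{\tilde m=0\}$ remain smooth codimension-$k$ submanifolds since $\nabla_{(\theta,x)}\tilde m\neq 0$ whenever $\nabla_x m\neq 0$; A1 lifts trivially; and the transversality/non-termination condition at a switching becomes $\nabla\tilde m\cdot\tilde f_q=\partial m/\partial t+\nabla_x m\cdot f_q=\tfrac{d}{dt}m(x(t),t)\neq0$, i.e. precisely the natural transversality requirement for a moving switching surface. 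So Theorem \ref{theorem:HMP} is available for $\tilde{\mathbb{H}}$.

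Next I would apply Theorem \ref{theorem:HMP} to the time-invariant Bolza HOCP for $\tilde{\mathbb{H}}$ and decompose the adjoint as $\tilde\lambda_q=(\lambda_{\theta,q},\lambda_q)$. The augmented Hamiltonian is $\tilde H_q(\tilde x,\tilde\lambda,u)=\tilde\lambda_q^{T}\tilde f_q+l_q=\lambda_{\theta,q}+\lambda_q^{T}f_q(x,u,\theta)+l_q(x,u,\theta)=\lambda_{\theta,q}+H_q(x,\lambda_q,u,t)$ with $H_q$ the time-varying Hamiltonian of the corollary; since $\lambda_{\theta,q}$ enters additively on both sides, the Hamiltonian minimization of Theorem \ref{theorem:HMP} collapses to \eqref{HminWRTu}. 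The $x$-block of the canonical equations and of the terminal and switching conditions reproduces \eqref{StateDynamics}, \eqref{lambda dynamics}, \eqref{StateIC}, \eqref{StateBC}, \eqref{LambdaTerminal}, \eqref{LambdaBC} verbatim (the $\theta$-row of $\nabla\tilde\xi_{\sigma_j}$ being $(1,0)$ and that of $\nabla\tilde m$ contributing nothing to the $x$-components), with $p$ identified as the multiplier $\tilde p_j$ of $\tilde m$ in the time-invariant statement. The only new object is the clock-adjoint $\lambda_{\theta,q}$: from the canonical equations $\dot\lambda_{\theta,q}=-\partial\tilde H_q/\partial\theta=-\partial H_q/\partial t$, from the terminal condition $\lambda_{\theta,q_L}(t_f)=\partial g/\partial\theta=0$, and from the $\theta$-row of the augmented adjoint jump relation $\tilde\lambda_{q_{j-1}}(t_j-)=\nabla\tilde\xi_{\sigma_j}^{T}\tilde\lambda_{q_j}(t_j+)+\nabla\tilde c_{\sigma_j}+\tilde p_j\nabla\tilde m$ the jump of $\lambda_{\theta,q}$ across $t_j$ is seen to be built out of $\partial c_{\sigma_j}/\partial t$ and $\partial m_{q_{j-1}q_j}/\partial t$ (plus $(\partial\xi_{\sigma_j}/\partial t)^{T}\lambda_{q_j}(t_j+)$ if the jump map itself is time-dependent).

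Finally I would combine this with the Hamiltonian-continuity identity that Theorem \ref{theorem:HMP} delivers for the \emph{autonomous} system $\tilde{\mathbb{H}}$, namely $\tilde H_{q_{j-1}}(t_j-)=\tilde H_{q_j}(t_j+)$, i.e. $\lambda_{\theta,q_{j-1}}(t_j-)+H_{q_{j-1}}(t_j-)=\lambda_{\theta,q_j}(t_j+)+H_{q_j}(t_j+)$; cancelling the $\lambda_{\theta}$ terms after inserting the clock-adjoint jump relation yields the stated boundary condition relating $H_{q_{j-1}}(t_j-)$ and $H_{q_j}(t_j+)$ through $\partial c_{\sigma_j}/\partial t$ and $p_j\,\partial m_{q_{j-1}q_j}/\partial t$, while \eqref{StateDynamics}--\eqref{LambdaBC} have already been obtained from the $x$-block. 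I expect the work to lie almost entirely in the bookkeeping of this reduction — tracking transposes and the exact block (and sign) structure of $\nabla\tilde\xi_{\sigma_j}$, $\nabla\tilde m$ and the $\theta$-component of $\tilde\lambda$ as the single extra coordinate is threaded through — together with the care needed to confirm that the transversality and non-degeneracy hypotheses of Theorem \ref{theorem:HMP} are genuinely inherited by $\tilde{\mathbb{H}}$ rather than tacitly assumed; comparison with the classical interior-point (Weierstrass--Erdmann-type) corner conditions provides a useful check on the resulting Hamiltonian boundary condition.
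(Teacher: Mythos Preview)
Your proposal is correct and follows exactly the route the paper takes: Section~\ref{sec:HMPTV} introduces the clock-augmented system $\tilde x_q=(\theta,\hat x_q)$ with $\dot\theta=1$, time-invariant data $\tilde f_q$, $\tilde m$, $\tilde\xi_{\sigma_j}$, and then states the corollary as an immediate consequence of Theorem~\ref{theorem:HMP} applied to this autonomous problem. The paper does not spell out the componentwise reduction you outline (tracking the clock-adjoint $\lambda_{\theta,q}$ and reading the modified Hamiltonian boundary condition off the $\theta$-row of the augmented adjoint jump together with $\tilde H$-continuity); your write-up is in fact more detailed than the paper's, which leaves that bookkeeping implicit.
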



\section{Analytic Example}
\label{sec:AnalyticExample}
\subsection{Problem Statement} 
Consider a hybrid system with the indexed vector fields:
\begin{align}
\dot{x} &=f_{q_1}\left(x,u\right)=x+x\, u,\label{Ex1f_1}
\\
\dot{x} &=f_{q_2}\left(x,u\right)=-x+x\, u,\label{Ex1f_2}
\vspace{-1mm}
\end{align}
and the hybrid optimal control problem
\begin{multline}
J\left(t_{0},t_{f},h_{0},1;I_1\right)
=\int_{t_0}^{t_{s}}\frac{1}{2}u^{2}dt+\frac{1}{1+\left[x\left(t_{s}-\right)\right]^{2}}
\\
+\int_{t_{s}}^{t_{f}}\frac{1}{2}u^{2}dt+\frac{1}{2}\left[x\left(t_{f}\right)\right]^{2},
\label{Ex1TotalCost}
\vspace{-1mm}
\end{multline}
subject to the initial condition $h_{0}=\left(q\left(t_{0}\right),x\left(t_{0}\right)\right)=\left(q_1,x_{0}\right)$ provided at the initial time $t_{0}=0$. At the controlled switching instant $t_s$, the boundary condition for the continuous state is provided by the jump map $x\left(t_{s}\right) =\xi\left(x\left(t_{s}-\right)\right)=-x\left(t_{s}-\right)$.

\subsection{The HMP Formulation}
Writing down the Hybrid Minimum Principle results for the above HOCP, the Hamiltonians are formed as
\begin{align}
H_{q_1} &=\frac{1}{2}u^{2}+\lambda\, x\left(u+1\right),\label{Ex1H1}
\\
H_{q_2} &=\frac{1}{2}u^{2}+\lambda\, x\left(u-1\right),\label{Ex1H2}
\end{align}
from which the minimizing control input for both Hamiltonian functions is determined as
\begin{equation}
u^{o}=-\lambda x \,. \label{Ex1u^o}
\end{equation}

Therefore, the adjoint process dynamics, determined from \eqref{lambda dynamics} and with the substitution of the optimal control input from \eqref{Ex1u^o}, is written as
\begin{align}
\hspace{-2mm} \dot{\lambda}= \frac{-\partial H_{q_1}}{\partial x} = -\lambda\left(u^{o}+1\right)=\lambda\left(\lambda\, x-1\right), &{}  & t \in \left(t_0,t_s\right] , \label{Ex1Lambda1Dynamics}
\\
\hspace{-2mm} \dot{\lambda}=\frac{-\partial H_{q_2}}{\partial x} =-\lambda\left(u^{o}-1\right)=\lambda\left(\lambda\, x+1\right), &{}  & t \in \left(t_s,t_f\right] , \label{Ex1Lambda2Dynamics}
\end{align}
which are subject to the terminal and boundary conditions
\begin{align}
\lambda\left(t_{f}\right) &=\left.\nabla g\right|_{x\left(t_{f}\right)}=x\left(t_{f}\right),\label{Ex1Lambda(t_f)}
\\
\lambda\left(t_{s}-\right)\equiv\lambda\left(t_{s}\right) &=\left.\nabla\xi\right|_{x\left(t_{s}-\right)}\lambda\left(t_{s}+\right)+\left.\nabla c\right|_{x\left(t_{s}-\right)}
\notag\\
&=-\lambda\left(t_{s}+\right)+\frac{-2x\left(t_{s}-\right)}{\left(1+\left[x\left(t_{s}-\right)\right]^{2}\right)^{2}} \;.
\label{Ex1Lambda(t_s)}
\end{align}

The substitution of the optimal control input \eqref{Ex1u^o} in the continuous state dynamics \eqref{StateDynamics} gives
\begin{align}
\dot{x} &=\frac{\partial H_{q_1}}{\partial \lambda} =x\left(1+u^{o}\right)=-x\left(\lambda\, x-1\right), &{}  & t \in \left[t_0,t_s\right) ,\label{Ex1X1Dynamics}
\\
\dot{x} &=\frac{\partial H_{q_2}}{\partial \lambda}=x\left(-1+u^{o}\right)=-x\left(\lambda\, x+1\right), &{}  & t \in \left[t_s,t_f\right) ,\label{Ex1X2Dynamics}
\end{align}
which are subject to the initial and boundary conditions
\begin{align}
x\left(t_0\right) & =x\left(0\right) =x_{0},\label{Ex1X0}
\\
x\left(t_{s}\right) &=\xi\left(x\left(t_{s}-\right)\right)=-x\left(t_{s}-\right) .\label{Ex1Xs}
\end{align}

The Hamiltonian continuity condition \eqref{Hamiltonian jump} states that
\begin{multline}
H_{q_1}\left(t_{s}-\right)=\frac{1}{2}\left[u^{o}\left(t_{s}-\right)\right]^{2}+\lambda\left(t_{s}-\right)x\left(t_{s}-\right)\left[u^{o}\left(t_{s}-\right)+1\right]
\\
=\frac{1}{2}\left[-\lambda\left(t_{s}-\right)x\left(t_{s}-\right)\right]^{2}
+\lambda\left(t_{s}-\right)x\left(t_{s}-\right)\left[-\lambda\left(t_{s}-\right)x\left(t_{s}-\right)+1\right]
\\
= H_{q_2}\left(t_{s}+\right)
=\frac{1}{2}\left[u^{o}\left(t_{s}+\right)\right]^{2}+\lambda\left(t_{s}+\right)x\left(t_{s}+\right)\left[u^{o}\left(t_{s}+\right)-1\right]
\\
=\frac{1}{2}\left[-\lambda\left(t_{s}+\right)x\left(t_{s}+\right)\right]^{2}
+\lambda\left(t_{s}+\right)x\left(t_{s}+\right)\left[-\lambda\left(t_{s}+\right)x\left(t_{s}+\right)-1\right], \hspace{-3mm}
\end{multline}
which can be written, using \eqref{Ex1Xs}, as
\begin{equation}
x\left(t_{s^-}\right)\left[\lambda\left(t_{s^-}\right)-\lambda\left(t_{s^+}\right)\right]
=\frac{1}{2}\left[x\left(t_{s^-}\right)\right]^{2}\left[\left[\lambda\left(t_{s^-}\right)\right]^{2}-\left[\lambda\left(t_{s^+}\right)\right]^{2}\right] .
\label{Ex1Hcontinuity}
\end{equation}

\begin{figure}
\begin{center}
\includegraphics[width=.98\columnwidth]{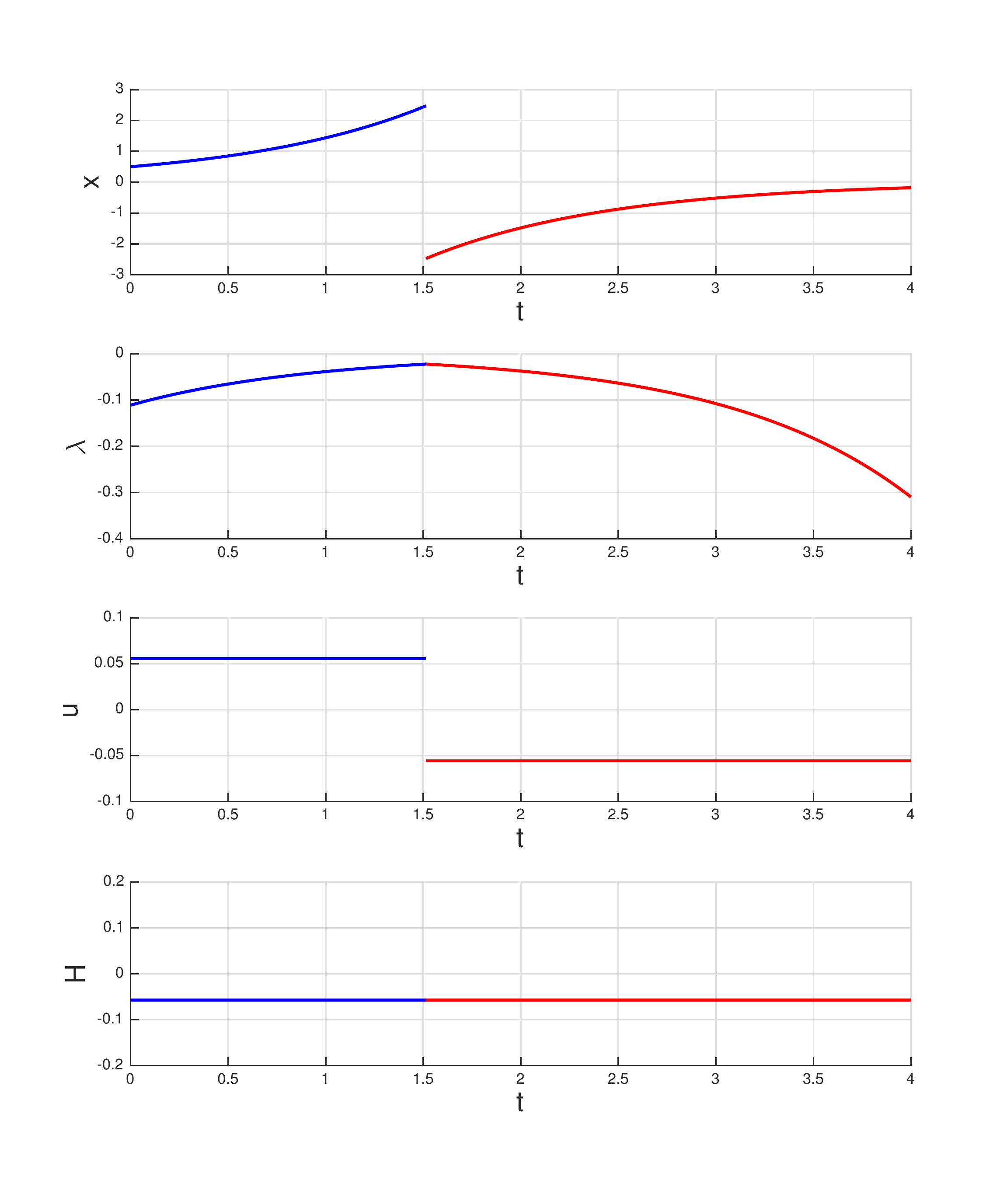}
\vspace{-20pt}
\caption{The optimal state and adjoint processes, optimal inputs and the Hamiltonians for the system in Example 1 with $x_{0}=0.5$ and $t_{f}=4$}
\end{center}
\label{Fig:Ex1IFAC2014}
\end{figure}

\subsection{The HMP Results}
The solution to the set of ODEs \eqref{Ex1Lambda1Dynamics}, \eqref{Ex1Lambda2Dynamics}, \eqref{Ex1X1Dynamics}, \eqref{Ex1X2Dynamics} together with the initial condition \eqref{Ex1X0} expressed at $t_0$, the terminal condition \eqref{Ex1Lambda(t_f)} determined at $t_f$ and the boundary conditions \eqref{Ex1Xs} and \eqref{Ex1Lambda(t_s)} provided at $t_s$ which is not a priori fixed but determined by the Hamiltonian continuity condition \eqref{Ex1Hcontinuity},  provides the optimal control input and its corresponding optimal trajectory that minimize the cost $J\left(t_{0},t_{f},h_{0},1;I_1\right)$ over $\bm{I_{1}}$, the family of hybrid inputs with one switching. The informativeness of the HMP results do not depend on the solution methodology for the ODE's and the associated boundary conditions. Interested readers are referred to \cite{APPECIFAC2014} for further analytic steps to reduce the above boundary value ODE problem into a set of algebraic equations using the special forms of the differential equations under study.

\section{Electric Vehicle with Transmission}

\label{sec:EVexample}

Consider the hybrid model of an electric vehicle equipped with a dual planetary transmission (presented in detail in \cite{APPEC2017NAHS}) with the hybrid automata diagram in Figure \ref{fig:HybridAutomata}. The discrete states $q_1, q_2, q_5, q_6$ correspond to fixed gear ratios while $q_3, q_4$ represent the system dynamics in transition between gears.

The set of vector fields $F$ is given as
\begin{align}
f_{q_1}\left(x,u\right) & =-A_1 x^2 + B_1 u -C_1 x - D_1 \,, \label{q1Vehicle}
\\
f_{q_2}\left(x,u\right) & =-A_2 x^2 + B_2 \frac{u}{x} - C_2 x - D_2 \,, \label{q2Vehicle}
\\
f_{q_3}^{\left(1\right)}\left(x,u\right) &= -A_{SS}x^{(1)}+A_{SR}x^{(2)}-A_{SA}\left(x^{(1)}+R_{2}x^{(2)}\right)^{2} \hfill  
\nonumber\\
& +B_{SM}^3 u^{\left(1\right)} +B_{SS}^3 u^{\left(2\right)} - B_{SR}^3 u^{\left(3\right)} -D_{SL} \,, 
\nonumber\\
f_{q_3}^{\left(2\right)}\left(x,u\right) &= A_{RS}x^{(1)}-A_{RR}x^{(2)}-A_{RA}\left(x^{(1)}+R_{2}x^{(2)}\right)^{2} \hfill 
\nonumber\\
& \hfill +B_{RM}^3 u^{\left(1\right)} - B_{RS}^3 u^{\left(2\right)} - B_{RR}^3 u^{\left(3\right)}-D_{RL} \,, 
\\
f_{q_4}^{\left(1\right)}\left(x,u\right)&= -A_{SS}x^{(1)}+A_{SR}x^{(2)}-A_{SA}\left(x^{(1)}+R_{2}x^{(2)}\right)^{2} \hfill 
\nonumber\\
& \hfill +B_{SM}\frac{u^{\left(1\right)}}{x^{(1)}+R_{1}x^{(2)}} +B_{SS} u^{\left(2\right)} - B_{SR} u^{\left(3\right)} -D_{SL} \,,
\nonumber\\
f_{q_4}^{\left(2\right)}\left(x,u\right)&= A_{RS}x^{(1)}-A_{RR}x^{(2)}-A_{RA}\left(x^{(1)}+R_{2}x^{(2)}\right)^{2} \hfill 
\nonumber\\
& \hspace{-40pt} +B_{RM} \left(1+R_{1}\right)\frac{u^{\left(1\right)}}{x^{(1)}+R_{1}x^{(2)}} - B_{RS} u^{\left(2\right)} + B_{RR} u^{\left(3\right)}-D_{RL} \,, \hspace{-4pt} \label{q4Vehicle}
\\
f_{q_5}\left(x,u\right) &= -A_5 x^2 + B_5 u -C_5 x - D_5 \,,
\\
f_{q_6}\left(x,u\right) &= -A_6 x^2 + B_6 \frac{u}{x} - C_6 x - D_6 \,, \label{q6Vehicle}
\end{align}
where $x_{q_1}, x_{q_2}, x_{q_5}, x_{q_6} \in \mathbb{R}$, $x_{q_3}, x_{q_4} \in \mathbb{R}^2$ are the continuous components of the hybrid state, with the notation $x_{q_i}^{(j)}$ used for denoting the $j^{\text{th}}$ component, and $u_{q_1}, u_{q_2}, u_{q_5}, u_{q_6}\in \left[-1,1\right] \subset \mathbb{R}$, $u_{q_3}, u_{q_4}\in \left[-1,1\right]^3 \subset \mathbb{R}^3$ are the continuous components of the hybrid input, with the coefficients on the right hand side of equations assumed to have deterministically known values.

Since the powertrain in the transition between gears operates with one more degree of freedom than fixed gear ratio modes, transitions to and from $q_3, q_4$ are accompanied by dimension changes. The set of jump transition maps $\Xi$ is identified by
\begin{align}
\xi_{q_{1}q_{2}}=\xi_{q_{2}q_{1}} &= \text{id}_{\mathbb{R}} \,, \label{EVstateJumpMapFirst}
\\
\xi_{q_{1}q_{3}}=\xi_{q_{2}q_{4}}&: \; x \rightarrow \left[\begin{array}{c} g_{tr}^1 x\\ 0 \end{array}\right] \,,
\\
\xi_{q_{3}q_{1}}=\xi_{q_{4}q_{2}}&: \; \left[\begin{array}{c} x^{(1)}\\ x^{(2)} \end{array}\right] \rightarrow \frac{x^{(1)}}{g_{tr}^1} \,,
\\
\xi_{q_{3}q_{4}}=\xi_{q_{4}q_{3}} &= \text{id}_{\mathbb{R}^2} \,,
\\
\xi_{q_{3}q_{5}}=\xi_{q_{4}q_{6}} &: \; \left[\begin{array}{c} x^{(1)}\\ x^{(2)} \end{array}\right] \rightarrow g_{tr}^2 x^{(2)} \,,
\\
\xi_{q_{5}q_{3}}=\xi_{q_{6}q_{4}} &: \; x  \rightarrow \left[\begin{array}{c} 0 \\ \frac{x}{g_{tr}^2} \end{array}\right] \,,
\\
\xi_{q_{5}q_{6}}=\xi_{q_{6}q_{5}} &= \text{id}_{\mathbb{R}} \,, \label{EVstateJumpMapLast}
\end{align}

While initiations of gear changing can be made freely (and therefore switchings to $q_3, q_4$ are controlled), the transitions back to a fixed gear mode require the full stop for one of the degrees of freedom. Moreover, switchings between torque-constrained and power-constrained modes occur when the motor speed reaches a certain value. The set of switching manifolds $\cal M$ for the autonomous switchings are given by
\begin{align}
m_{q_1 q_2} = m_{q_2 q_1} & \equiv x - k_1 = 0\,, \label{EVswitchingManifoldFirst}
\\
m_{q_3 q_1} = m_{q_4 q_2} &  \equiv x^{(2)} = 0 \,,
\\
m_{q_{3}q_{4}}=m_{q_{4}q_{3}} & \equiv x^{(1)}+R_{1}x^{(2)}-k_2=0 \,,
\\
m_{q_3 q_5} = m_{q_4 q_6} & \equiv x^{(1)}  = 0 \,,
\\
m_{q_5 q_6} = m_{q_6 q_5} & \equiv x - k_3 = 0\,, \label{EVswitchingManifoldLast}
\end{align}

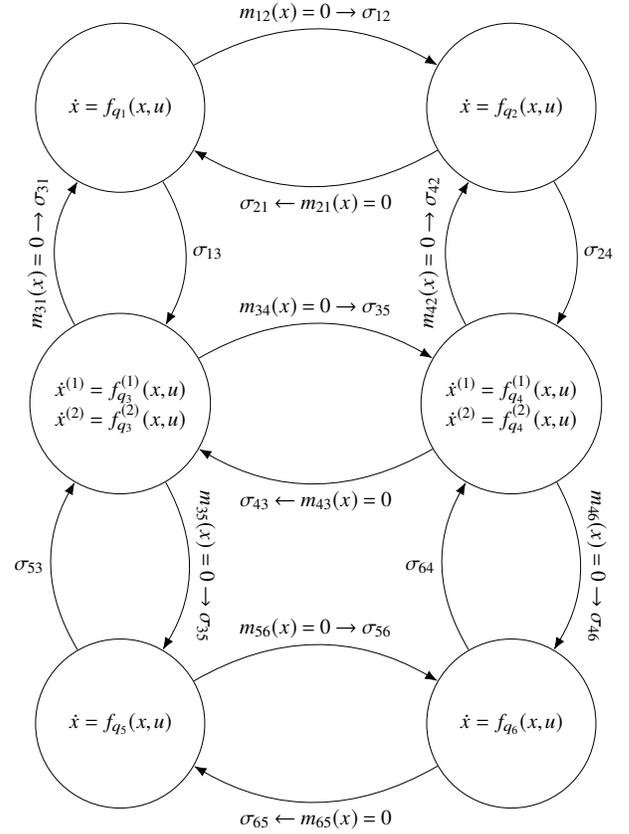
\begin{figure}
\centering
\scalebox{.8}{
\begin{tikzpicture}[node distance=65mm, auto]
  \node[state,minimum size=80pt] (s11)               {$\dot{x}=f_{q_1}(x,u)$};
  \node[state,minimum size=80pt] (s12) [right of=s11] {$\dot{x}=f_{q_2}(x,u)$};
 
  \node[state,minimum size=80pt](s21) [below=20mm of s11] {$\begin{array}{c} \dot{x}^{(1)}=f_{q_3}^{(1)}\left(x,u\right)\\ \dot{x}^{(2)}=f_{q_3}^{(2)}\left(x,u\right) \end{array}$};
  \node[state,minimum size=80pt]           (s22) [right of=s21]      {$\begin{array}{c} \dot{x}^{(1)}=f_{q_4}^{(1)}\left(x,u\right)\\ \dot{x}^{(2)}=f_{q_4}^{(2)}\left(x,u\right) \end{array}$};

  \node[state,minimum size=80pt] (s31) [below=24mm of s21] {$\dot{x}=f_{q_5}(x,u)$};
  \node[state,minimum size=80pt] (s32) [right of=s31]      {$\dot{x}=f_{q_6}(x,u)$};

 \path[-{Latex[scale=1.2]}]
            (s11) edge[bend left]  node {$m_{12}(x)=0 \rightarrow \sigma_{12}$} (s12)
            (s12) edge[bend left]  node {$\sigma_{21} \leftarrow m_{21}(x)=0$} (s11)
            
            (s11) edge[bend left]  node {$\sigma_{13}$} (s21)
            (s21) edge[bend left, sloped, anchor=center, above]  node {$m_{31}(x)=0 \rightarrow \sigma_{31}$} (s11)
            
            (s12) edge[bend left]  node {$\sigma_{24}$} (s22)
            (s22) edge[bend left, sloped, anchor=center, above]  node {$m_{42}(x)=0 \rightarrow \sigma_{42}$} (s12) 
            
            (s21) edge[bend left, sloped, anchor=center, above]  node {$m_{34}(x)=0 \rightarrow \sigma_{35}$} (s22)
            (s22) edge[bend left, sloped, anchor=center, below]  node {$\sigma_{43} \leftarrow m_{43}(x)=0$} (s21)
            
            (s31) edge[bend left, sloped, anchor=center, above]  node {$m_{56}(x)=0 \rightarrow \sigma_{56}$} (s32)
            (s32) edge[bend left, sloped, anchor=center, below]  node {$\sigma_{65} \leftarrow m_{65}(x)=0$} (s31)
            
            (s21) edge[bend left, sloped, anchor=center, above, rotate=179]  node {$m_{35}(x)=0 \rightarrow \sigma_{35}$} (s31)
            (s31) edge[bend left]  node {$\sigma_{53}$} (s21)
            
            (s22) edge[bend left, sloped, anchor=center, above, rotate=179]  node {$m_{46}(x)=0 \rightarrow \sigma_{46}$} (s32)
            (s32) edge[bend left]  node {$\sigma_{64}$} (s22);

\end{tikzpicture}
}

\caption{Hybrid Automata Diagram for an electric vehicle equipped with a dual planetary transmission} \label{fig:HybridAutomata}
\end{figure}

Now, consider the hybrid optimal control problem for the minimization of energy required for a certain task, e.g. starting from the stationary state, i.e. $h_0 \equiv \left(q,x\right)\left(t_0\right) = \left(q_1,0\right)$ with the sequence of discrete states given as $\left\{q_1, q_2, q_4, q_6\right\}$. Let the performance measure be determined by the sum of a terminal cost and the integral of the electric power consumption, i.e.
\begin{multline}
J\left(t_{0},t_{f},\left(q_{1},0\right),3;I_{3}\right)
=\int_{t_{0}}^{t_{s_{1}}}l_{q_{1}}\left(x,u\right)dt+\int_{t_{s_{1}}}^{t_{s_{2}}}l_{q_{2}}\left(x,u\right)dt
\\[-6pt]
+\int_{t_{s_{2}}}^{t_{s_{3}}}l_{q_{4}}\left(x,u\right)dt+\int_{t_{s_{3}}}^{t_{f}}l_{q_{6}}\left(x,u\right)dt + g\left(x\left(t_f\right)\right) \,,
\label{TotalEnergyEV}
\end{multline}
where the running costs $l_{q_i}$'s are the power consumption rates, determined from the motor efficiency map in \cite{APPEC2017NAHS} as
\begin{align}
l_{q_{1}}\left(x,u\right) &=a_{1}u^{2}+b_{1}xu+c_{1}u+d_{1}x\,,
\\
l_{q_{2}}\left(x,u\right) &=a_{2}\frac{u^{2}}{x^{2}}+b_{2}u+c_{2}\frac{u}{x}+d_{2}x \,,
\\
l_{q_{4}}\left(x,u\right) &=a_{4}\frac{\left(u^{\left(1\right)}\right)^{2}}{\left(x^{(1)}+R_{1}x^{(2)}\right)^{2}}+b_{4}u^{\left(1\right)}
\nonumber\\
&+c_{4}\frac{u^{\left(1\right)}}{x^{(1)}+R_{1}x^{(2)}}+d_{4}\left(x^{(1)}+R_{1}x^{(2)}\right)\,,
\\
l_{q_{6}}\left(x,u\right) &=a_{6}\frac{u^{2}}{x^{2}}+b_{6}u+c_{6}\frac{u}{x}+d_{6}x\,,
\\
g\left(x\left(t_f\right)\right) &= d_0 + d_1 x \left(t_f\right) + d_2 x\left(t_f\right)^2 \,.
\end{align}

\subsection*{The HMP Results:}

The Hybrid Minimum Principle in Section \ref{sec:HMP} can be used to identify the optimal input and the corresponding optimal trajectory for the performance measure \eqref{TotalEnergyEV}. Based on the HMP (details of the derivation are presented in the Appendix), 
optimal inputs are determined as
\begin{align}
&u_{q_{1}}^{o}\left(t\right)=\underset{_{\left[-1,1\right]}}{\text{sat}}\left(\frac{-\big(b_{1}x\left(t\right)+c_{1}+B_{1}\lambda\left(t\right)\big)}{2a_{1}}\right), \label{EVOptimalInputQ1}
\\
&u_{q_{2}}^{o}\left(t\right)=\underset{_{\left[-1,1\right]}}{\text{sat}}\left(\frac{-x\left(t\right)\big(b_{2}x\left(t\right)+c_{2}+B_{2}\lambda\left(t\right)\big)}{2a_{2}}\right),
\\
&\hspace{-6pt}\begin{array}{c}
u_{q_{4}}^{o\left(1\right)}\left(t\right)=\underset{_{\left[-1,1\right]}}{\text{sat}}\left(\frac{-\left(x_{\left(t\right)}^{\left(1\right)}+R_{1}x_{\left(t\right)}^{\left(2\right)}\right)\left[b_{4}\left(x_{\left(t\right)}^{\left(1\right)}+R_{1}x_{\left(t\right)}^{\left(2\right)}\right)+c_{4}+B_{SM}^{4}\lambda_{\left(t\right)}^{\left(1\right)}+B_{RM}^{4}\lambda_{\left(t\right)}^{\left(2\right)}\right]}{2a_{4}}\right),\hfill
\\
u_{q_{4}}^{o\left(2\right)}\left(t\right)=\left\{ \begin{array}{ccc}
-1 & \text{if} & B_{SS}\lambda^{\left(1\right)}\left(t\right)-B_{RS}\lambda^{\left(2\right)}\left(t\right)\geq0\\
0 & \text{if} & B_{SS}\lambda^{\left(1\right)}\left(t\right)-B_{RS}\lambda^{\left(2\right)}\left(t\right)<0
\end{array}\right.\,,\hfill
\\
u_{q_{4}}^{o\left(3\right)}\left(t\right)=\left\{ \begin{array}{ccc}
-1 & \text{if} & B_{RR}\lambda^{\left(2\right)}\left(t\right)-B_{SR}\lambda^{\left(1\right)}\left(t\right)\geq0\\
0 & \text{if} & B_{RR}\lambda^{\left(2\right)}\left(t\right)-B_{SR}\lambda^{\left(1\right)}\left(t\right)<0
\end{array}\right.\,,\hfill
\end{array} 
\\
&u_{q_{6}}^{o}\left(t\right)=\underset{_{\left[-1,1\right]}}{\text{sat}}\left(\frac{-x\left(t\right)\big(b_{6}x\left(t\right)+c_{6}+B_{6}\lambda\left(t\right)\big)}{2a_{6}}\right),\hfill \label{EVOptimalInputQ6}
\end{align}
where $x\left(t\right) \equiv x^o_{q_i}\left(t\right)$ are processes governed by the set of differential equations
\begin{align}
\dot{x}_{q_{1}}&=-A_{1}\left(x_{q_{1}}\left(t\right)\right)^{2}+B_{1}u_{q_{1}}^{o}\left(t\right)-C_{1}x_{q_{1}}\left(t\right)-D_{1}\,, \label{ExTrajectoryDynamicsQ1}
\\
\dot{x}_{q_{2}}&=-A_{2}\left(x_{q_{2}}\left(t\right)\right)^{2}+B_{2}\frac{u_{q_{2}}^{o}\left(t\right)}{x_{q_{2}}\left(t\right)}-C_{2}x_{q_{2}}\left(t\right)-D_{2}\,,\label{ExTrajectoryDynamicsQ2}
\\
\dot{x}_{q_{4}}^{\left(1\right)}&=-A_{SS}x_{q_{4}}^{(1)}\left(t\right)+A_{SR}x_{q_{4}}^{(2)}\left(t\right)-A_{SA}\left(x_{q_{4}}^{(1)}\left(t\right)+R_{2}x_{q_{4}}^{(2)}\left(t\right)\right)^{2}\nonumber
\\
& \hspace{-15pt} \hfill+B_{SM}\frac{u_{q_{4}}^{o\left(1\right)}\left(t\right)}{x_{q_{4}}^{(1)}+R_{1}x_{q_{4}}^{(2)}}+B_{SS}u_{q_{4}}^{o\left(2\right)}\left(t\right)-B_{SR}u_{q_{4}}^{o\left(3\right)}\left(t\right)-D_{SL}\,, \hspace{-10pt}
\\
\dot{x}_{q_{4}}^{\left(2\right)}&=A_{RS}x_{q_{4}}^{(1)}\left(t\right)-A_{RR}x_{q_{4}}^{(2)}\left(t\right)-A_{RA}\left(x_{q_{4}}^{(1)}\left(t\right)+R_{2}x_{q_{4}}^{(2)}\left(t\right)\right)^{2}\nonumber
\\
& \hspace{-16pt} +B_{RM}\left(1+R_{1}\right)\frac{u_{q_{4}}^{o\left(1\right)}\left(t\right)}{x_{q_{4}}^{(1)}+R_{1}x_{q_{4}}^{(2)}}-B_{RS}u_{q_{4}}^{o\left(2\right)}\left(t\right)+B_{RR}u_{q_{4}}^{o\left(3\right)}\left(t\right)-D_{RL}\,,
\\
\dot{x}_{q_{6}}&=-A_{6}\left(x_{q_{6}}\left(t\right)\right)^{2}+B_{6}\frac{u_{q_{6}}^{o}\left(t\right)}{x_{q_{6}}^{o}\left(t\right)}-C_{6}x_{q_{6}}^{o}\left(t\right)-D_{6}\,,\label{ExTrajectoryDynamicsQ6}
\end{align}
subject to the initial and boundary conditions:
\begin{align}
&x_{q_1}\left(t_0\right) = 0 \,, \vphantom{f_{f_{f_{f_f}}}} \label{ExICq1}
\\
&x_{q_2}\left(t_{s_1}\right) =x_{q_1}\left(t_{s_1}-\right) \,,
\\
&x_{q_4}\left(t_{s_2}\right) \equiv \left[\begin{array}{c}
x^{(1)}_{q_4}\left(t_{s_2}\right)\\
x^{(2)}_{q_4}\left(t_{s_2}\right)
\end{array}\right] =\left[\begin{array}{c}
g_{tr}^1 x_{q_2}\left(t_{s_2}-\right)\\
0
\end{array}\right] \,,
\\
&x_{q_6}\left(t_{s_3}\right) =g_{tr}^2 \, x_{q_4}^{(2)}\left(t_{s_3}-\right) \,, \label{ExICq6}
\end{align}
where the switching manifold condition are satisfied at the autonomous switching instances $t_{s_1}$, $t_{s_3}$, i.e.
\begin{align}
x_{q_1}\left(t_{s_1}-\right) &= k_1 \,, \label{ExSwManifoldT1}
\\
x_{q_4}^{(1)}\left(t_{s_3}-\right) &= 0 \,. \label{ExSwManifoldT3}
\end{align}
and $\lambda\left(t\right) \equiv \lambda^o_{q_i}\left(t\right)$ are backward processes governed by the set of differential equations
\begin{align}
\dot{\lambda}_{q_{6}}&=\frac{2a_{6}\left(u_{q_{6}}^{o}\left(t\right)\right)^{2}}{\left(x_{q_{6}}\left(t\right)\right)^{3}}+\frac{c_{6}u_{q_{6}}^{o}\left(t\right)}{\left(x_{q_{6}}\left(t\right)\right)^{2}}-d_{6} \hfill\nonumber
\\
&{\hspace{35pt}}+\lambda_{q_{6}}\left(t\right)\left(2A_{6}x_{q_{6}}\left(t\right)+B_{6}\frac{u_{q_{6}}^{o}\left(t\right)}{\left(x_{q_{6}}\left(t\right)\right)^{2}}+C_{6}\right)\,,
\label{ExAdjointDynamicsQ6}
\\
\dot{\lambda}_{q_{4}}^{(1)}&=\frac{2a_{4}\left(u_{q_{4}}^{o\left(1\right)}\left(t\right)\right)^{2}}{\left(x^{(1)}+R_{1}x^{(2)}\right)^{3}}+\frac{c_{4}u_{q_{4}}^{o\left(1\right)}\left(t\right)}{\left(x^{(1)}+R_{1}x^{(2)}\right)^{2}}-d_{4}\hfill\nonumber
\\
+&\lambda_{q_{4}}^{\left(1\right)}\left(A_{SS}+2A_{SA}\left(x^{(1)}+R_{2}x^{(2)}\right)+\frac{B_{SM}\left(1+R_{1}\right)u_{q_{4}}^{o\left(1\right)}}{\left(x^{(1)}+R_{1}x^{(2)}\right)^{2}}\right)\nonumber
\\
+&\lambda_{q_{4}}^{\left(2\right)}\left(-A_{RS}+2A_{RA}\left(x^{(1)}+R_{2}x^{(2)}\right)+\frac{B_{RM}\left(1+R_{1}\right)u_{q_{4}}^{o\left(1\right)}}{\left(x^{(1)}+R_{1}x^{(2)}\right)^{2}}\right)\,,
\\
\dot{\lambda}_{q_{4}}^{(2)}&=\left(\frac{2R_{1}a_{4}\left(u_{q_{4}}^{o\left(1\right)}\left(t\right)\right)^{2}}{\left(x^{(1)}+R_{1}x^{(2)}\right)^{3}}+\frac{R_{1}c_{4}u_{q_{4}}^{o\left(1\right)}\left(t\right)}{\left(x^{(1)}+R_{1}x^{(2)}\right)^{2}}-R_{1}d_{4}\right)\hfill\nonumber
\\
+&\lambda_{q_{4}}^{\left(1\right)}\left(-A_{SR}+2R_{2}A_{SA}\left(x^{(1)}+R_{2}x^{(2)}\right)+\frac{R_{1}B_{SM}\left(1+R_{1}\right)u_{q_{4}}^{o\left(1\right)}}{\left(x^{(1)}+R_{1}x^{(2)}\right)^{2}}\right)\hspace{8mm}\nonumber
\\
+&\lambda_{q_{4}}^{\left(2\right)}\left(A_{RR}+2R_{2}A_{RA}\left(x^{(1)}+R_{2}x^{(2)}\right)+\frac{R_{1}B_{RM}\left(1+R_{1}\right)u_{q_{4}}^{o\left(1\right)}}{\left(x^{(1)}+R_{1}x^{(2)}\right)^{2}}\right)\,,
\\
\dot{\lambda}_{q_{2}}&=\frac{2a_{2}\left(u_{q_{2}}^{o}\left(t\right)\right)^{2}}{\left(x_{q_{2}}\left(t\right)\right)^{3}}+\frac{c_{2}u_{q_{2}}^{o}\left(t\right)}{\left(x_{q_{2}}\left(t\right)\right)^{2}}-d_{2}\nonumber
\\
&{\hspace{40pt}}+\lambda_{q_{2}}\left(t\right)\left(2A_{2}x_{q_{2}}\left(t\right)+B_{2}\frac{u_{q_{2}}^{o}\left(t\right)}{\left(x_{q_{2}}\left(t\right)\right)^{2}}+C_{2}\right),
\\
\dot{\lambda}_{q_{1}}&=-b_{1}u_{q_{1}}^{o}\left(t\right)-d_{1}+\lambda_{q_{1}}\left(t\right)\left(2A_{1}x_{q_{1}}\left(t\right)+C_{1}\right),
\label{ExAdjointDynamicsQ1}
\end{align}
subject to the terminal and boundary conditions:
\begin{align}
\lambda_{q_6}\left(t_{f}\right)&= d_1 + 2 d_2 x_{q_6}\left(t_{f}\right) \,, \hfill \label{evExLambdaQ6}
\\
\lambda_{q_{4}}\left(t_{s_{3}}\right)&=\left[\begin{array}{c}
0\\
g_{tr}^{2}
\end{array}\right]\lambda_{q_{6}}\left(t_{s_{3}}+\right)+p_{3}\left[\begin{array}{c}
1\\
0
\end{array}\right]\,,\label{evExLambdaQ4}
\\
\lambda_{q_{2}}\left(t_{s_{2}}\right)&=\left[\begin{array}{cc}
g_{tr}^{1} & 0\end{array}\right]\left[\begin{array}{c}
\lambda_{q_{4}}^{\left(1\right)}\left(t_{s_{2}}+\right)\\
\lambda_{q_{4}}^{\left(2\right)}\left(t_{s_{2}}+\right)
\end{array}\right]=g_{tr}^{1}\lambda_{q_{4}}^{\left(1\right)}\left(t_{s_{2}}+\right)\,,\hfill\label{evExLambdaQ2}
\\
\lambda_{q_1}\left(t_{s_{1}}\right) &= \lambda_{q_2}\left(t_{s_{1}}+\right)+p_{1} \,. \hfill \label{evExLambdaQ1}
\end{align}

\begin{figure}
\vspace{-20pt}
\includegraphics[width=1.05\columnwidth]{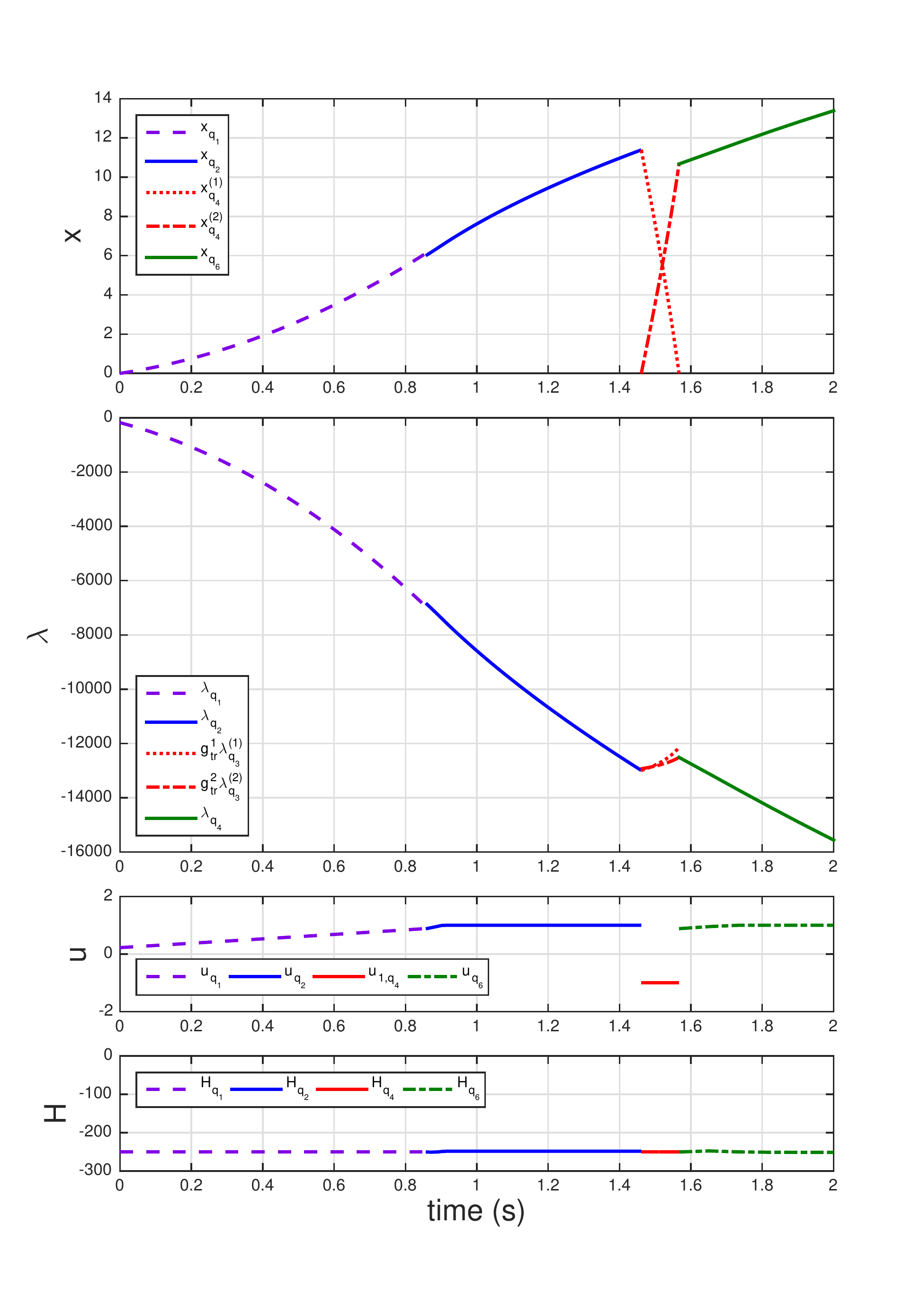}
\vspace*{-1.5cm}
\caption{Optimal state and adjoint processes, optimal input and the corresponding Hamiltonians for the example of vehicle with transmission.}
\label{Fig:AccelerationEnergyOptimal}
\end{figure}

\begin{figure}
\begin{center}
\includegraphics[width=.95\columnwidth]{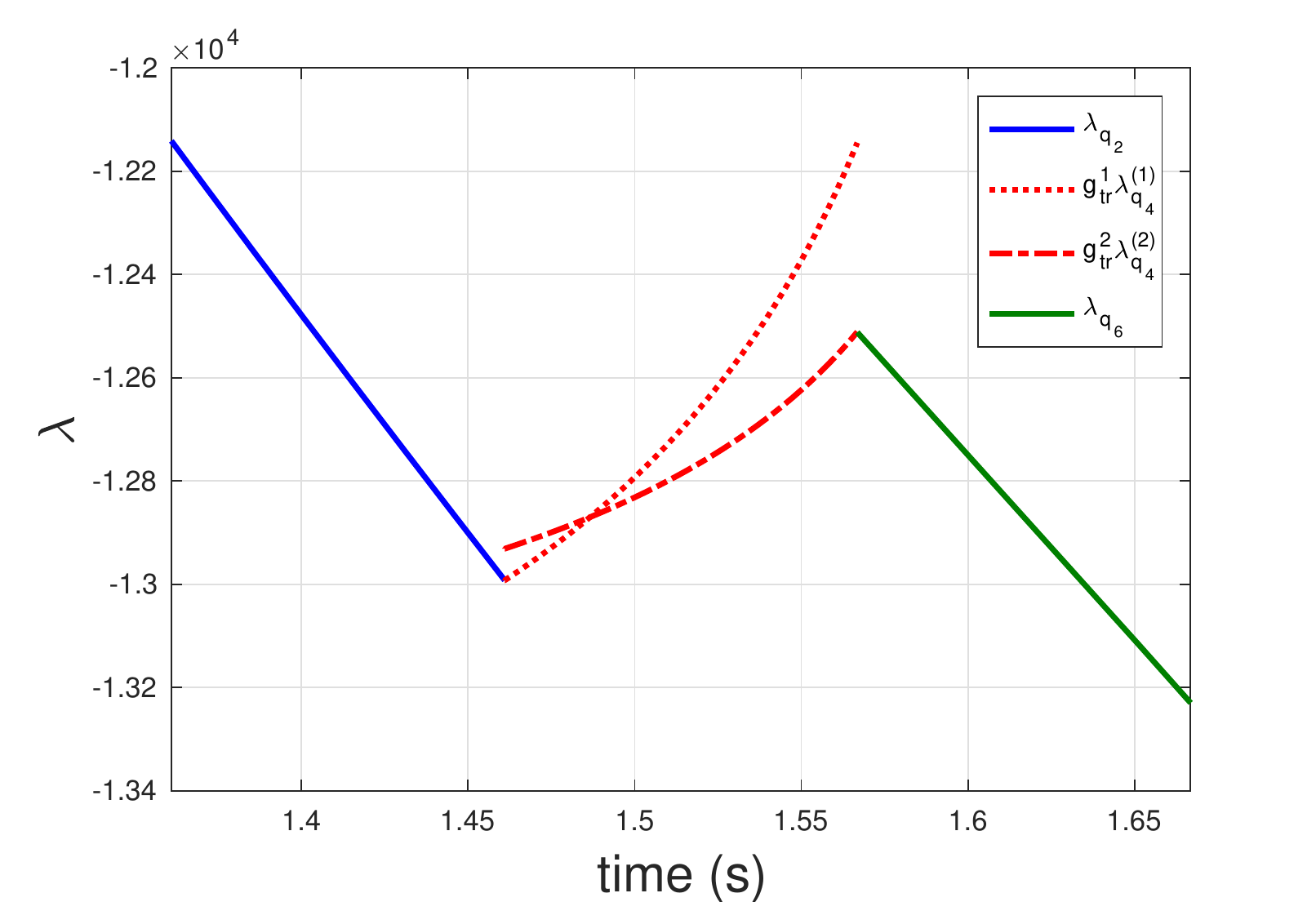}
\hspace{-15pt}
\vspace{-8pt}
\caption{Satisfaction of adjoint boundary conditions \eqref{evExLambdaQ4} and \eqref{evExLambdaQ2} which are accompanied by dimension-changes.}
\label{Fig:LambdaBC}
\end{center}
\vspace{-8pt}
\end{figure}

and the Hamiltonian boundary conditions at the optimal switching instances $t_{s_1}$, $t_{s_2}$, $t_{s_3}$, i.e.
\begin{multline}
\lambda_{q_{1}}\left(t_{s_{1}}^{-}\right)\left(-A_{1}\left(x_{q_{1}}\left(t_{s_{1}}^{-}\right)\right)^{2}+B_{1}u_{q_{1}}^{o}\left(t_{s_{1}}^{-}\right)-C_{1}x_{q_{1}}\left(t_{s_{1}}^{-}\right)-D_{1}\right)
\\
+a_{1}\left(u_{q_{1}}^{o}\left(t_{s_{1}}^{-}\right)\right)^{2}+b_{1}x_{q_{1}}\left(t_{s_{1}}^{-}\right)u_{q_{1}}^{o}\left(t_{s_{1}}^{-}\right)+c_{1}u_{q_{1}}^{o}\left(t_{s_{1}}^{-}\right)+d_{1}x_{q_{1}}\left(t_{s_{1}}^{-}\right)
\\
=\lambda_{q_{2}}\left(t_{s_{1}}^{+}\right)\left(-A_{2}\left(x_{q_{2}}\left(t_{s_{1}}^{+}\right)\right)^{2}+B_{2}\frac{u_{q_{2}}^{o}\left(t_{s_{1}}^{+}\right)}{x_{q_{2}}\left(t_{s_{1}}^{+}\right)}-C_{2}x_{q_{2}}\left(t_{s_{1}}^{+}\right)-D_{2}\right)
\\
+a_{2}\left(\frac{u_{q_{2}}^{o}\left(t_{s_{1}}^{+}\right)}{x_{q_{2}}\left(t_{s_{1}}^{+}\right)}\right)^{2}+b_{2}u_{q_{2}}^{o}\left(t_{s_{1}}^{+}\right)+c_{2}\frac{u_{q_{2}}^{o}\left(t_{s_{1}}^{+}\right)}{x_{q_{2}}\left(t_{s_{1}}^{+}\right)}+d_{2}x_{q_{2}}\left(t_{s_{1}}^{+}\right),  \label{ExHamiltonianT1}
\end{multline}
\begin{multline}
\lambda_{q_{2}}\left(t_{s_{2}}^{-}\right)\left(-A_{2}\left(x_{q_{2}}\left(t_{s_{2}}^{-}\right)\right)^{2}+B_{2}\frac{u_{q_{2}}^{o}\left(t_{s_{2}}^{-}\right)}{x_{q_{2}}\left(t_{s_{2}}^{-}\right)}-C_{2}x_{q_{2}}\left(t_{s_{2}}^{-}\right)-D_{2}\right)\hfill
\\
+a_{2}\left(\frac{u_{q_{2}}^{o}\left(t_{s_{2}}^{-}\right)}{x_{q_{2}}\left(t_{s_{2}}^{-}\right)}\right)^{2}+b_{2}u_{q_{2}}^{o}\left(t_{s_{2}}^{-}\right)+c_{2}\frac{u_{q_{2}}^{o}\left(t_{s_{2}}^{-}\right)}{x_{q_{2}}\left(t_{s_{2}}^{-}\right)}+d_{2}x_{q_{2}}\left(t_{s_{2}}^{-}\right),
\\
=-\lambda_{q_{4}}^{(1)}\left(t_{s_{2}}^{+}\right)\bigg(A_{SS}x_{q_{4}}^{(1)}\left(t_{s_{2}}^{+}\right)+A_{SA}\left(x_{q_{4}}^{(1)}\left(t_{s_{2}}^{+}\right)\right)^{2} \hfill
\\
+B_{SM}^{4}\frac{u_{q_{4}}^{o(1)}\left(t_{s_{2}}^{+}\right)}{x_{q_{4}}^{(1)}\left(t_{s_{2}}^{+}\right)}+B_{SS}^{4}u_{q_{4}}^{o(2)}\left(t_{s_{2}}^{+}\right)-B_{SR}^{4}u_{q_{4}}^{o(3)}\left(t_{s_{2}}^{+}\right)-D_{SL}\bigg)
\\
+\lambda_{q_{4}}^{(2)}\left(t_{s_{2}}^{+}\right)\bigg(A_{RS}x_{q_{4}}^{(1)}\left(t_{s_{2}}^{+}\right)-A_{RA}\left(x_{q_{4}}^{(1)}\left(t_{s_{2}}^{+}\right)\right)^{2}
\\
+B_{RM}^{4}\frac{u_{q_{4}}^{o(1)}\left(t_{s_{2}}^{+}\right)}{x_{q_{4}}^{(1)}\left(t_{s_{2}}^{+}\right)}-B_{RS}^{4}u_{q_{4}}^{o(2)}\left(t_{s_{2}}^{+}\right)+B_{RR}^{4}u_{q_{4}}^{o(3)}\left(t_{s_{2}}^{+}\right)-D_{RL}\bigg)
\\
+a_{4}\frac{\left(u_{q_{4}}^{o(1)}\left(t_{s_{2}}^{+}\right)\right)^{2}}{\left(x_{q_{4}}^{(1)}\left(t_{s_{2}}^{+}\right)\right)^{2}}+b_{4}u_{q_{4}}^{o(1)}\left(t_{s_{2}}^{+}\right)+c_{4}\frac{u_{q_{4}}^{o(1)}\left(t_{s_{2}}^{+}\right)}{x_{q_{4}}^{(1)}\left(t_{s_{2}}^{+}\right)}+d_{4}\left(x_{q_{4}}^{(1)}\left(t_{s_{2}}^{+}\right)\right) \,,
\end{multline}
\begin{multline}
\lambda_{q_{4}}^{(1)}\left(t_{s_{3}}^{-}\right)\bigg(A_{SR}x_{q_{4}}^{(2)}\left(t_{s_{3}}^{-}\right)-A_{SA}\left(R_{2}x_{q_{4}}^{(2)}\left(t_{s_{3}}^{-}\right)\right)^{2}\\+B_{SM}^{4}\frac{u_{q_{4}}^{o\left(1\right)}\left(t_{s_{3}}^{-}\right)}{R_{1}x_{q_{4}}^{(2)}\left(t_{s_{3}}^{-}\right)}+B_{SS}^{4}u_{q_{4}}^{o\left(2\right)}\left(t_{s_{3}}^{-}\right)-B_{SR}^{4}u_{q_{4}}^{o\left(3\right)}\left(t_{s_{3}}^{-}\right)-D_{SL}\bigg)\\-\lambda_{q_{4}}^{\left(2\right)}\left(t_{s_{3}}^{-}\right)\bigg(A_{RR}x_{q_{4}}^{(2)}\left(t_{s_{3}}^{-}\right)+A_{RA}\left(R_{2}x_{q_{4}}^{(2)}\left(t_{s_{3}}^{-}\right)\right)^{2}\\+B_{RM}^{4}\frac{u_{q_{4}}^{o\left(1\right)}\left(t_{s_{3}}^{-}\right)}{R_{1}x_{q_{4}}^{(2)}}-B_{RS}^{4}u_{q_{4}}^{o\left(2\right)}\left(t_{s_{3}}^{-}\right)+B_{RR}^{4}u_{q_{4}}^{o\left(3\right)}\left(t_{s_{3}}^{-}\right)-D_{RL}\bigg)\\+a_{4}\frac{\left(u_{q_{4}}^{o\left(1\right)}\left(t_{s_{3}}^{-}\right)\right)^{2}}{\left(R_{1}x_{q_{4}}^{(2)}\left(t_{s_{3}}^{-}\right)\right)^{2}}+b_{4}u_{q_{4}}^{o\left(1\right)}\left(t_{s_{3}}^{-}\right)+c_{4}\frac{u_{q_{4}}^{o\left(1\right)}\left(t_{s_{3}}^{-}\right)}{R_{1}x_{q_{4}}^{(2)}\left(t_{s_{3}}^{-}\right)}+d_{4}R_{1}x_{q_{4}}^{(2)}\left(t_{s_{3}}^{-}\right)
\\
=\lambda_{q_{6}}\left(t_{s_{3}}^{+}\right)\left(-A_{6}\left(x_{q_{6}}\left(t_{s_{3}}^{+}\right)\right)^{2}+B_{6}\frac{u_{q_{6}}^{o}\left(t_{s_{3}}^{+}\right)}{x_{q_{6}}\left(t_{s_{3}}^{+}\right)}-C_{6}x_{q_{6}}\left(t_{s_{3}}^{+}\right)-D_{6}\right)
\\
\hspace{-12pt} +a_{6}\left(\frac{u_{q_{6}}^{o}\left(t_{s_{3}}^{+}\right)}{x_{q_{6}}\left(t_{s_{3}}^{+}\right)}\right)^{2}+b_{6}u_{q_{6}}^{o}\left(t_{s_{3}}^{+}\right)+c_{6}\frac{u_{q_{6}}^{o}\left(t_{s_{3}}^{+}\right)}{x_{q_{6}}\left(t_{s_{3}}^{+}\right)}+d_{6}x_{q_{6}}\left(t_{s_{3}}^{+}\right)\,. \label{ExHamiltonianT3} \hspace{-5pt}
\end{multline}

For the 10 (scalar) ordinary differential equations \eqref{ExTrajectoryDynamicsQ1}--\eqref{ExTrajectoryDynamicsQ6}, \eqref{ExAdjointDynamicsQ6}--\eqref{ExAdjointDynamicsQ1}, the 3 a priori unknown switching instances $t_{s_1}$, $t_{s_2}$, $t_{s_3}$ and the 2 auxiliary introduced unknowns $p_1$, $p_3$ there are 15 equations provided by \eqref{ExICq1}--\eqref{ExICq6}, \eqref{evExLambdaQ6}--\eqref{evExLambdaQ1}, \eqref{ExHamiltonianT1}--\eqref{ExHamiltonianT3} in the form of initial, boundary and terminal conditions. 
It is not difficult to show that for the parameter values in \cite{APPEC2017NAHS, APPEC1ADHS2015, MechMachThry2015, USPatent2017}, the necessary optimality conditions of the HMP in the form of the above set of multiple-point boundary value differential equations uniquely identify optimal inputs and the corresponding optimal trajectories. The results are illustrated in Figure \ref{Fig:AccelerationEnergyOptimal}. In order to illustrate the satisfaction of the adjoint boundary conditions  \eqref{evExLambdaQ4} and \eqref{evExLambdaQ2}, the components $\lambda^{(1)}$ and $\lambda^{(2)}$ of the adjoint process in $t \in \left[t_{s_2},t_{s_3}\right]$ are multiplied  by $g_{tr}^1$ and $g_{tr}^2$ respectively, and are in-zoomed in Figure \ref{Fig:LambdaBC}. 
A more detailed derivation is provided in the Appendix. Interested readers are referred to \cite{APPEC2017NAHS} for more details and discussion on the results.




\bibliographystyle{IEEEtran}
\bibliography{AliPakniyatThesis}

\begin{appendices}

\renewcommand{\theequation}{\Alph{section}.\arabic{equation}}

\section{Derivation of the multiple-point boundary value differential equations}
\label{sec:Appendix}

In order to find the infimum of the hybrid cost \eqref{TotalEnergyEV} subject to the dynamics \eqref{q1Vehicle}--\eqref{q6Vehicle}, jump maps \eqref{EVstateJumpMapFirst}--\eqref{EVstateJumpMapLast}, and switching manifolds \eqref{EVswitchingManifoldFirst}--\eqref{EVswitchingManifoldLast} the following steps are taken.
\subsection{Formation of the Hamiltonians:}
The  family of system Hamiltonians are formed as
\begin{align}
&H_{q_1}\left(x,\lambda,u\right)= \lambda\left(-A_1 x^2 + B_1 u -C_1 x - D_1 \right) \label{EVHamiltonianQ1}
\nonumber\\
& \hspace{100pt} +a_{1}u^{2}+b_{1}xu+c_{1}u+d_{1}x \,,
\\
&H_{q_2}\left(x,\lambda,u\right)= \lambda\left(-A_2 x^2 + B_2 \frac{u}{x} - C_2 x - D_2\right)
\nonumber\\
& \hspace{100pt} +a_{2}\frac{u^{2}}{x^{2}}+b_{2}u+c_{2}\frac{u}{x}+d_{2}x \,,
\\
&H_{q_{4}}\left(x,\lambda,u\right)=\lambda^{\left(1\right)}\bigg(-A_{SS}x^{(1)}+A_{SR}x^{(2)}-A_{SA}\left(x^{(1)}+R_{2}x^{(2)}\right)^{2}\nonumber\\&\hspace{45pt}+B_{SM}^{4}\frac{u^{\left(1\right)}}{x^{(1)}+R_{1}x^{(2)}}+B_{SS}^{4}u^{\left(2\right)}-B_{SR}^{4}u^{\left(3\right)}-D_{SL}\bigg)\nonumber\\&\hspace{47pt}+\lambda^{\left(2\right)}\bigg(A_{RS}x^{(1)}-A_{RR}x^{(2)}-A_{RA}\left(x^{(1)}+R_{2}x^{(2)}\right)^{2}\nonumber\\&\hspace{47pt}+B_{RM}^{4}\frac{u^{\left(1\right)}}{x^{(1)}+R_{1}x^{(2)}}-B_{RS}^{4}u^{\left(2\right)}+B_{RR}^{4}u^{\left(3\right)}-D_{RL}\bigg)\nonumber\\&\hspace{47pt}+a_{4}\frac{\left(u^{\left(1\right)}\right)^{2}}{\left(x^{(1)}+R_{1}x^{(2)}\right)^{2}}+b_{4}u^{\left(1\right)}\nonumber\\&\hspace{47pt}+c_{4}\frac{u^{\left(1\right)}}{x^{(1)}+R_{1}x^{(2)}}+d_{4}\left(x^{(1)}+R_{1}x^{(2)}\right)\,,
\\
&H_{q_6}\left(x,\lambda,u\right)=\lambda\left(-A_6 x^2 + B_6 \frac{u}{x} - C_6 x - D_6\right)
\nonumber\\
& \hspace{100pt} +a_{6}\frac{u^{2}}{x^{2}}+b_{6}u+c_{6}\frac{u}{x}+d_{6}x \,. \label{EVHamiltonianQ6}
\end{align}

\subsection{Hamiltonian Minimization:}
The Hamiltonian minimization condition \eqref{HminWRTu} for the Hamiltonians \eqref{EVHamiltonianQ1}--\eqref{EVHamiltonianQ6} result in \eqref{EVOptimalInputQ1}--\eqref{EVOptimalInputQ6}.

\subsection{Continuous State Evolution:}
Taking the partial derivative of the Hamiltonians \eqref{EVHamiltonianQ1}--\eqref{EVHamiltonianQ6} with respect to $\lambda$, the continuous state dynamics \eqref{StateDynamics} are derived as \eqref{ExTrajectoryDynamicsQ1}--\eqref{ExTrajectoryDynamicsQ6}. These ODEs are subject to the initial and boundary conditions \eqref{StateIC} and \eqref{StateBC}, which for this problem are explicitly expressed in \eqref{ExICq1}--\eqref{ExICq6}.
By problem definition, the switchings from $q_1$ to $q_2$ and from $q_4$ to $q_6$ are autonomous, and therefore subject to the switching manifold conditions \eqref{ExSwManifoldT1} and \eqref{ExSwManifoldT3}.

\subsection{Evolution of the Adjoint Process:}
Taking the partial derivative of the Hamiltonians \eqref{EVHamiltonianQ1}--\eqref{EVHamiltonianQ6} with respect to $\lambda$, the adjoint process dynamics \eqref{lambda dynamics} are derived as \eqref{ExAdjointDynamicsQ6}--\eqref{ExAdjointDynamicsQ1}, which are 
subject to the terminal and boundary conditions \eqref{LambdaTerminal} and \eqref{LambdaBC}, which in this example are written as \eqref{evExLambdaQ6}--\eqref{evExLambdaQ1}.

\subsection{Boundary Conditions on Hamiltonians:}
Furthermore, the Hamiltonian continuity at switching instants \eqref{Hamiltonian jump} are expressed as
\begin{align}
H_{q_4}\left(x,\lambda,u\right)_{\left(t_{s_{3}}-\right)} &=H_{q_6}\left(x,\lambda,u\right)_{\left(t_{s_{3}}+\right)} \,,
\\
H_{q_2}\left(x,\lambda,u\right)_{\left(t_{s_{2}}-\right)} &=H_{q_4}\left(x,\lambda,u\right)_{\left(t_{s_{2}}+\right)} \,,
\\
H_{q_1}\left(x,\lambda,u\right)_{\left(t_{s_{1}}-\right)} &=H_{q_2}\left(x,\lambda,u\right)_{\left(t_{s_{1}}+\right)} \,,
\end{align}
which, with the evaluation of the Hamiltonians \eqref{EVHamiltonianQ1}--\eqref{EVHamiltonianQ6} at the switching instants, result in \eqref{ExHamiltonianT1}--\eqref{ExHamiltonianT3}.

\end{appendices}

\raggedbottom

\pagebreak

\balance

\begin{IEEEbiography}[{\includegraphics[width=1in,height=1.25in,clip,keepaspectratio]{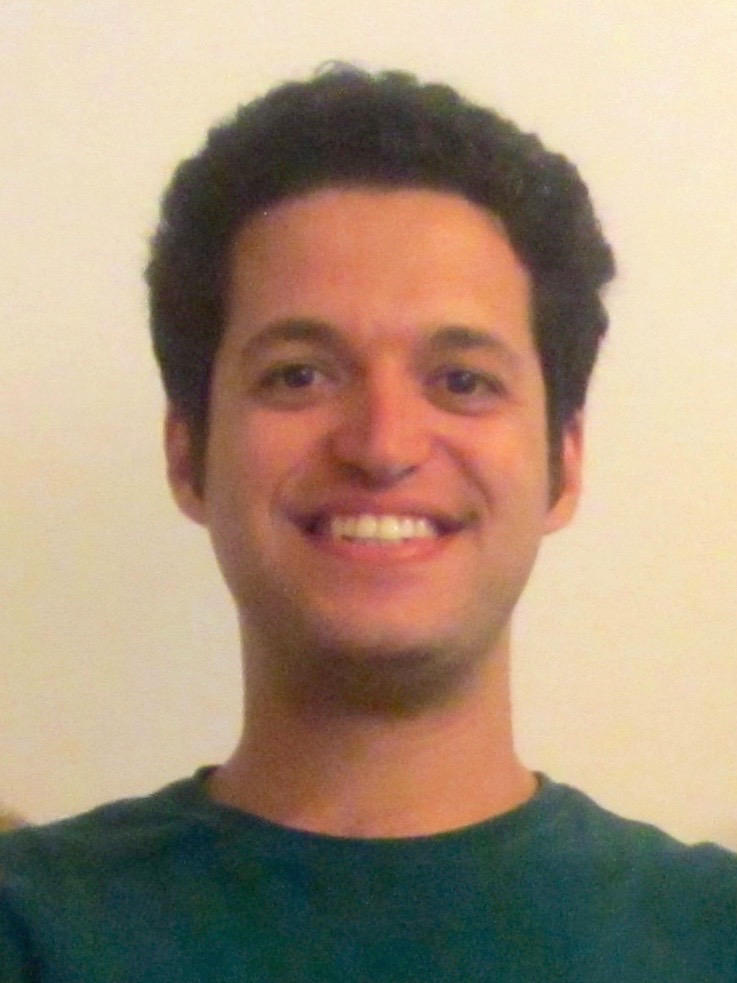}}]%
{Ali Pakniyat}
 received the B.Sc. degree in mechanical engineering from Shiraz University, Shiraz, Iran, in 2008, and the M.Sc. degree in mechanical engineering (applied mechanics and design) from Sharif University of Technology, Tehran, Iran, in 2010, and the Ph.D. degree in electrical engineering from McGill University, Montreal, QC, Canada, in September 2016, under the supervision of P. E. Caines.
He is currently a postdoctoral research fellow in the Department of Mechanical Engineering, University of Michigan, Ann Arbor, USA. 

He is a member of Robotics and Optimization for the Analysis of Human Motion (ROAHM) Lab and a former member of McGill Centre for Intelligent Machines (CIM) and Groupe d'\'Etudes et de Recherche en Analyse des D\'ecisions (GERAD). He serves as the 2018 Chair of the IEEE Southeast Michigan (SEM) Chapter 12: Control Systems Society.
His research interests include \text{deterministic} and stochastic optimal control, \text{nonlinear} and hybrid systems, analytical mechanics and chaos, with applications in the  automotive field, mathematical finance, sensors and actuators, and robotics.
\end{IEEEbiography}
\begin{IEEEbiography}[{\includegraphics[width=1in,height=1.25in,clip,keepaspectratio]{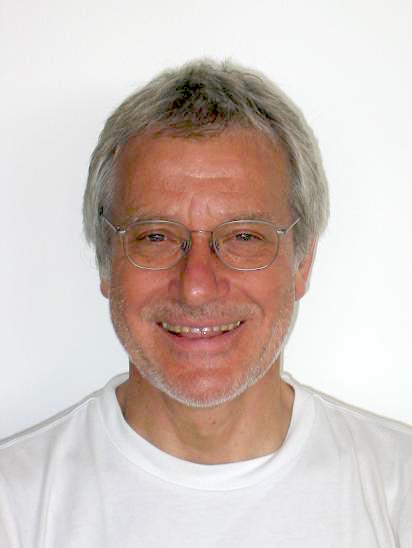}}]%
{Peter E. Caines}
received the BA in mathematics from Oxford University in 1967 and the PhD in systems and control theory in 1970 from Imperial College, University of London, under the supervision of David Q. Mayne, FRS. After periods as a postdoctoral researcher and faculty member at UMIST, Stanford, UC Berkeley, Toronto and Harvard, he joined McGill University, Montreal, in 1980, where he is James McGill Professor and Macdonald Chair in the Department of Electrical and Computer Engineering. In 2000 the adaptive control paper he coauthored with G. C. Goodwin and P. J. Ramadge (IEEE Transactions on Automatic Control, 1980) was recognized by the IEEE Control Systems Society as one of the 25 seminal control theory papers of the 20th century. He is a Life Fellow of the IEEE, and a Fellow of SIAM, the Institute of Mathematics and its Applications (UK) and the Canadian Institute for Advanced Research and is a member of Professional Engineers Ontario. He was elected to the Royal Society of Canada in 2003. In 2009 he received the IEEE Control Systems Society Bode Lecture Prize and in 2012 a Queen Elizabeth II Diamond Jubilee Medal. Peter Caines is the author of Linear Stochastic Systems, John Wiley, 1988, which is to be republished in 2018 as a SIAM Classic, and is a Senior Editor of Nonlinear Analysis – Hybrid Systems; his research interests include stochastic, mean field game, decentralized and hybrid systems theory,  together with their applications in a range of fields.
\end{IEEEbiography}

\end{document}